\newif\ifrs
\ifrs \usepackage{mathrsfs} \fi  
\newtheorem{theorem}{Theorem}[section]
\newtheorem{lemma}[theorem]{Lemma}
\newtheorem{proposition}[theorem]{Proposition}
\newtheorem{remark}[theorem]{Remark}
\numberwithin{equation}{section}
\newtheorem{theorem*}{Theorem}
\newtheorem{ass*}[theorem*]{Assumption}
\newtheorem{note*}[theorem*]{Note}
\newtheorem{lemma*}[theorem*]{Lemma}
\newtheorem{definition*}[theorem*]{Definition}
\newtheorem{proposition*}[theorem*]{Proposition}
\newtheorem{corollary*}[theorem*]{Corollary}
\newtheorem{remark*}[theorem*]{Remark}
\newtheorem{example*}[theorem*]{Example}
\numberwithin{equation}{section}
\newcommand{\ol}{\overline}
\tikzset{
  intnode/.style={circle,draw,inner sep=1.5pt},      
  leafnode/.style={rectangle,draw,inner sep=1.5pt},  
  freenode/.style={circle,draw,double,inner sep=1.5pt}, 
  psiEdge/.style={thick},         
  deltaEdge/.style={dashed},       
  freeEdge/.style={thick},         
  fixedEdge/.style={thin},         
  dedEdge/.style={thick,densely dashed}
}
\def\bd{\begin{description}}
                \def\ed{\end{description}}
\def\D2{\bbD_{2,\infty-}}
\def\D{{\bf D}}
\def\cala{{\cal A}}
\def\calb{{\cal B}}
\def\calc{{\cal C}}
\def\calf{{\cal F}}
\def\calg{{\cal G}}
\def\calh{{\cal H}}
\def\call{{\cal L}}
\def\caln{{\cal N}}
\def\calp{{\cal P}}
\def\calt{{\cal T}}
\def\calx{{\cal X}}
\def\sfp{{\sf p}}
\def\halflineskip{\vspace*{3mm}}
\def\be{\begin{equation}}
                \def\ee{\end{equation}}
\def\bea{\begin{eqnarray}}
                \def\eea{\end{eqnarray}}
\def\beas{\begin{eqnarray*}}
                \def\eeas{\end{eqnarray*}}
\def\bi{\begin{itemize}}
                \def\ei{\end{itemize}}
\def\bd{\begin{description}}
                \def\ed{\end{description}}
\def\pt{\partial_\theta}
\def\wit{\widetilde}
\def\wih{\widehat}
\DeclareMathOperator*{\essinf}{ess\,inf}
\def\ptaui{\partial_{\tau_i}}
\newcommand{\bbC}{{\mathbb C}}
\newcommand{\bbD}{{\mathbb D}}
\newcommand{\bbH}{{\mathbb H}}
\newcommand{\bbL}{{\mathbb L}}
\newcommand{\bbR}{{\mathbb R}}
\newcommand{\bbY}{{\mathbb Y}}
\newcommand{\bbZ}{{\mathbb Z}}
\newcommand{\indep}{
        \begin{picture}(12,12)(0,0)
                \put(1,0){\line(1,0){8}}
                \put(4,0){\line(0,1){8}}
                \put(6,0){\line(0,1){8}}
        \end{picture}
}
\title{Statistical inference for highly correlated stationary point processes and noisy bivariate Neyman-Scott processes
\footnote{
  This work was in part supported by Japan Science and Technology Agency CREST JPMJCR2115; Japan Society for the Promotion of Science Grants-in-Aid for Scientific Research Nos. 17H01702, 23H03354 (Scientific Research); Grant-in-Aid for JSPS Fellows (25KJ0933); World-leading Innovative Graduate Study for Frontiers of Mathematical Sciences and Physics; and by a Cooperative Research Program of the Institute of Statistical Mathematics.
  Sincere gratitude goes to Mr. Yoshito Date (EMC Healthcare Co., Ltd.), Mr. Naokatsu Hasegawa (Yayoi Kogyo Co., Ltd.), and Mr. Masaki Nonaka (TAUNS Laboratories, Inc.) for their valuable support.
}
}
\author[1,2]{Takaaki Shiotani}
\author[1,2,3]{Nakahiro Yoshida}
\affil[1]{Graduate School of Mathematical Sciences, University of Tokyo
\footnote{Graduate School of Mathematical Sciences, University of Tokyo: 3-8-1 Komaba, Meguro-ku, Tokyo 153-8914, Japan. e-mail: nakahiro@ms.u-tokyo.ac.jp}
        }
\affil[2]{Japan Science and Technology Agency CREST
       }
\affil[3]{The Institute of Statistical Mathematics
       }
\begin{document}
\maketitle

\begin{abstract}
  Motivated by estimating the lead-lag relationships in high-frequency financial data,
  we propose noisy bivariate Neyman-Scott point processes with gamma kernels (NBNSP-G).
  NBNSP-G tolerates noises that are not necessarily Poissonian and has an intuitive interpretation. Our experiments suggest that NBNSP-G can explain the correlation of orders of two stocks well.
  A composite-type quasi-likelihood is employed to estimate the parameters of the model. However, when one tries to prove consistency and asymptotic normality, NBNSP-G breaks the boundedness assumption on the moment density functions commonly assumed in the literature. Therefore, under more relaxed conditions, we show consistency and asymptotic normality for bivariate point process models, which include NBNSP-G. Our numerical simulations also show that the estimator is indeed likely to converge. \\
  {\flushleft{{\bf Keywords:} Point process, Neyman-Scott process, composite likelihood, lead-lag effect, high-frequency data} }
\end{abstract}

\section{Introduction}
Estimating lead-lag relationships between multiple financial assets is a fundamental problem in financial engineering.
In recent years, statistical methods for lead-lag estimation have evolved as high-frequency data at the millisecond and microsecond levels have become available.
Among these, many studies focus on price movements.
For example, Hoffmann et al. \cite{hoffmann2013estimation} have introduced a continuous-time price process model. They use the Hayashi-Yoshida (HY) covariance estimator~\cite{hayashi2005covariance}, which can deal with asynchronous observations, and investigate the asymptotic properties of their method.
Huth \& Abergel~\cite{huth2014high} proposed another empirical method based on the HY covariance estimator.
However, Hayashi~\cite{hayashi2017statistical} pointed out that such methods based on price movements may produce unstable results, possibly due to the influence of the microstructure noise.
Instead, Hayashi~\cite{hayashi2017statistical} used Dobrev \& Schaumburg (DS) estimator~\cite{dobrev2017high}, which only focuses on the correlation of order timestamps, to estimate lead-lag relationships between the same stocks in multiple markets. Although Dobrev \& Schaumburg~\cite{dobreva2023high} discuss some asymptotic properties of the contrast function for the DS estimator under the independence null, its statistical properties, like consistency and asymptotic normality of the estimator itself, are still unclear.
To ensure statistical adequacy, employing some parametric point process model is a natural deal.
Da Fonseca \& Zaatour~\cite{da2017correlation} utilized multivariate Hawkes point processes with exponential kernels to model the lead-lag relationship between futures and stocks. However, their model requires specifying the whole process by the parametric structure, and using single exponential kernels may be restrictive.
Therefore, our goal is to develop a point process model of order timestamp sequences of multiple financial assets, especially useful for lead-lag estimation.

To this end, we propose a tailored version of the Neyman-Scott process (NSP).
The NSP has been used to model various phenomena, starting with Neyman \& Scott~\cite{neyman1958statistical}, and studied extensively.
This model has a structure that each point of the latent ``parent'' Poisson process produces independent ``children'' according to some distribution.
The multivariate version of NSP with causal kernels seems suitable for modeling order sequences of related financial assets
because we may interpret the parent process as the arrival of information common with multiple assets, and the children as actual order sequences triggered by each parent point.
Recently, Hong \& Shelton~\cite{hong2022deep} used multivariate and layered versions of Neyman-Scott processes equipped with gamma kernels to model a series of occurrences of multiple types of events, such as Twitter retweets from different groups of users, earthquakes of several magnitudes, and homicides in different districts.
We could use the original model and estimation procedure in  Hong \& Shelton~\cite{hong2022deep} directly, but there is some room for improvement, especially when it comes to high-frequency financial data.
We will discuss several points below.
In the following, we restrict ourselves to the one-layer and bivariate version of their model for simplicity.
\begin{enumerate}
  \item \textbf{Choice of kernels}.
        We employ gamma kernels as Hong \& Shelton~\cite{hong2022deep} did.
        In recent years, high-frequency trading has been prominently present in the market, and it may react very quickly to trading opportunities.
        Thus, we express those rapid responses by the (possibly) diverging gamma kernels.
        Indeed, our experiments using actual stock order data in Section 8 show that the gamma kernel fits better than exponential kernels, which one may consider first as a kind of causal kernel.
  \item \textbf{Adding noise}.
        Financial data are generally noisy, so it seems too optimistic to assume that we can explain all the orders by only NSP.
        In order to express this character, we extend bivariate NSP by adding stationary independent noise to each component as follows:
        \begin{equation}\label{eq_rep_our_model}
          N_i = \sum_{c\in C}\sum_{j=1}^{M_i(c)}\delta_{c+d_i(c, j)} + N^B_i, \quad i=1, 2,
        \end{equation}
        where $\calc$ is the parent homogeneous Poisson process, which is common for both components,
        $M_i(c)$ is the random number of children generated by a parent point $c\in\calc$,
        $d_i(c, j)\sim \mathrm{Gamma}(\alpha_i, l_i)$ is the duration between a parent point and an offspring point,
        $N_i^B$ is a stationary noise process not necessarily Poissonian,
        and all variables are generated independently.
        Here we are interested in estimating the parameters of gamma kernels $(\alpha_i, l_i), i=1, 2$
        and not interested in the structure of noise processes.
        We also emphasize that by adding noises, our model is more flexible than fully parametric models such as Hawkes processes~\cite{da2017correlation}.
  \item \textbf{Estimation procedure}.
        We use a quasi-maximum likelihood estimation (QMLE) to estimate the parameters of our model, which is different from Hong \& Shelton~\cite{hong2022deep}.
        Hong \& Shelton~\cite{hong2022deep} developed a likelihood-based estimation procedure.
        However, our model includes noise whose structure is unknown, making the likelihood function non-explicit. Thus, we cannot apply their procedure directly.
        The minimum contrast estimation using the cross-K function~\cite{zhu2022minimum} could be used,
        but methods based on the K-function are criticized for their dependency on the choice of hyperparameters (Guan~\cite{Guan2006}).
        To address this problem, quasi-likelihood-based estimation methods such as composite likelihood~\cite{Guan2006,Waage2007} and Palm likelihood~\cite{Tanaka2008,Palm2013} have been extensively studied.
        For multivariate cases, Jalilian et al. \cite{jalilian2015multivariate} used a second-order composite likelihood to another specific model.
        Therefore, we adopt a composite-type quasi-likelihood (see Section \ref{sec_general_theory} for the definition of our objective function and estimator).
\end{enumerate}
Consequently, we extend bivariate NSP with gamma kernels by adding unknown noises and estimate its parameters by using the quasi-maximum likelihood estimation (QMLE). In the following, we will sometimes abbreviate our model as NBNSP-G (noisy bivariate Neyman-Scott process with gamma kernels).

As expected, our experiments in Section \ref{sec_realdata} suggest that NBNSP-G represents the correlation between the order timestamp series of two stocks better than the Hawkes process with exponential kernels.
However, the use of gamma kernels that possibly diverge at the origin results in the divergence of the cross-correlation function, that is, the two point processes are highly correlated.
Then, our model breaks the boundedness assumption of the moment density function for the asymptotic properties of the estimator, as commonly assumed in the literature (e.g.~\cite{Guan2006,Palm2013}).
Thus, we relax the boundedness assumptions of the moment density function and prove consistency and asymptotic normality for general bivariate point process models, and then
we verify that our model indeed satisfies these relaxed assumptions.
In other words, we come to be able to treat highly correlated point process modes such as NBNSP-G.
Moreover, we conduct numerical simulations and show that our parameter estimation method works well under various settings.

This paper is organized as follows.
In Section \ref{sec_preliminaries}, we summarize the basic concepts of point processes and tools necessary for the asymptotic theory.
In Section \ref{sec_general_theory}, we construct the estimator for the parameters of the model.
In Section \ref{sec_general_theory2}, we provide the asymptotic theory for general bivariate point process models.
In Section \ref{sec_NS_model}, we define the noisy BNSP model and demonstrate that, under appropriate assumptions, the general theory in Section \ref{sec_general_theory2} can be applied.
In Section \ref{sec_specific_models}, we introduce specific models including NBNSP-G and show that the asymptotic theory from Section \ref{sec_NS_model} can be applied.
In Section \ref{sec_simulation}, we perform numerical simulations under various settings.
In Section \ref{sec_realdata}, we conduct experiments using real high-frequency financial data.
In Section~\ref{sec_extension}, we discuss alternative estimation procedures and possible extensions of the model to nonstationary and multivariate settings.
Section~\ref{sec_proof} contains the proofs of the theoretical results.

\section{Preliminaries}\label{sec_preliminaries}
\subsection{Notation}
For a topological space $\calx$, $\calb(\calx)$ is the Borel $\sigma$-algebra.
For $x\in \bbR$ and $r>0$, $B(x, r)$ is the open ball centered at $x$ with radius $r$.
For $W\subset \bbR$ and $r>0$, $W\ominus r$ denotes $\{x\in W; B(x, r)\subset W\}$
and $W\oplus r$ denotes $\{x\in \bbR; \exists y\in W \text{ s.t. } y\in B(x, r)\}$.
For a $\bbR$-valued random variable $X$ and $p\geq 1$, $||X||_p$ is the $L^p$-norm.
For $A\in \calb(\bbR)$, $\mathrm{Leb}(A)$ is the value of the Lebesgue measure on $\bbR$.
For a countable set $I$, $\#I$ is the cardinality of $I$.
The Euclidean norm is denoted by $|\cdot|$ for a real-valued matrix or a vector.

For $a\in \bbZ_{\geq 1}$, let $\mathfrak{S}_a$ denote the symmetric group of degree $a$.
For integers $1\leq a\leq b$, let
\(
\widetilde{\calp}_a^b
=
\{\pi:\{1,\ldots,b\}\to\{1,\ldots,a\};\ \pi \text{ is surjective}\}.
\)
We define an equivalence relation $\sim$ on $\widetilde{\calp}_a^b$ by
\(
\pi_1\sim\pi_2
\Leftrightarrow
\exists\sigma\in\mathfrak{S}_a\ \mathrm{s.t.}\ \pi_1 = \sigma\circ\pi_2.
\)
Let $\calp_a^b = \widetilde{\calp}_a^b / \sim$ be the set of equivalence classes.
Elements of $\calp_a^b$ are identified with partitions of $\{1,\ldots,b\}$ into $a$ nonempty subsets.
In the subsequent formulas, we often index the blocks of a partition using integers $1, \ldots, a$.
To this end, for a partition $\pi \in \calp_a^b$, we adopt the convention of choosing a representative surjection (still denoted by $\pi \in \widetilde{\calp}_a^b$) and writing
\( \pi^{-1}(l) = \{k \in \{1,\ldots,b\}; \pi(k) = l\} \)
for the $l$-th block of the partition, $1\leq l\leq a$. For a partition $\pi$, $|\pi^{-1}(l)|$ is the size of $l$-th block.
The choice of the representative does not affect the values of the sums where these notations appear.
For $I, n_1,\ldots,n_I, m\in\bbZ_{\geq 1}$ such that $m\leq n_1+\cdots+n_I$, let
\(
\mathcal{D} = \bigsqcup_{i=1}^I\{1,\ldots,n_i\}
=
\{(i,j);\ i=1,\ldots,I,\ j=1,\ldots,n_i\}
\)
and
\(
\widetilde{\calp}_m^{n_1\sqcup\cdots\sqcup n_I}
=
\{
\pi:\mathcal{D}\to\{1,\ldots,m\};\ \pi\ \text{is surjective}
\}.
\)
Similarly, we define the set of partitions of $\mathcal{D}$ into $m$ subsets as the quotient space
\(
\calp_m^{n_1\sqcup\cdots\sqcup n_I}
=
\widetilde{\calp}_m^{n_1\sqcup\cdots\sqcup n_I}/\sim
\)
under the action of $\mathfrak{S}_m$.
For a partition $\pi\in\calp_m^{n_1\sqcup\cdots\sqcup n_I}$, we again refer to a representative surjection $\pi$ to denote the $l$-th component (block) of the partition by $\pi^{-1}(l)$ for $1\leq l\leq m$.
Furthermore, we decompose each block $\pi^{-1}(l)$ into its intersections with the $i$-th component of the disjoint union.
Specifically, for $1\leq i\leq I$, we define
\(
\pi^{-1}(l)_i
=
\{(i,j) \in \mathcal{D};\ \pi((i,j)) = l\}.
\)
This set represents the subset of points in the $i$-th dimension that belong to the $l$-th block.

\subsection{Point processes on the real line}
Let $(\Omega, \calf, P)$ be a probability space.
Suppose that
$\caln_{\bbR}^{\#}$
is the class of locally finite $\bbZ_{\geq 0}\cup\{\infty\}$-valued measures on $\bbR$,
$\calb(\caln_{\bbR}^{\#})$ is the smallest $\sigma$-field on $\caln_{\bbR}^{\#}$ with respect to which the mappings
$\Phi_A: \caln_{\bbR}^{\#} \to \bbZ_{\geq 0},  \mu \mapsto \mu(A)$ are measurable for all bounded Borel set $A\in \calb(\bbR)$.
The mapping $N: \Omega \to \caln_{\bbR}^{\#}$ is a point process if $N$ is $\calf / \calb(\caln_{\bbR}^{\#})$-measurable.
Since point processes can be seen as a countable set of random points (possibly with repetitions),
we sometimes use abbreviated notations such as $x\in N$ for a point process $N$.
Let $\caln_{\bbR}^{\#*} = \{\mu\in \caln_{\bbR}^{\#}; \forall x\in\bbR: \mu(\{x\}) \leq 1\}$.
A point process $N$ is simple if $P[N\in \caln_{\bbR}^{\#*}] = 1$.
Let $N_i: \Omega \to \caln_{\bbR}^{\#}, i=1, \ldots, I, I\in\bbZ_{i\geq 1}$ be point processes.
A family $(N_i)_{i=1}^I$ is a multivariate point process if the sum $\sum_{i=1}^I N_i$ is simple.
Let $S_u: \caln_{\bbR}^{\#} \to \caln_{\bbR}^{\#}$ is the shift operator defined by $S_u(\mu)(A) = \mu(A+u)$ for $A\in\calb(\bbR)$, $u\in\bbR$.
N is stationary if the finite dimensional distributions of $N$ and $S_u N$ coincide for all $u\in \bbR$, i.e.
\[
  P\bigl(
  N(A_1) = k_1, \ldots, N(A_l) = k_l
  \bigr)
  = P\bigl(
  N(A_1+u) = k_1, \ldots, N(A_l+u) = k_l
  \bigr)
\]
holds for all bounded $A_1, \ldots, A_l \in \calb(\bbR)$, $k_1, \ldots, k_l\in \bbZ_{\geq 0}$, $l\in \bbZ_{\geq 1}$, $u\in\bbR$.
A multivariate point process $(N_i)_{i=1}^I$ is stationary if
$$
  P\bigl(
  N_{i_1}(A_1) = k_1, \ldots, N_{i_l}(A_l) = k_l
  \bigr)
  = P\bigl(
  N_{i_1}(A_1+u) = k_1, \ldots, N_{i_l}(A_l+u) = k_l
  \bigr)
$$
holds for all bounded $A_1, \ldots, A_l \in \calb(\bbR)$, $k_1, \ldots, k_l\in \bbZ_{\geq 0}$, $l\in \bbZ_{\geq 1}$, $i_m\in \{1, \ldots, I\}$, $u\in\bbR$.
These formulations are based on Daley \& Vere-Jones~\cite{DaleyandVereJones2}.

\subsection{Moment measures}
Let $N=(N_i)_{i=1}^I$ be a multivariate point process.
In the following, we say $N$ has a $k$-th moment if
$
  \forall i\in\{1, \ldots, I\},
  \forall A \in \calb(\bbR), \text{$A$ is bounded}:
  E[N_i(A)^k] < \infty.
$
Suppose that $n_i\in \bbZ_{\geq 0}, i = 1, \ldots, I$ and $N$ has sufficient orders of moments.
The moment measure of order $(n_1, \ldots, n_I)$ of $N$ is defined by
\begin{eqnarray*}
  M_{(n_1, \ldots, n_I)}
  \Bigl(\prod_{i=1}^I \prod_{j=1}^{n_i} A_{i,j}\Bigr)
  = E\Bigl[
    \prod_{i=1}^I
    \Bigl(
    \int_{\bbR^{n_i}}
    1_{\{x_{i,1}\in A_{i,1},\ldots, x_{i,n_i}\in A_{i,n_i}\}}
    (N_i\times\cdots \times N_i)(dx_{i,1} \times \cdots \times dx_{i,n_i})
    \Bigr)
    \Bigr]
\end{eqnarray*}
for bounded $A_{i, j}\in \calb(\bbR), j=1, \ldots, n_i$, and $i = 1, \ldots, I$.

The factorial moment measure of order $(n_1, \ldots, n_I)$ of $N$ is defined by
\begin{eqnarray*}
  M_{[n_1, \ldots, n_I]}
  \Bigl(\prod_{i=1}^I \prod_{j=1}^{n_i} A_{i,j}\Bigr)
  = E\Bigl[
    \prod_{i=1}^I
    \Bigl(
    \int_{\bbR^{n_i}}
    1_{\Bigl\{\substack{x_{i,1}\in A_{i,1},\ldots, x_{i,n_i}\in A_{i,n_i} \\
          x_{i, a}\neq x_{i, b}, a, b \in\{1, \ldots, n_i\}}\Bigr\}}
    (N_i\times\cdots \times N_i)(dx_{i,1} \times \cdots \times dx_{i,n_i})
    \Bigr)
    \Bigr]
\end{eqnarray*}
for bounded $A_{i, j}\in \calb(\bbR), j=1, \ldots, n_i$, and $i = 1, \ldots, I$.

We will give relationships between the moment and the factorial moment measures as a generalization of Exercise 5.4.5 of Daley \& Vere-Jones~\cite{daley2003introduction}.
\begin{proposition}
  Suppose that $N=(N_i)_{i=1}^I$ is a multivariate point process with sufficient orders of moments, $n_i\in \bbZ_{\geq 0}$, and $i = 1, \ldots, I$. Then
  \begin{align}\label{eq_factorial_and_normal_moment}
     & \quad M_{(n_1, \ldots, n_I)}
    \Bigl(\prod_{i=1}^I \prod_{j=1}^{n_i} dx_{i,j}\Bigr)                        \nonumber                                   \\
     & = \sum_{m_1=1}^{n_1} \sum_{\pi_1 \in \calp_{m_1}^{n_1}} \cdots \sum_{m_I=1}^{n_I} \sum_{\pi_I \in \calp_{m_I}^{n_I}}
    \prod_{i=1}^I \prod_{l=1}^{m_i} \prod_{j\in \pi_i^{-1}(l)}\delta(y_{i, l} - x_{i, j})
    M_{[m_1, \ldots, m_I]}
    \Bigl(\prod_{i=1}^I\prod_{l=1}^{m_i} dy_{i, l}\Bigr)        \nonumber                                                   \\
     & = \sum_{m_1=1}^{n_1} \sum_{\pi_1 \in \calp_{m_1}^{n_1}} \cdots \sum_{m_I=1}^{n_I} \sum_{\pi_I \in \calp_{m_I}^{n_I}}
    \int_{\bbR^{Im_i}}
    \prod_{i=1}^I \prod_{l=1}^{m_i} \prod_{j\in \pi_i^{-1}(l)}\delta_{y_{i, l}}(dx_{i, j})
    M_{[m_1, \ldots, m_I]}
    \Bigl(\prod_{i=1}^I\prod_{l=1}^{m_i} dy_{i, l}\Bigr)
  \end{align}
\end{proposition}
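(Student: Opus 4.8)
The plan is to reduce the multivariate identity to a product of the univariate cases, then prove the univariate case by a standard expansion of a monomial in terms of falling factorials.

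First I would recall the univariate statement, which is essentially Exercise 5.4.5 of Daley and Vere-Jones: for a single point process $N$ with enough moments, the ordinary moment measure of order $n$ decomposes as a sum over partitions $\pi\in\calp_m^n$ of the $m$-th factorial moment measure, with variables identified according to the blocks of $\pi$. The combinatorial heart of this is the identity, valid for any measure $N$ on $\bbR$,
\begin{align*}
  (N\times\cdots\times N)(dx_1\times\cdots\times dx_n)
  = \sum_{m=1}^n \sum_{\pi\in\calp_m^n}
  \int_{\bbR^m}\prod_{l=1}^m\prod_{j\in\pi^{-1}(l)}\delta_{y_l}(dx_j)\,
  N^{[m]}(dy_1\times\cdots\times dy_m),
\end{align*}
where $N^{[m]}$ is the $m$-fold product of $N$ restricted to the ``all coordinates distinct'' diagonal complement. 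This is nothing but sorting the $n$-tuple $(x_1,\dots,x_n)$ by which points of $N$ they coincide with: any $n$-tuple of points of $N$ determines a partition of $\{1,\dots,n\}$ (two indices in the same block iff they pick the same point) together with an injective $m$-tuple of distinct points, and conversely. Taking expectations of both sides and using Fubini (justified by the moment assumption, so all integrands are integrable) gives the univariate version of \eqref{eq_factorial_and_normal_moment}.

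Next I would handle the multivariate case. The key observation is that in the definition of $M_{(n_1,\dots,n_I)}$ the expectation is of a \emph{product} over $i=1,\dots,I$ of integrals, where the $i$-th factor involves only $N_i$. Applying the univariate pointwise identity above to each factor $N_i$ separately — replacing $n$ by $n_i$, introducing partitions $\pi_i\in\calp_{m_i}^{n_i}$ and new variables $y_{i,l}$ — turns the integrand into a sum over all tuples $(m_1,\pi_1,\dots,m_I,\pi_I)$ of a product $\prod_i\prod_l\prod_{j\in\pi_i^{-1}(l)}\delta_{y_{i,l}}(dx_{i,j})$ times $\prod_i N_i^{[m_i]}(\prod_l dy_{i,l})$. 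Pulling the (finite) sum outside the expectation and recognizing that $E\bigl[\prod_{i=1}^I N_i^{[m_i]}(\prod_l dy_{i,l})\bigr] = M_{[m_1,\dots,m_I]}(\prod_i\prod_l dy_{i,l})$ by the very definition of the multivariate factorial moment measure yields the claimed formula. The two displayed forms in \eqref{eq_factorial_and_normal_moment} are just the ``$\delta$-function'' notation and the ``$\delta$-measure'' notation for the same object, so no separate argument is needed; I would simply remark on the notational identification (and note the harmless typo $\bbR^{Im_i}$ should read $\bbR^{\sum_i m_i}$).

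The main obstacle is not conceptual but bookkeeping: carefully setting up the bijection between $n_i$-tuples of points of $N_i$ and pairs (partition, tuple of distinct points), and making sure the interchange of the finite sums over partitions with the expectation and with the iterated integrals is legitimate — which it is, because the number of partitions is finite and the moment hypothesis guarantees every term is absolutely integrable, so Fubini–Tonelli applies termwise. I would also take care that the degenerate conventions (e.g.\ when some $n_i=0$, the corresponding factor is the constant $1$ and contributes no sum) are consistent with the definitions given. Once the univariate identity is in hand, the passage to the multivariate case is essentially immediate because the structure factorizes over the index $i$.
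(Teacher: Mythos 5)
Your proposal is correct and follows essentially the same route as the paper: the paper's (one-line) proof is precisely the case division by the number of distinct points $m_i$ and by which indices coincide (the partition $\pi_i\in\calp_{m_i}^{n_i}$), applied componentwise and followed by taking expectations. You have simply filled in the bookkeeping (the pointwise bijection, the factorization over $i$, and the Fubini justification) that the paper leaves implicit.
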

\begin{proof}
  We derive the result by dividing cases by the number of distinct points ($=m_i$) and which points are duplicates ($\pi_i \in \calp_{m_i}^{n_i}$) for each $i=1, \ldots, I$.
\end{proof}
\halflineskip
We recall some formulae for stationary multivariate point processes.
Let $N=(N_i)_{i=1}^I$ be a stationary multivariate point process on $\bbR$.
Suppose that $\lambda_i$ is the intensity of $N_i$, and $\lambda_{i,j}(\cdot)$ is the cross-intensity function of $N_i$ and $N_j$, for $i, j = 1, \ldots, I, i\neq j$.
Then, for any non-negative Borel measurable function $h: \bbR\to\bbR$ and $D\in \calb(\bbR)$, we have
\[
  E\Bigl[\sum_{x\in N_i}h(x)\Bigr] = \lambda_i\int_{\bbR} h(u)du
\]
and
\begin{align}\label{cross_intensity_formula}
  E\Bigl[\sum_{\substack{x\in N_i, y\in N_j \\ x\in D}} h(y-x)\Bigr]
  = \mathrm{Leb}(D) \int_{\bbR} h(u) \lambda_{i,j}(u) du.
\end{align}
We will refer to these equations as Campbell's formulae.
The cross-correlation function $g_{i,j}(\cdot)$ of $N_i$ and $N_j$, $i, j = 1, \ldots, I, i\neq j$ is defined as
\[
g_{i,j}(u) = \frac{\lambda_{i,j}(u)}{\lambda_i\lambda_j}.
\]

\subsection{Moments and cumulants}
Let $(N_i)_{i=1}^I$ be a multivariate point process with sufficient order of moments.
The probability generating functional (p.g.fl) of $(N_i)_{i=1}^I$ is
\(
G(h_1, \ldots, h_I) = E\Bigl[\exp\Bigl(\sum_{i=1}^{I} \int_{\bbR} \log{h_i}dN_i\Bigr)\Bigr]
\)
where $h_i: \bbR \to (0, 1]$ is measurable function such that the support of $(1-h_i)$ is bounded for $i=1,\ldots, I$.

Suppose $h_i = \sum_{j=1}^{n_i}t_{ij}1_{A_{i,j}} + 1_{(\cup_{j=1}^{n_i}A_{i,j})^c}$ where
$\{A_{i,j}\}_{j=1}^{n_i}$ is a family of Borel sets such that
(
\(
\forall j_1, j_2 \in \{1, \ldots, n_i\}:
A_{i, j_1} = A_{i, j_2}\ \text{or} \ A_{i, j_1} \cap A_{i, j_2} = \emptyset
\)
)
and $0< t_{ij} \leq 1, j=1, \ldots, n_i$ for each $i=1, \ldots, I$.
We derive the factorial moment measure by taking derivatives at
$(t_{ij})_{j=1}^{n_i} =: \bm{t_i} = \bm{1} := (1, \ldots, 1)$:
\[
  \Bigl(
  \prod_{i=1}^I \partial_{t_{i1}}\cdots \partial_{t_{in_i}}
  \Bigr)
  G(h_1, \ldots, h_I)(\bm{t_1}, \ldots, \bm{t_I}) \Big|_{\bm{t_1}, \ldots, \bm{t_n} = \bm{1}}
  =  M_{[n_1, \ldots, n_I]}
  \Bigl(\prod_{i=1}^I \prod_{j=1}^{n_i} A_{i,j}\Bigr),
\]
$G(h_1, \ldots, h_I)$ regarded as a function of $\bm{t_1}, \ldots, \bm{t_I}$.
The factorial cumulant measure $C_{[n_1, \ldots, n_I]}$ of order $(n_1, \ldots, n_I)$ is defined by
\begin{align}\label{fcumt2fmom}
  C_{[n_1, \ldots, n_I]}\Bigl(\prod_{i=1}^I \prod_{j=1}^{n_i} dx_{i,j}\Bigr)
  =
  \sum_{m=1}^{n_1+\cdots +n_I} (-1)^{m-1} (m-1)! \sum_{\pi\in\calp_m^{n_1\sqcup \cdots \sqcup n_I}}
  \prod_{l=1}^m
  M_{[|\pi^{-1}(l)_1|, \ldots, |\pi^{-1}(l)_I|]}
  \Bigl(\prod_{(i, j)\in \pi^{-1}(l)}dx_{i,j} \Bigr).
\end{align}
We can obtain the factorial cumulant measure $C_{[n_1, \ldots, n_I]}$ from the logarithm of the p.g.fl:
\begin{equation}\label{eq_cumulant_general_formula}
  \Bigl(
  \prod_{i=1}^I \partial_{t_{i1}}\cdots \partial_{t_{in_i}}
  \Bigr)
  \log(G(h_1, \ldots, h_I)(\bm{t_1}, \ldots, \bm{t_I}))\Big|_{\bm{t_1}, \ldots, \bm{t_n} = \bm{1}}
  =  C_{[n_1, \ldots, n_I]}
  \Bigl(\prod_{i=1}^I \prod_{j=1}^{n_i} A_{i,j}\Bigr).
\end{equation}
The factorial cumulant measures express the factorial moment measure as
\begin{align}\label{fmomt2fcum}
  M_{[n_1, \ldots, n_I]}\Bigl(\prod_{i=1}^I \prod_{j=1}^{n_i} dx_{i,j}\Bigr)
  =
  \sum_{m=1}^{n_1+\cdots +n_I} \sum_{\pi\in\calp_m^{n_1\sqcup \cdots \sqcup n_I}}
  \prod_{l=1}^m
  C_{[|\pi^{-1}(l)_1|, \ldots, |\pi^{-1}(l)_I|]}
  \Bigl(\prod_{(i, j)\in \pi^{-1}(l)}dx_{i,j} \Bigr).
\end{align}

\subsection{Mixing}
Suppose that $(\Omega, \calf, P)$ is a probability space and $\calg, \calh \subset \calf$ are $\sigma$-algebras.
The $\alpha$-mixing coefficient of $\calg$ and $\calh$ is
\(
\alpha(\calg, \calh) = \sup \{|P(C \cap D) - P(C)P(D)|; C \in \calg, D \in \calh \}.
\)
Let $\bbL$ be a countable subset of $\bbZ$.
The $\alpha$-mixing coefficient of a random field $Z = \{Z(l)\}_{l\in\bbL}$ is
\begin{align*}
  \tilde{\alpha}_{c_1, c_2}^Z(m; \bbL) =
  \sup \{
   & \alpha(\sigma(Z(l); l\in I_1), \sigma(Z(l); l\in I_2));       \\
   & I_1\subset\bbL, I_2\subset\bbL, \#I_1\leq c_1, \#I_2\leq c_2,
  d(I_1, I_2)\geq m
  \}, \quad m, c_1, c_2 \geq 0.
\end{align*}
We will write $\tilde{\alpha}_{c_1, c_2}^Z(m) = \tilde{\alpha}_{c_1, c_2}^Z(m; \bbZ)$.
Suppose that $N = (N_i)_{i=1}^I$ is a multivariate point process, $N\cap A = \{N_i\cap A\}_{i=1}^I = \{N_i(\cdot\cap A)\}_{i=1}^I$ is the restriction of $N$ to $A\in \calb(\bbR)$,
$C(l)$ is an interval of side length $1$ centered at $l\in\bbZ$, i.e.
$
  C(l) = (l-\frac{1}{2}, l+\frac{1}{2}],
$
and
$
  D(A) = \{l\in \bbZ;  C(l)\cap A\neq \emptyset\}
$
for any $A\subset \bbR$.
The $\alpha$-mixing coefficient of $N$ is
\begin{align*}
  \alpha_{c_1, c_2}^N(m; r) =
  \sup \Bigl\{
   & \alpha(\sigma(\{N_i\cap E_1\}_{i=1}^I), \sigma(\{N_i\cap E_2\}_{i=1}^I));   \\
   & E_1 = \bigcup_{l\in M_1}C(l)\oplus r, E_2 = \bigcup_{l\in M_2}C(l)\oplus r, \\
   & \#M_1\leq c_1, \#M_2\leq c_2,
  d(M_1, M_2)\geq m, M_1, M_2\subset \bbZ
  \Bigr\}, \quad m, c_1, c_2, r \geq 0.
\end{align*}
This definition is a generalization to the multivariate case of the one appearing in Prokešová et al. \cite{prokevsova2017two}, p. 528.
We note that, if a random field $\{X_l\}_{l\in\bbZ}$ satisfies $X_l\in\sigma(N\cap C(l)\oplus r)$ for each $l\in\bbZ$, then
\begin{equation}\label{eq_note_on_mixing1}
  \tilde{\alpha}_{c_1, c_2}^X (m)
  \leq \alpha_{c_1, c_2}^N (m-2r-2; r)
\end{equation}
for all $c_1, c_2, r\geq 0$ and $m\geq2r+2$.

\subsection{Moment inequalities}
We refer to a Rosenthal-type inequality for use in the proof of the asymptotic properties of the estimator.
\begin{theorem}[Moment inequality, Doukhan {\cite{Doukhan1994}} {pp. 25-26}]\label{mixing_moment_ineq}
  \rm
  Suppose that $X = \{X_t\}_{t \in \bbZ}$ is a family of random variables indexed by $\bbZ$, $T$ is a finite subset of $\bbZ$, and $L > 2$.
  If there exist $\epsilon > 0$ and an even integer $c$ larger than $L$ such that
  \begin{gather}
    \label{mixing_condition1}
    \forall u, v \in \bbZ_{\geq 2}, u+v \leq c:  \quad
    \sum_{k=1}^{\infty} (1+k)^{c-u}
    \tilde{\alpha}_{u, v}^X(k)^{\frac{\epsilon}{c + \epsilon}} < \infty, \\
    \forall t \in \bbZ: \quad
    ||X_t||_{L+\epsilon} < \infty , \quad E[X_t] = 0,
  \end{gather}
  then there is some constant $C$ only depending on $L$ and on the $\alpha$-mixing coefficient of $X$ such that
  \(
  E[|\sum_{t \in T} X_t|^L]
  \leq C \times \max\{M(L, \epsilon, T),  M(2, \epsilon, T)^{\frac{L}{2}}\}
  \)
  where
  \(
  M(L, \epsilon, T) = \sum_{t \in T} \|X_t\|_{L+\epsilon}^{L}.
  \)
\end{theorem}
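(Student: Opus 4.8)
The plan is to prove this classical Rosenthal-type bound by combining a covariance (equivalently, joint-cumulant) inequality for $\alpha$-mixing fields with a combinatorial expansion of even-order moments, and then descending from the even integer exponent $c$ to the prescribed real exponent $L$. Throughout one may relabel so that $T=\{1,\dots,n\}$ and write $S=\sum_{t\in T}X_t$; every constant will depend only on $L$ and on the mixing rates through the weighted series in \eqref{mixing_condition1}.

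First I would record the elementary mixing input. Davydov's covariance inequality gives, for $Y\in\sigma(X_t;t\in I_1)$ and $Z\in\sigma(X_t;t\in I_2)$ with $d(I_1,I_2)\ge k$, $\#I_1\le u$, $\#I_2\le v$, a bound $|\mathrm{Cov}(Y,Z)|\le C\,\tilde{\alpha}_{u,v}^X(k)^{1-1/p-1/q}\|Y\|_p\|Z\|_q$; more generally, the joint cumulants of any order $j\le c$ of the $X_t$'s decay like $\tilde{\alpha}_{u,v}^X(\cdot)$ raised to a power times a product of $\|X_t\|_{L+\epsilon}$-type norms, and the integrability parameters are chosen so that the mixing power becomes $\epsilon/(c+\epsilon)$. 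This is the only point at which mixing is used.

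The heart of the argument is the even case, i.e.\ the statement with $c$ in place of $L$ throughout (carrying the stronger hypothesis $X_t\in L^{c+\epsilon}$, to which the general case is reduced afterwards). Here one expands $E[S^{c}]=\sum_{(t_1,\dots,t_c)\in T^{c}}E[X_{t_1}\cdots X_{t_c}]$, rewrites each expectation through the moment--cumulant formula, and classifies each tuple according to the partition it induces by spatial proximity — its ``interaction diagram''. For a fixed diagram with $m$ blocks one bounds the contribution by detaching the most separated block via the covariance/cumulant estimate above and recursing on the remaining factors, so each summand becomes a product of mixing coefficients times $\|X_t\|_{L+\epsilon}$-norms; summing over the tuples realising a prescribed separation structure is then controlled precisely by the hypothesis $\sum_{k}(1+k)^{c-u}\tilde{\alpha}_{u,v}^X(k)^{\epsilon/(c+\epsilon)}<\infty$ of \eqref{mixing_condition1}, since each extra unit of separation costs one mixing factor while the number of admissible configurations grows only polynomially. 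Collecting diagrams, the ``diagonal'' regime of a few large blocks produces $M(c,\epsilon,T)$, the ``pairing'' regime of $c/2$ blocks of size two produces $M(2,\epsilon,T)^{c/2}$, and every intermediate diagram is dominated by the larger of these two by a convexity-in-the-exponents interpolation; this yields $E[|S|^{c}]\le C\max\{M(c,\epsilon,T),\,M(2,\epsilon,T)^{c/2}\}$.

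It remains to descend from $c$ to the given real $L\in(2,c)$, where simply using $\|S\|_L\le\|S\|_c$ would be far too wasteful on the right-hand side. The cleaner route is truncation: split $X_t=Y_t+Z_t$ with $Y_t$ a centred truncation at a level $\tau_t$ and $Z_t$ the centred tail, apply the even-$c$ bound to $\sum_t Y_t$ (all of whose moments are finite, and which inherits the mixing condition since $Y_t$ is a function of $X_t$) and estimate $\sum_t Z_t$ directly in $L^{L}$ using that $\|Z_t\|_{L}$ is small once $\|X_t\|_{L+\epsilon}<\infty$, then optimise over the levels $\tau_t$ to recombine the two pieces into $C\max\{M(L,\epsilon,T),M(2,\epsilon,T)^{L/2}\}$. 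I expect the principal obstacle to be the middle step: setting up the interaction diagrams, counting the tuples with a prescribed proximity structure, and verifying that every diagram's contribution is genuinely summable under \eqref{mixing_condition1} and is dominated by one of the two extreme regimes — this bookkeeping, rather than any single inequality, is what makes the argument lengthy, and it is carried out in Doukhan \cite{Doukhan1994}, pp.~25--26.
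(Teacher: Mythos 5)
The paper does not prove this statement at all: Theorem~\ref{mixing_moment_ineq} is imported verbatim as a black-box tool from Doukhan (1994), pp.~25--26, and is only ever \emph{used} (in the proofs of the ergodic-type lemmas and the CLT), never re-derived. So there is no in-paper proof to compare yours against; the relevant comparison is with the argument in the cited reference. Your outline does track the structure of that argument correctly: the Davydov-type covariance/cumulant inequality is indeed the only place mixing enters, the even-integer case is handled by expanding $E[S^{c}]$ over tuples, organizing them by proximity/partition structure, detaching the most separated block, and observing that the weighted series in (\ref{mixing_condition1}) is exactly what makes the sum over separation distances converge, with the two extreme diagram regimes producing $M(c,\epsilon,T)$ and $M(2,\epsilon,T)^{c/2}$.

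That said, what you have written is a road map rather than a proof: the two genuinely hard steps are both deferred. The combinatorial core (defining the diagrams, counting tuples with prescribed gaps, and checking that \emph{every} intermediate diagram is dominated by one of the two extremes via an exponent-convexity argument) is precisely where the work lies, and you explicitly hand it back to Doukhan. The descent from the even integer $c$ to the real exponent $L$ is also only gestured at; you are right that $\|S\|_L\le\|S\|_c$ is too lossy, but the truncation-plus-optimization you propose needs to be checked carefully --- the truncated variables must stay centred and adapted to the same $\sigma$-fields (so the mixing coefficients are unchanged), the tail part $\sum_t Z_t$ must be summed in $L^L$ without any mixing help using only $\|X_t\|_{L+\epsilon}<\infty$, and the optimization over the levels $\tau_t$ must return a constant depending only on $L$ and the mixing rates, not on $T$. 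None of this is fatal --- it is the standard route --- but as submitted the proposal establishes the shape of the proof, not the proof itself.
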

\halflineskip
\section{Construction of the estimator}\label{sec_general_theory}
For the sake of parameter estimation of a point process model on $\bbR$, it is usual to use the conditional intensity function.
However, no explicit form of the conditional intensity is available for models of our interest, such as Neyman-Scott type models discussed in Section \ref{sec_NS_model}, thus we use the quasi-likelihood function based on the moment density functions as in the literature (e.g.~\cite{Guan2006,Tanaka2008}).

Let $N = (N_1, N_2)$ be a stationary bivariate point process on $\bbR$ with intensities $\lambda_i, i=1, 2$ and a parametric cross-correlation function $g_{1, 2}(\cdot; \theta)=g(\cdot; \theta)$, where $\theta$ is the parameter.
Then, the cross-intensity function is $\lambda_{1,2}(\cdot; \theta) = \lambda_1\lambda_2 g(\cdot; \theta)$.
Suppose that $W \in \calb(\bbR)$ is the bounded observation window.
In order to estimate the parameter $\theta$, we will maximize the following quasi-likelihood function $\wit{\bbH}(\theta; W)$ with respect to $\theta$.
\begin{align*}
  \wit{\bbH}(\theta; W)
   & = \sum_{\substack{x\in N_1, y\in N_2                              \\ |y-x|\leq r \\ x\in W\ominus r}}
  \log(\lambda_{1,2}(y-x; \theta))
  - \mathrm{Leb}(W\ominus r) \int_{|u|\leq r}\lambda_{1,2}(u; \theta)du \\
   & = \sum_{\substack{x\in N_1, y\in N_2                              \\ |y-x|\leq r \\ x\in W\ominus r}}
  \Bigl(
  \log(g(y-x; \theta)) + \log\lambda_1 + \log\lambda_2
  \Bigr)
  - \mathrm{Leb}(W\ominus r)
  \lambda_1\lambda_2\int_{|u|\leq r}g(u; \theta)du.
\end{align*}
We restrict the range of $x$ to $W\ominus r$ from $W$ because the inner edge bias correction guarantees the unbiasedness of the score function.
We do not generally know the true value of $\lambda_1$ and $\lambda_2$.
However, if we estimate them by some estimator $\wih{\lambda}_i$, we can use
\begin{align*}
  \bbH(\theta; W)
  = \sum_{\substack{x\in N_1, y\in N_2 \\ |y-x|\leq r \\ x\in W\ominus r}}
  \Bigl(
  \log(g(y-x; \theta)) + \log\wih{\lambda}_{1, n} + \log\wih{\lambda}_{2, n}
  \Bigr)
  - \mathrm{Leb}(W\ominus r)
  \wih{\lambda}_{1, n}\wih{\lambda}_{2, n}\int_{|u|\leq r}g(u; \theta)du
\end{align*}
instead of $\wit{\bbH}(\theta; W)$.
We call an estimator $\widehat{\theta}$ that maximizes $\bbH(\theta; W)$ a quasi-maximum likelihood estimator (QMLE).
Theoretical details will be given in the next section.
This construction of the quasi-log likelihood function is based on the idea of two-step estimation (e.g.~\cite{choiruddin2021two,prokevsova2017two}).

We will introduce a more concrete setting for the asymptotic theory.
Suppose that the parameter space $\Theta \subset \bbR^p$ is a bounded open set, $\{W_n\}_{n=1}^{\infty}$ is a sequence of increasing compact subset of $\bbR$.
Let us assume that the parameter of interest of our stationary bivariate point process model $(N_1, N_2)$ on $\bbR$ is the parameter $\theta\in\ol{\Theta}$ and that the cross-correlation function $g(\cdot; \theta)$ is parametrized by $\theta\in\ol{\Theta}$.
Moreover, we suppose that the true values of the intensities $\lambda_1$, $\lambda_2$, and parameter $\theta$ are $\lambda_1^*>0$, $\lambda_2^*>0$, and $\theta^*\in\Theta$, respectively.
Fix a user-specified parameter $r>0$.
Let us write $a_n = \mathrm{Leb}(W_n\ominus r)$, $D_n = D(W_n\ominus r)$, and
\(
\wih{\lambda}_{i,n} = \frac{1}{a_n}N_i(W_n\ominus r).
\)
Suppose
\(
F_2^N(h; V) = \sum_{\substack{x\in N_1, y\in N_2 \\ |y-x|\leq r, x\in V}}h(y-x)
\)
for $V\in\calb(\bbR)$ and a measurable function $h:\bbR\to\bbR^d$, $d\geq 1$.
We note that, if $h(\cdot)g(\cdot; \theta^*)\in \mathrm{L}^1([-r, r])$, then
\begin{equation*}
  E[F_2^N(h; V)]
  = \mathrm{Leb}(V) \lambda_1^*\lambda_2^*\int_{|u|\leq r} h(u)g(u; \theta^*)du,
\end{equation*}
because of Campbell's formula (\ref{cross_intensity_formula}).
We sometimes write
$F_{2, n}^N(h) = F_2^N(h; W_n\ominus r)$ for ease of notation.
Note that $F_2^N(h; \cdot)$ is $\sigma$-additive on $\calb(\bbR)$ and $F_2^N(\cdot; V)$ is linear on the space of all Borel measurable functions.
Then, the quasi-likelihood function is
\begin{align*}
  \bbH_n(\theta)
   & = \sum_{\substack{x\in N_1, y\in N_2                                      \\ |y-x|\leq r \\ x\in W_n\ominus r}}
  \Bigl(
  \log(g(y-x; \theta)) + \log\wih{\lambda}_{1, n} + \log\wih{\lambda}_{2, n}
  \Bigr)
  - a_n \wih{\lambda}_{1, n}\wih{\lambda}_{2, n}\int_{|u|\leq r}g(u; \theta)du \\
   & = F_{2, n}^N
  \Bigl(
  \log(g(y-x; \theta)) + \log\wih{\lambda}_{1, n} + \log\wih{\lambda}_{2, n}
  \Bigr)
  - a_n \wih{\lambda}_{1, n}\wih{\lambda}_{2, n}\int_{|u|\leq r}g(u; \theta)du.
\end{align*}
We call a measurable map $\wih{\theta}_n$ satisfying
\begin{equation}\label{eq_def_QMLE}
  \wih{\theta}_n \in \underset{\theta \in \ol{\Theta}}{\rm{argmax}} \ \bbH_n(\theta)
\end{equation}
a quasi-maximum likelihood estimator (QMLE).

\section{Asymptotic Theory}\label{sec_general_theory2}
In this section, we will show the consistency and the asymptotic normality of the QMLE.
Guan~\cite{Guan2006} and Prokešová \& Jensen~\cite{Palm2013} showed asymptotic properties of a similar kind of QMLE for general stationary univariate point processes, and their proof techniques could be used for our multivariate model.
However, the bounded conditions they imposed on the log derivatives of the moment densities are too strong for our concern, especially when we deal with the noisy bivariate Neyman-Scott process (NBNSP) with possibly diverging kernels, which will be introduced in Section \ref{sec_specific_models}. Instead, we consider certain integrability conditions that allow the divergence of the moment densities. (See also the discussion below.)

First, we recall that
$g(\cdot; \theta)$ is the cross-correlation function parametrized by $\theta\in\Theta\subset \mathbb{R}^p$,
$
  C(l) = (l-\frac{1}{2}, l+\frac{1}{2}],
$
and
$
  D(A) = \{l\in \bbZ;  C(l)\cap A\neq \emptyset\}, A\subset \bbR.
$
We will consider the following conditions:
\begin{description}
  \item{\bf{[WI]}}
        $
          W_1 \subset W_2 \subset \cdots \subset \bbR
        $,
        each $W_n$ is bounded,  $\mathrm{Leb}(\cup_{n=1}^{\infty} (W_n\ominus r)) = \infty$, and
        $\frac{\#D(W_n\ominus r)}{\mathrm{Leb}(W_n\ominus r)} = \frac{\#D_n}{a_n}\to 1$.
  \item{\bf{[PA]}}
        The parameter space $\Theta\subset \bbR^p$ is bounded, open, and convex.
  \item{\bf{[ID]}}
        For all $\theta\in\ol{\Theta}$,
        $g(\cdot; \theta) = g(\cdot; \theta^*)$ a.e. on $[-r, r]$ implies $\theta=\theta^*$.
  \item{\bf{[RE]}}
        \begin{description}
          \item{(i)}
                $ \displaystyle
                  \essinf_{|u|\leq r} \inf_{\theta\in\ol{\Theta}}g(u; \theta) > 0.
                $
          \item{(ii)}
                $\displaystyle
                  g(u; \cdot) \in C(\ol{\Theta}) \cap C^2(\Theta)
                $ for all $u\in [-r, r]$.
          \item{(iii)}
                There exists a measurable function $f_{B, 1}: [-r, r]\to \bbR_{\geq0}$ such that
                \begin{align*}
                  \max \{\sup_{\theta\in\Theta}|\pt^i g(\cdot; \theta)|\}_{i=0}^2
                  \leq f_{B, 1}(\cdot)
                \end{align*} on $[-r, r]$ and
                \begin{equation*}
                  \int_{|u|\leq r}f_{B, 1}(u)du < \infty.
                \end{equation*}
          \item{(iv)}
                There exists a measurable function $f_{B, 2}: [-r, r]\to \bbR_{\geq0}$ such that
                \begin{align*}
                  \max \{\sup_{\theta\in\Theta}|\pt^i \log g(\cdot; \theta)|\}_{i=0}^2
                  \leq f_{B, 2}(\cdot)
                \end{align*} on $[-r, r]$ and
                \begin{equation*}
                  \int_{|u|\leq r}f_{B, 2}(u)g(u; \theta^*)du < \infty.
                \end{equation*}
        \end{description}
  \item{\bf{[MI]}}
        There exists $\delta>0$ such that
        \begin{equation*}
          \sum_{m=1}^{\infty}\alpha_{2, \infty}^N(m; r)^{\frac{\delta}{2+\delta}} < \infty,
        \end{equation*}
        \begin{equation*}
          \|N_i(C(0))\|_{2+\delta} < \infty, \quad i=1, 2,
        \end{equation*}
        and
        \begin{equation*}
          \Bigl\|F_2^N\Bigl(|\pt^j\log g(\cdot; \theta)|; C(0)\Bigr)\Bigr\|_{2+\delta} < \infty, \quad j=0, 1, 2, \ \theta\in\Theta.
        \end{equation*}
\end{description}
\halflineskip

Here we discuss the assumptions.
The main advantages over the existing literature are the integrability requirements in [RE](iii)–(iv).
For comparison, following the approach of Prokešová \& Jensen~\cite{Palm2013} or Guan~\cite{Guan2006}, one would assume
\begin{equation}\label{eq_cond_prokesova1}
  \sup_{\theta\in\Theta,\ |u|\le r}\ \bigl|\pt \log g(u;\theta)\bigr|<\infty,
\end{equation}
and 
\begin{equation}\label{eq_cond_prokesova2}
  \lim_{\delta\to 0}\ \sup_{\substack{|u|\le r,\\ \theta_1,\theta_2\in\Theta,\\ |\theta_1-\theta_2|<\delta}}
  \bigl|\pt^2 \log g(u;\theta_1)-\pt^2 \log g(u;\theta_2)\bigr|<\infty.
\end{equation}
The assumption (\ref{eq_cond_prokesova1}) corresponds to a assumption used in Theorem~1 of Prokešová \& Jensen~\cite{Palm2013} and in Theorem~1 of Guan~\cite{Guan2006}, and
the assumption (\ref{eq_cond_prokesova2}) corresponds to the assumption~(14) for Theorem~3 in Prokešová \& Jensen~\cite{Palm2013} and the assumption~(11) for Theorem~2 in Guan~\cite{Guan2006}.
By contrast, very roughly,
$g(u;\theta)\asymp 1 + C|u|^{\alpha(\theta)}$ as $|u|\to 0$
with $\alpha(\theta)>-1$ and $C>0$
for the model NBNSP
as shown in the proof of Lemma~\ref{lem_NS_RE}.
Hence, when $0>\alpha(\theta)>-1$, we have
\(\pt \log g(u;\theta)\approx 
\pt g(u; \theta) /  g(u; \theta)
\approx (C\pt \alpha(\theta)|u|^{\alpha(\theta)}\log|u|) / (1 + C|u|^{\alpha(\theta)})
\simeq \log|u|
\) and similarly $\pt^2 \log g(u;\theta) \simeq (\log|u|)^2$
as $|u|\to 0$.
Since $u\mapsto \log|u|$ diverges at the origin, supremum-based bounds such as
\eqref{eq_cond_prokesova1} and \eqref{eq_cond_prokesova2} are not satisfied.
Thus, the integrability conditions in [RE](iii)–(iv) are tailored to allow such logarithmic divergences.
The mixing and moment conditions in [MI] stem from Bolthausen’s central limit theorem (Lemma \ref{thm_bolthausen_clt}). Although the required mixing rate would be stronger than that used in the blocking argument of Prokešová \& Jensen~\cite{Palm2013} (see Remark 5 therein), we adopt Bolthausen’s CLT to keep the proof simple.
In any case, the models considered in Section \ref{sec_specific_models} are geometrically mixing, so this stronger requirement is not restrictive for our purposes.
The condition [WI] is slightly weaker than $W_n$ are just expanding intervals $[0 ,T_n]$ or $[-T_n, T_n]$, $T_n\to\infty$. 
For instance, $W_n$ may consist of several disjoint subintervals separated by small gaps, as $W_n = \cup_{i=1}^n [iT_n+\delta, (i+1)T_n-\delta], \delta > 0, T_n\to\infty$, which still satisfies the assumptions.
The condition [PA] is standard for asymptotic theory, and the condition [ID] will be checked for specific models in Section \ref{sec_specific_models}.

Under these assumptions, the QMLE is consistent. 
\begin{theorem}\label{thm_consistency}
  Assume the conditions [WI], [PA], [ID], [RE], and [MI].
  Then the QMLE is consistent, i.e. $\wih{\theta}_n\to^p \theta^*$ as $n\to\infty$.
\end{theorem}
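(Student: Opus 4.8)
The plan is to run the classical argmax argument for $M$-estimators on the compact set $\ol{\Theta}$: establish a uniform law of large numbers $\sup_{\theta\in\ol{\Theta}}|\bbY_n(\theta)-\bbY(\theta)|\to^p 0$, verify that $\bbY$ is uniquely maximized at $\theta^*$, and combine the two. The key preliminary observation is the identity
\[
  \bbY_n(\theta)=\frac{1}{a_n}F_{2,n}^N\Bigl(\log\frac{g(\cdot;\theta)}{g(\cdot;\theta^*)}\Bigr)-\wih{\lambda}_{1,n}\wih{\lambda}_{2,n}\int_{|u|\le r}\bigl(g(u;\theta)-g(u;\theta^*)\bigr)\,du ,
\]
in which the $\log\wih{\lambda}_{i,n}$ contributions cancel in the difference $\bbH_n(\theta)-\bbH_n(\theta^*)$; this is why the two-step structure of the estimator is harmless for consistency. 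The second term is $C^3$ in $\theta$ and dominated uniformly by $2\int_{|u|\le r}f_{B,1}(u)\,du<\infty$ ([RE](iii)), hence converges uniformly on $\ol{\Theta}$ to $\lambda_1^*\lambda_2^*\int_{|u|\le r}(g(u;\theta)-g(u;\theta^*))\,du$ once $\wih{\lambda}_{i,n}\to^p\lambda_i^*$ is available. The latter I would obtain from Campbell's first formula, which gives $E[\wih{\lambda}_{i,n}]=\lambda_i^*$, together with $\mathrm{Var}(\wih{\lambda}_{i,n})=O(1/a_n)$: decompose $N_i(W_n\ominus r)=\sum_{l\in D_n}N_i(C(l)\cap(W_n\ominus r))$, control the covariances via the covariance inequality for $\alpha$-mixing using $\|N_i(C(0))\|_{2+\delta}<\infty$ and $\sum_m\alpha_{2,\infty}^N(m;r)^{\delta/(2+\delta)}<\infty$ from [MI], and invoke $\#D_n/a_n\to 1$ and $a_n\to\infty$ from [WI].

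The heart of the proof is the stochastic term $Z_n(\theta):=a_n^{-1}F_{2,n}^N(\log(g(\cdot;\theta)/g(\cdot;\theta^*)))$. For each fixed $\theta$, the edge-corrected Campbell formula (\ref{cross_intensity_formula}) for $F_2^N$ gives $E[Z_n(\theta)]=\lambda_1^*\lambda_2^*\int_{|u|\le r}\log(g(u;\theta)/g(u;\theta^*))\,g(u;\theta^*)\,du$ exactly (the integrand is dominated by $2f_{B,2}(u)g(u;\theta^*)$, integrable by [RE](iv)), while $\mathrm{Var}(Z_n(\theta))\to 0$ follows from the same blocking scheme with blocks $Y_l=F_2^N(\log(g(\cdot;\theta)/g(\cdot;\theta^*));C(l)\cap(W_n\ominus r))$, which are measurable with respect to $N\cap(C(l)\oplus r)$ --- precisely why the mixing coefficient $\alpha_{\cdot,\cdot}^N(\cdot;r)$ carries the $\oplus r$ dilation (and (\ref{eq_note_on_mixing1}) absorbs the additional constant shifts) --- combined with $\|Y_l\|_{2+\delta}\le\|F_2^N(|\log g(\cdot;\theta)|;C(0))\|_{2+\delta}+\|F_2^N(|\log g(\cdot;\theta^*)|;C(0))\|_{2+\delta}<\infty$ from [MI], applied at $\theta$ and at $\theta^*\in\Theta$. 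Thus $\bbY_n(\theta)\to^p\bbY(\theta)$ pointwise, after rearranging the three terms into the form of $\bbY$ displayed before the statement. To pass from pointwise to uniform convergence on the compact $\ol{\Theta}$ I would prove stochastic equicontinuity: $\partial_\theta Z_n(\theta)=a_n^{-1}F_{2,n}^N(\partial_\theta\log g(\cdot;\theta))$ has absolute value at most $a_n^{-1}F_{2,n}^N(f_{B,2})$, whose expectation equals $\lambda_1^*\lambda_2^*\int_{|u|\le r}f_{B,2}(u)g(u;\theta^*)\,du<\infty$ ([RE](iv)), so $\sup_{\theta\in\Theta}|\partial_\theta Z_n(\theta)|=O_p(1)$ by Markov's inequality; since $\Theta$ is convex ([PA]) this delivers a uniform modulus of continuity, and together with pointwise convergence on a countable dense subset it yields $\sup_{\theta\in\ol{\Theta}}|\bbY_n(\theta)-\bbY(\theta)|\to^p 0$ by the usual tightness-in-$C(\ol{\Theta})$ (Arzel\`a--Ascoli) argument.

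Finally, $\bbY(\theta^*)=0$, and since $\log x-x+1\le 0$ with equality only at $x=1$ while $g(u;\theta^*)>0$ ([RE](i)), we obtain $\bbY(\theta)\le 0$ with equality iff $g(\cdot;\theta)=g(\cdot;\theta^*)$ a.e.\ on $[-r,r]$, i.e.\ iff $\theta=\theta^*$ by [ID]; continuity of $\bbY$ on the compact $\ol{\Theta}$ then makes $\theta^*$ a well-separated maximizer. Since $\bbH_n=a_n\bbY_n+\bbH_n(\theta^*)$ the estimator $\wih{\theta}_n$ also maximizes $\bbY_n$, so $\bbY_n(\wih{\theta}_n)\ge\bbY_n(\theta^*)=0$, and from $\bbY(\wih{\theta}_n)\ge\bbY_n(\wih{\theta}_n)-\sup_{\ol{\Theta}}|\bbY_n-\bbY|\ge-\sup_{\ol{\Theta}}|\bbY_n-\bbY|\to^p 0$ together with well-separatedness we conclude $\wih{\theta}_n\to^p\theta^*$. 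The main obstacle, I expect, is exactly this uniform law of large numbers under the relaxed, merely-integrable hypotheses on $g$ and $\log g$: with no sup-norm control on the moment densities, mean convergence, the variance bounds, and equicontinuity must all go through Campbell's formula and the dominating functions $f_{B,1},f_{B,2}$, the variance estimates must come from the per-cell $L^{2+\delta}$ functionals of [MI] rather than from naive bounds, and one must carefully track the edge effects ([WI]) and the dilation-by-$r$ measurability needed to invoke $\alpha_{\cdot,\cdot}^N(\cdot;r)$.
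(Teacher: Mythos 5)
Your proposal is correct and follows essentially the same route as the paper: the same cancellation of the $\log\wih{\lambda}_{i,n}$ terms in $\bbY_n$, the same blocking-plus-covariance-inequality law of large numbers for $\wih{\lambda}_{i,n}$ and for $a_n^{-1}F_{2,n}^N(\cdot)$ (Lemmas \ref{lem_consistency_intensity} and \ref{lem_ergodic_second}), the same uniform convergence via the $O_p(1)$ bound on $\sup_\theta|\pt\bbY_n(\theta)|$ through the dominating functions $f_{B,1},f_{B,2}$ and convexity of $\Theta$ (Lemma \ref{lem_bound_first_diff_y}), and the same identification step via $\log x - x + 1\le 0$ and [ID] (Lemma \ref{lem_id_y}). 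The only cosmetic differences are how the edge terms $\#D_n - a_n$ are bookkept and whether Markov's inequality or the LLN is invoked for the derivative bound; neither affects the argument.
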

\halflineskip

For the asymptotic normality, we will restrict the shape of $W_n$ to ensure the convergence of the variance of the score function and impose positive definiteness on the limit of the observed information.
\begin{description}
  \item{\bf{[WI2]}}
        $W_n = [0, T_n]$.
  \item{\bf{[ID2]}}
        The matrix
        $\Gamma = \lambda_1^*\lambda_2^*\int_{|u|\leq r} \frac{\pt g(u; \theta^*)^{\otimes 2}}{g(u; \theta^*)}du$ is positive definite.
\end{description}
\halflineskip

Then, we have the asymptotic normality of the QMLE.
\begin{theorem}\label{thm_asymptotic_normality}
  Assume the conditions [WI2], [PA], [ID], [ID2], [RE], and [MI].
  Then, we have
  \[
    \sqrt{a_n}(\wih{\theta}_n - \theta^*) \to N(0, \Gamma^{-1}\Sigma\Gamma^{-1}),
  \]
  as $n\to\infty$
  for some nonnegative definite matrix $\Sigma$.
\end{theorem}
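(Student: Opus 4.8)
The plan is to follow the standard Taylor-expansion route for $M$-estimators, but with care about the fact that $\wih\theta_n$ maximizes $\bbH_n$ (equivalently $\bbY_n$) over the \emph{closed} set $\ol\Theta$ while differentiation is only available on the open set $\Theta$. By Theorem \ref{thm_consistency}, $\wih\theta_n\to^p\theta^*\in\Theta$, so on an event of probability tending to $1$ the estimator lies in an open ball around $\theta^*$ contained in $\Theta$; on that event the first-order condition $\pt\bbH_n(\wih\theta_n)=0$ holds, and I may freely Taylor-expand. Writing $\pt\bbH_n(\wih\theta_n)=\pt\bbH_n(\theta^*)+\int_0^1\pt^2\bbH_n(\theta^*+s(\wih\theta_n-\theta^*))ds\,(\wih\theta_n-\theta^*)$, the argument reduces to three ingredients: (a) a central limit theorem $\tfrac1{\sqrt{a_n}}\pt\bbH_n(\theta^*)\to N(0,\Sigma)$ for some nonnegative definite $\Sigma$; (b) a locally uniform law of large numbers $\sup_{|\theta-\theta^*|\le\epsilon}\bigl|\tfrac1{a_n}\pt^2\bbH_n(\theta)-(-\Gamma)\bigr|\to^p0$, so that the integral of the Hessian converges to $-\Gamma$; and (c) the invertibility of $\Gamma$, which is [ID2]. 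Combining these via Slutsky gives $\sqrt{a_n}(\wih\theta_n-\theta^*)=\Gamma^{-1}\cdot\tfrac1{\sqrt{a_n}}\pt\bbH_n(\theta^*)+o_p(1)\to N(0,\Gamma^{-1}\Sigma\Gamma^{-1})$.

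For step (b), I would compute $\pt^2\bbH_n(\theta)=F_{2,n}^N(\pt^2\log g(\cdot;\theta))-a_n\wih\lambda_{1,n}\wih\lambda_{2,n}\int_{|u|\le r}\pt^2 g(u;\theta)du$; dividing by $a_n$, Lemma \ref{lem_ergodic_second} handles the first term pointwise, Lemma \ref{lem_consistency_intensity} the prefactor, and [RE](iii)–(iv) (the integrable envelopes $f_{B,1},f_{B,2}$, which also dominate the \emph{third} derivatives) give equicontinuity in $\theta$ and hence the locally uniform convergence to $-\Gamma$, after simplifying $\int\pt^2\log g\cdot g(\cdot;\theta^*)du-\int\pt^2 g(\cdot;\theta^*)du=-\int\pt g(\cdot;\theta^*)^{\otimes2}/g(\cdot;\theta^*)du=-\Gamma$ using $\pt^2\log g=\pt^2 g/g-(\pt g/g)^{\otimes2}$ and $\int\pt^2 g(\cdot;\theta^*)du=\pt^2\int g(\cdot;\theta^*)du$ together with the fact that at $\theta^*$ the score has mean zero (the inner edge correction). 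The $C^3$ envelope is exactly what makes the third-order remainder in the Taylor expansion of $\pt^2\bbH_n$ negligible on the shrinking neighborhood of $\theta^*$.

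Step (a) is the main obstacle. The score, recentered, is
\[
  \tfrac1{\sqrt{a_n}}\pt\bbH_n(\theta^*)=\tfrac1{\sqrt{a_n}}\Bigl(F_{2,n}^N(\pt\log g(\cdot;\theta^*))-a_n\wih\lambda_{1,n}\wih\lambda_{2,n}\int_{|u|\le r}\pt g(u;\theta^*)du\Bigr),
\]
and because $\wih\lambda_{i,n}$ themselves are sums over the same window, this is not a simple sum of stationary terms: one must first expand $\wih\lambda_{1,n}\wih\lambda_{2,n}$ around $\lambda_1^*\lambda_2^*$, collect the first-order fluctuations $\tfrac1{\sqrt{a_n}}(N_i(W_n\ominus r)-a_n\lambda_i^*)$, and thereby rewrite the whole object (up to $o_p(1)$, using $\sqrt{a_n}(\wih\lambda_{i,n}-\lambda_i^*)=O_p(1)$ and the fact that $\int\pt g(u;\theta^*)g(u;\theta^*)^{-1}\cdot g(u;\theta^*)du$ equals the mean of the $F_2$ term, so the leading pieces combine) as $\tfrac1{\sqrt{a_n}}\sum_{l\in D_n}\xi_l+o_p(1)$ for a centered, stationary, $r$-range-dependent random field $\{\xi_l\}$ built from $N\cap(C(l)\oplus r)$. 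One then applies a CLT for $\alpha$-mixing random fields under the mixing rate $\sum_m\alpha_{2,\infty}^N(m;r)^{\delta/(2+\delta)}<\infty$ and the $(2+\delta)$-moment bounds in [MI] (via (\ref{eq_note_on_mixing1}) to transfer mixing from $N$ to $\xi$), using [WI2] so that $D_n$ is an interval and boundary effects from passing between $W_n\ominus r$ and $\bigsqcup_{l\in D_n}C(l)$ are $o(\sqrt{a_n})$; the Rosenthal-type bound Theorem \ref{mixing_moment_ineq} controls the variance and gives the Lindeberg/uniform-integrability input. The limiting covariance $\Sigma=\lim_n\tfrac1{a_n}\mathrm{Var}(\sum_{l\in D_n}\xi_l)$ is automatically nonnegative definite, which is all that is claimed. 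The delicate points are the exact bookkeeping of the intensity-plug-in correction (to identify the correct summand $\xi_l$ and, in particular, to see which terms cancel) and verifying the hypotheses of the mixing CLT from [MI]; everything else is routine given the lemmas already established.
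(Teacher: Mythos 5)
Your proposal is correct and follows essentially the same route as the paper: consistency plus the first-order condition, a CLT for the score obtained by splitting off the intensity plug-in fluctuation and applying a mixing CLT to the lattice field $(N_1(C(l))-\lambda_1^*,\,N_2(C(l))-\lambda_2^*,\,Z_l)$ with the delta method (the paper's Proposition \ref{lem_score_convergence} via Theorem \ref{thm_bolthausen_clt}), convergence of the Hessian at $\theta^*$ to $-\Gamma$ (Lemma \ref{lem_observed_information}), and control of the intermediate-point Hessian through the uniform $O_p(1)$ bound on the third derivative (Lemma \ref{lem_residual}), which is exactly the third-order remainder control you describe.
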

\halflineskip
The proofs will be given in Section \ref{subsec_prf_general}.

\begin{remark} \rm
    Note that the condition [WI2] is stronger than [WI].
  Also, even if the condition [WI2] is not satisfied, we can still derive the convergence of the distribution as
  \(
  \sqrt{a_n}\Sigma_n^{-\frac{1}{2}}\Gamma(\wih{\theta}_n - \theta^*) \to^d N(0, 1_p),
  \)
  when $\liminf_{n}\lambda_{\min}(\Sigma_n) > 0$. See Biscio \& Waagepetersen~\cite{biscio2019general} for more information.
  However, investigating $\Sigma_n$ is not easy when the observation window $W_n$ has a complicated shape.
\end{remark}
\section{Noisy bivariate Neyman-Scott process}\label{sec_NS_model}
In this section, we first introduce the bivariate Neyman-Scott point process and calculate its various characteristic quantities.
Next, we introduce the noisy bivariate Neyman-Scott point process (NBNSP) and identify the parametric structure.
Finally, under appropriate assumptions, we prove that the asymptotic theory from the previous sections can be applied to our model NBNSP.

\subsection{The bivariate Neyman-Scott process}
The bivariate Neyman-Scott process $(N_1^S, N_2^S)$ is constructed as follows.
First, suppose $\calc$ is a Poisson process with intensity $\lambda>0$, called parent below.
For $i=1, 2$, each parent points $c$ yields a random number $M_i(c)$ of offspring points which are realized independently and identically from the distribution of a $\bbZ_{\geq 0}$-valued random variable $M_i$ with the probability generating function $g_i(\cdot)$.
The offspring points from a parent point $c$ are independently and identically distributed around $c$ according to the probability density function $f_i(\cdot - c)$, where $f_i(\cdot)$ is a probability density function, which is sometimes called a dispersal kernel.
We denote the realization of the offspring point by $c + d_j(c, m)$.
Then the $i$-th component is given by
\begin{equation}\label{eq_our_model_signal}
  N_i^S =  \sum_{c \in \calc} \sum_{m = 1}^{M_i(c)} \delta_{c + d_i(c, m)}
  =: \sum_{c \in \calc} N_{i, c}^S.
\end{equation}
for $i=1, 2$.

In the context of lead-lag relationships in financial engineering, we try to model the ``trigger'' of co-occurrence of orders in the same direction for two assets by the parent $\calc$, and the statistical differences in the response speed to the ``trigger'' by the dispersal kernels $f_1$ and $f_2$.

Let $N^S=(N_i^S)_{i=1}^2$ be the bivariate Neyman-Scott process.
Suppose that $h_i: \bbR \to (0, 1]$ is measurable function such that $\mathrm{supp}(1-h_i)$ is bounded for $i=1, 2$. The probability generating functional of the bivariate Neyman-Scott process is given by
\begin{align*}
  G(h_1, h_2) =
   & \quad E\Bigl[\exp\Bigl(\sum_{i=1}^2 \int_{\bbR} \log{h_i}dN_i^S\Bigr)\Bigr] \\
   & = E\Bigl[\exp\Bigl(
    \sum_{c\in\calc}\sum_{i=1}^2 \int_{\bbR} \log{h_i}dN_{i, c}^S
  \Bigr)\Bigr]                                                                   \\
   & = E\Bigl[\prod_{c\in\calc} \prod_{i=1}^2
    E[\exp\Bigl(\int_{\bbR} \log{h_i}dN_{i, c}^S\Bigr) | \calc]
  \Bigr] \quad (\because \mathrm{conditional\ independence})                     \\
   & = E\Biggl[\prod_{c\in\calc} \prod_{i=1}^2
    E\Bigl[\prod_{m=1}^{M_i(c)}
      E[h_i(c+d_i(c, m)) | M_i(c), \calc]
      \Big|\calc\Bigr]
  \Biggr]                                                                        \\
   & =E\Biggl[\prod_{c\in\calc} \prod_{i=1}^2
    E\Bigl[
      \Bigl(\int_{\bbR}h_i(c+u_i)f_i(u_i)du_i \Bigr)^{M_i(c)}
      \Big|\calc\Bigr]
  \Biggr]                                                                        \\
   & =E\Bigl[\prod_{c\in\calc}
    \prod_{i=1}^2 g_i\Bigl(
    \int_{\bbR}h_i(c+u_i)f_i(u_i)du_i
    \Bigr)
  \Bigr]                                                                         \\
   & =\exp\Biggl(\int_{\bbR}
  \lambda \Bigl\{\prod_{i=1}^2 g_i\Bigl(
  \int_{\bbR}h_i(c+u_i)f_i(u_i)du_i
  \Bigr)
  -1
  \Bigr\}dc
  \Biggr).
\end{align*}
Especially we find
\[
  E\Bigl[\exp\Bigl(\sum_{i=1}^2 \int_{\bbR} \log{h_i(x)}N_i^S(dx)\Bigr)\Bigr]
  = E\Bigl[\exp\Bigl(\sum_{i=1}^2 \int_{\bbR} \log{h_i(x+u)}N_i^S(dx)\Bigr)\Bigr]
\]
for all $u\in\bbR$.
This equation implies the stationarity of the multivariate Neyman-Scott process.

We can verify the density of the factorial cumulant measure (with respect to the Lebesgue measure) is
\begin{equation}\label{eq_cumulant_NS}
  \gamma_{[n_1, n_2]}(x_{1,1}, \ldots, x_{1,n_1}, x_{2,1}, \ldots, x_{2,n_2})
  = \lambda \int_{\bbR} \prod_{i=1}^2 \Bigl(g_i^{(n_i)}(1-) f_i(x_{i,j}-c)\Bigr)dc,
\end{equation}
using the relation (\ref{eq_cumulant_general_formula}) as in Jolivet~\cite{jolivet1981central}
with assuming the existence of $g_i^{(n_i)}(1-)$, which is the $n$-th factorial moment of $M_i$. 
(The existence of the factorial moment measure itself will be dealt with in Lemma \ref{lem_ns_existence_of_moment}.)
We can also calculate densities of the factorial moment measures by using the relation (\ref{fmomt2fcum}) as follows.

\begin{description}
  \item[First-order moment]
        By stationarity, the density of the first-order (factorial) moment measure of $N_i^S$ (called intensity hereafter) is given by constant $\lambda \sigma_i$, where $\sigma_i = g_i^{(1)}(1-)$ is the first-order moment of the number of the offsprings for each parent point, $i=1, 2$.
  \item[Second-order moment]
        The density of the $(1, 1)$-th order factorial moment measure $M_{[1, 1]}^S$ of $(N_1^S, N_2^S)$ (called cross-intensity hereafter) is given by
        \[
          \lambda_{[1, 1]}^S(x, y)
          =\lambda^2 \sigma_1\sigma_2
          + \lambda \sigma_1\sigma_2
          \int_{\bbR}f_1(x-c)f_2(y-c)dc.
        \]
        Thus, the cross-intensity function is
        \[
          \lambda_{[1, 1]}^S(u)
          =\lambda^2 \sigma_1\sigma_2
          + \lambda \sigma_1\sigma_2
          \int_{\bbR}f_1(s)f_2(u+s)ds.
        \]
\end{description}

\subsection{Noisy bivariate Neyman-Scott process}
We finalize our model by adding independent noises whose structures are unknown.
Let $N^S=(N_1^S, N_2^S)$ be a bivariate Neyman-Scott process introduced in the previous subsection.
In addition, suppose $N^B = (N_1^B, N_2^B)$ is a bivariate stationary point process such that $N_1^B, N_2^B$, and $N^S$ are independent.
The intensity of $N_i$ is denoted by $\lambda_i^B\geq 0$, $i=1, 2$.
The noisy bivariate Neyman-Scott process is constructed by superposing $N^B$ to $N^S$ as noise, i.e.
$N = (N_1, N_2) = (N_1^S+N_1^B, N_2^S+N_2^B)$.
The intensity of $N$ is
\[
  \lambda_i = E[N_i([0, 1])] = \lambda\sigma_i + \lambda_i^B, \quad i=1, 2.
\]
The cross-intensity of the stationary process $N=(N_1, N_2)$ is
\[
  \lambda_{[1, 1]}(u) = (\lambda\sigma_1 + \lambda_1^B)(\lambda\sigma_2 + \lambda_2^B)
  + \lambda\sigma_1\sigma_2\int_{\bbR}f_1(s)f_2(s+u)ds
\]
because, for a bounded Borel function $f$, we have
\begin{align*}
   & \quad E\Bigl[\sum_{x\in N_1, y\in N_2} f(x, y)\Bigr] \\
   & = E\Bigl[
    \sum_{x\in N_1^S, y\in N_2^S} f(x, y)
    + \sum_{x\in N_1^S, y\in N_2^B} f(x, y)
    + \sum_{x\in N_1^B, y\in N_2^S} f(x, y)
    + \sum_{x\in N_1^B, y\in N_2^B} f(x, y)
  \Bigr]                                                  \\
   & = \int_{\bbR^2}f(x, y)\lambda_{[1, 1]}^S(x, y)dxdy
  + (\lambda\sigma_1\lambda_2^B + \lambda\sigma_2\lambda_1^B + \lambda_1^B\lambda_2^B)
  \int_{\bbR^2}f(x, y)dxdy                                \\
   & = \int_{\bbR^2}\Bigl\{
  (\lambda\sigma_1 + \lambda_1^B)(\lambda\sigma_2 + \lambda_2^B)
  + \lambda\sigma_1\sigma_2\int_{\bbR}f_1(x-c)f_2(y-c)dc
  \Bigr\}
  f(x, y)dxdy.
\end{align*}

Hereafter, we parametrize the dispersal kernels by the parameters $(\tau_1, \tau_2)$ as $f_i(\cdot; \tau_i), i=1, 2$.
Therefore, the cross-correlation function of $N$ is
\begin{equation}\label{eq_ns_cross_corr}
  g(u; \theta)
  = 1 + a \int_{\bbR} f_1(s; \tau_1)f_2(u+s; \tau_2)ds,
\end{equation}
where $a = \frac{\lambda\sigma_1\sigma_2}{(\lambda\sigma_1 + \lambda_1^B)(\lambda\sigma_2 + \lambda_2^B)}$.
We parametrize the entire model with $\theta = (a, \tau_1, \tau_2) \in \ol{\Theta}$, where $\Theta =  \cala\times \calt_1\times \calt_2$, $\cala\subset(0, \infty)$, $\calt_1\subset \bbR^{\sfp_1}$, $\calt_2\subset \bbR^{\sfp_2}$, and $\sfp_1, \sfp_2\in\bbZ_{\geq 1}$.
In the following, we call this model the noisy bivariate Neyman-Scott process (NBNSP).

\subsection{Statistical inference for noisy bivariate Neyman-Scott processes}
Let $N = (N_1, N_2) = (N_1^S+N_1^B, N_2^S+N_2^B)$ be the noisy bivariate Neyman-Scott process model introduced in the previous subsection. We discuss the asymptotic properties of the model based on the general theory in Section \ref{sec_general_theory}.
We will impose some conditions below.
\begin{description}
  \item{\bf{[NS]}}
        \begin{description}
          \item{(i)}
                $\cala$, $\calt_1$, and $\calt_2$ are bounded, open, and convex.
          \item{(ii)}
                For $i=1, 2$, the dispersal kernel $f_i$ has a form
                \begin{equation}\label{eq_ns_cond_1}
                  f_i(u; \tau_i) =
                  h_{i, 1}(u; \tau_i)u^{h_{i, 2}(\tau_i) - 1}1_{(0, 1)}(u)
                  + h_{i, 3}(u; \tau_i)1_{[1, \infty)}(u), \ i=1, 2,
                \end{equation}
                where, for $i=1, 2$,
                $h_{i, 1}$ is bounded measurable function on $(0, 1)\times \ol{\calt_i}$,
                $h_{i, 1}(u; \cdot)\in C(\ol{\calt_i}) \cap C^3(\calt_i)$ for all $u\in(0, 1)$,
                \(
                \inf_{|u|<1, \tau_i\in\ol{\calt_i}}h_{i, 1}(u; \tau_i) > 0,
                \)
                $\partial_{\tau_i}^k h_{i, 1}$ is bounded on $(0, 1) \times \calt_i$ for $k=1, 2, 3$,
                $h_{i, 2}\in C^3(\ol{\calt_i})$,
                \(\inf_{\tau_i\in\ol{\calt_i}}h_{i, 2}(\tau_i) > 0\),
                $h_{i, 3}$ is bounded measurable function on $[1, \infty)\times \ol{\calt_i}$,
                $h_{i, 3}(u; \cdot)\in C(\ol{\calt_i}) \cap C^3(\calt_i)$ for all $u\in [1, \infty)$,
                and there exists bounded $\tilde{f} \in L^1([1, \infty))$ (common to $i=1, 2$) such that
                \begin{equation}\label{eq_ns_assumption1}
                  \sup_{\tau_i}|\ptaui^k h_{i, 3}(\cdot; \tau_i)| \leq \tilde{f}
                \end{equation}
                for $k=0, 1, 2, 3$, where the supremum is taken on $\ol{\calt_i}$ for $k=0$ and on $\calt_i$ for $k=1, 2, 3$.
          \item{(iii)}
                There exist some $\delta>0$ and $\epsilon>0$ such that
                \begin{gather}
                  \|M_i\|_{\lceil 2+\delta \rceil} < \infty, \quad i=1, 2 , \label{gat_ns_cond_1}
                \end{gather}
                \begin{equation}\label{eq_ns_cond_2}
                  \int_{|u|>R}f_i(u; \tau_i)du
                  = O\!\left(R^{-\frac{2+\delta}{\delta(2+\epsilon)}}\right) \text{ as $R\to\infty$}, \quad \tau_i\in \calt_i, i=1, 2.
                \end{equation}
          \item{(iv)}
                For $\delta>0$ appearing in [NS](iii),
                \begin{equation}
                  \|N_i^B([0, 1])\|_{\lceil 2+\delta \rceil} < \infty, \quad i=1, 2,\label{gat_ns_cond_2}
                \end{equation}
                and
                \begin{equation}\label{eq_ns_cond_3}
                  \sum_{m=1}^{\infty}\alpha_{2(1+r), \infty}^{N_i^B}(m)^{\frac{\delta}{2+\delta}} < \infty, \quad i=1, 2.
                \end{equation}
                Moreover, $N_i^B$ has locally bounded factorial cumulant densities up to
                $\lceil2+\delta\rceil$-th order.
        \end{description}
\end{description}

\begin{remark}\label{rem_mom_NS}\rm
  Under [NS](ii), $f_i(\cdot; \tau_i)$ is in $L^q(\bbR)$ for some $q>1$.
\end{remark}
\halflineskip

We now state our main result for this section.
\begin{theorem}\label{thm_noisy_NS_asymptotics}
  Suppose that the bivariate noisy Neyman-Scott process model satisfies the condition [NS].
  Then, [PA], [MI], and [RE] hold. If we further assume [WI2], [ID], and [ID2], then the assumptions in Theorem \ref{thm_consistency} and Theorem \ref{thm_asymptotic_normality} hold so that the QMLE defined in (\ref{eq_def_QMLE}) has consistency and asymptotic normality.
\end{theorem}
\halflineskip

The proof of Theorem \ref{thm_noisy_NS_asymptotics} will be given in Section \ref{subsec_proof_thm_noisy_symptotics}.
Here we discuss the conditions.
The condition [ID] will be discussed for specific models in Section \ref{sec_specific_models}.
The condition [NS](ii) looks a bit complicated, but, roughly speaking, it requires that $f_i$ is $O(u^{\alpha-1}), u\to0$ for some $\alpha>0$ and the parameter derivatives of $f_i$ have suitable integrability.
A lot of popular parametric classes of positive distributions can be written in the form of (\ref{eq_ns_cond_1}).
For example, the exponential kernel
\(
f_i(u; l_i) = l_i e^{-l_iu}1_{(0, \infty)}(u), l_i > 0
\)
and gamma kernel
\(
f_i(u; \alpha_i, l_i) = \frac{l_i^{\alpha_i}}{\Gamma(\alpha_i)}u^{\alpha_i-1}e^{-l_i u} 1_{(0, \infty)}(u), l_i, \alpha_i > 0
\)
satisfy these conditions. Of course, if one does not need the divergence at the origin, just take $h_{i, 2} \equiv 1$. It is also possible that $f_i$ has some singular points besides the origin, but we do not pursue it here.
The condition (\ref{gat_ns_cond_1}) is the existence of higher-order moments of the number of offspring of the Neyman-Scott process.
We note that the distribution of $M_i$ is a nuisance parameter and only the mean $\sigma_i$ of $M_i$ is related to the parameter $a = \frac{\lambda\sigma_1\sigma_2}{(\lambda\sigma_1 + \lambda_1^B)(\lambda\sigma_2 + \lambda_2^B)}$ in our setting. If one wants to estimate $\sigma_i$ itself, one could assume some parametric structures for the noise processes and estimate all the parameters using additional methods.
For example, one may apply the adaptive estimation procedure based on the nearest-neighbor distance property, as in Tanaka \& Ogata~\cite{tanaka2014identification}, if the noise processes are also univariate Neyman-Scott processes.
The condition (\ref{eq_ns_cond_2}) requires the fast decay of the tail of the dispersal kernel, which ensures the fast decay of the $\alpha$-mixing rate of the Neyman-Scott process. In particular, the kernels that have exponential decay, such as the exponential and gamma kernels, satisfy (\ref{eq_ns_cond_2}).
Regarding (\ref{gat_ns_cond_2}) and (\ref{eq_ns_cond_3}), it is just the existence of higher-order locally finite moment measures and the fast decay of the $\alpha$-mixing rate of the noise processes that hold for many point processes.
In practice, the choice of the (stationary) Poisson noise may be reasonable to some extent because the law of independent superposition of many point processes converges to the law of the Poisson process.
For another example, the stationary Hawkes process with an exponential kernel, a basic model for high-frequency financial data (see, for example, \cite{bacry2015hawkes}), satisfies the assumption for the noise processes.
Indeed, we have the mixing condition (\ref{eq_ns_cond_3}) by Theorem 1 in Cheysson \& Lang \cite{cheysson2022spectral}.
Moreover, one can prove that the stationary Hawkes process with a bounded kernel has bounded factorial cumulant densities for all orders, thanks to the expression in Jovanovi{\'c} et al. \cite{jovanovic2015cumulants}. For details, see Appendix \ref{sec:appendix_a}. It also guarantees the existence of the moments themselves. Therefore, we have all of the conditions in [NS](iv) for the Hawkes process.


\section{Specific models}\label{sec_specific_models}
In this section, we will consider NBNSP with gamma kernels (NBNSP-G) and exponential kernels (NBNSP-E) and give the asymptotic theory for these models based on the results in Section \ref{sec_NS_model}.


\subsection{Noisy bivariate Neyman-Scott process with gamma kernels (NBNSP-G)}\label{subsec_nbnsp-g}
For the sake of modeling lead-lag relationships in high-frequency financial data,
we suggest gamma kernels
because the divergence at the origin represents rapid responses of algorithm trades to the common ``trigger'' (modeled by the parent process $\calc$) between two assets.

Let
\[
  f_i(u; \alpha_i, l_i) = \frac{l_i^{\alpha_i}}{\Gamma(\alpha_i)}u^{\alpha_i-1}e^{-l_i u} 1_{(0, \infty)}(u),
  \quad l_i, \alpha_i > 0, i=1, 2
\]
in NBNSP.
The parameters to be estimated are $(a, \alpha_1, \alpha_2, l_1, l_2) \in \Theta = \prod_{k=1}^5(p_{k,1}, p_{k, 2})$ where
$0<p_{k,1}<p_{k, 2}<\infty$ for $k=1, \ldots, 5$.
As shown in (\ref{eq_ns_cross_corr}), The cross-correlation function is
\begin{equation}\label{eq_gamma_ns_ccf}
  g(u; \theta) = 1 + a p(u; \alpha_1, \alpha_2, l_1, l_2)
\end{equation}
where
\[
  p(u; \alpha_1, \alpha_2, l_1, l_2) = \int_{\bbR} f_1(s; \alpha_1, l_1)f_2(u+s; \alpha_2, l_2)ds.
\]
For clarity, we summarize the estimation procedure.
Suppose the data is observed as counting measures $N = (N_1, N_2)$ on $[0, T]$.
The quasi-log likelihood function is
\begin{equation}\label{eq_summarize_estimation_gamma_ns}
  \bbH(\theta)
  = \sum_{\substack{x\in N_1, y\in N_2 \\ |y-x|\leq r \\ x\in [r, T-r]}}
  \Bigl(
  \log(g(y-x; \theta)) + \log\wih{\lambda}_1 + \log\wih{\lambda}_2
  \Bigr)
  - (T-2r)
  \wih{\lambda}_1\wih{\lambda}_2\int_{|u|\leq r}g(u; \theta)du,
\end{equation}
where
$\wih{\lambda}_i = T^{-1}N_i([0, T]), i=1, 2$.
Then we derive the estimator (QMLE) $\wih{\theta}_T$ by maximizing $\bbH(\theta)$.

In fact, $p(u; \alpha_1, \alpha_2, l_1, l_2)$ in (\ref{eq_gamma_ns_ccf}) is the probability density function of the bilateral gamma distribution introduced by K{\"u}chler \& Tappe~\cite{kuchler2008bilateral}.
By the definition, we have
\begin{equation}\label{eq_bigamma_integral_expression}
  p(u; \alpha_1, \alpha_2, l_1, l_2)
  =\frac{l_1^{\alpha_1} l_2^{\alpha_2}}
  {(l_1+l_2)^{\alpha_1}\Gamma(\alpha_1)\Gamma(\alpha_2)}
  e^{-l_1u}
  \int_0^{\infty} v^{\alpha_2-1}\Bigl(
  u + \frac{v}{l_1+l_2}
  \Bigr)^{\alpha_1-1}
  e^{-v}dv, \quad u>0.
\end{equation}
According to K{\"u}chler \& Tappe~\cite{kuchler2008shapes},
it can also be expressed using the confluent hypergeometric function $\Phi(\gamma, \delta; z)$:
\begin{align*}
  p(u; \alpha_1, \alpha_2, l_1, l_2)
   & = \frac{l_1^{\alpha_1} l_2^{\alpha_2} \Gamma(1-(\alpha_1+\alpha_2))}{\Gamma(\alpha_1)\Gamma(1-\alpha_1)} \\
   & \quad \times u^{(\alpha_1+\alpha_2 - 1)}
  e^{-l_1u}
  \Phi\bigl(\alpha_2, \alpha_1+\alpha_2, (l_1+l_2)u\bigr)                                                     \\
   & \quad +
  \frac{l_1^{\alpha_1} l_2^{\alpha_2} \Gamma(\alpha_1+\alpha_2-1)}{\Gamma(\alpha_1)\Gamma(\alpha_2)}          \\
   & \quad \times
  (l_1+l_2)^{1-(\alpha_1+\alpha_2)}
  e^{-l_1u}\Phi\bigl(1-\alpha_1, 2-(\alpha_1+\alpha_2), (l_1+l_2)u\bigr)
\end{align*}
for $u>0$
where
\[
  \Phi(\gamma, \delta; z)
  = 1 + \frac{\gamma}{\delta} \frac{z}{1!}
  + \frac{\gamma(\gamma + 1)}{\delta(\delta + 1)} \frac{z^2}{2!}
  + \frac{\gamma(\gamma + 1)(\gamma + 2)}{\delta(\delta + 1)(\delta + 2)} \frac{z^3}{3!}
  + \cdots.
\]
For $u<0$, we can obtain similar formulae using the relation
\begin{equation}\label{eq_bigamma_symmetry}
  p(u; \alpha_1, \alpha_2, l_1, l_2)
  = p(-u; \alpha_2, \alpha_1, l_2, l_1).
\end{equation}
Using these analytical representations, we can avoid calculating numerical integration so many times in the first term of the RHS of (\ref{eq_summarize_estimation_gamma_ns}).

We can derive the asymptotic property of the QMLE for NBNSP-G using Theorem \ref{thm_noisy_NS_asymptotics}.
\begin{theorem}\label{thm_NBNSP_G}
  Assume that the conditions [NS](i), (iii)-(\ref{gat_ns_cond_1}), (iv), and [WI2] hold.
  Let $\theta^*$ be the true value of the parameter.
  Then the QMLE for the NBNSP-G has the consistency and the asymptotic normality,  i.e. we have
  \[
    \wih{\theta}_{T_n} \to^p \theta^*
  \]
  and
  \[
    \sqrt{T_n}(\wih{\theta}_{T_n} - \theta^*) \to^d N(0, \Gamma^{-1}\Sigma\Gamma^{-1})
  \]
  for some nonnegative matrix $\Sigma$.
\end{theorem}
\halflineskip
The proof will be given in Section \ref{subsec_prf_NBNSP}.

\subsection{Noisy bivariate Neyman-Scott process with exponential kernels (NBNSP-E)}
Let
\[
  f_i(u; l_i) = l_i e^{-l_i u} 1_{(0, \infty)}(u),
  \quad l_i > 0, i=1, 2.
\]
Then the cross-correlation function is
\[
  g(u; \theta) = 1 + a q(u; l_1, l_2)
\]
where
\[
  q(u; l_1, l_2)
  = \frac{l_1l_2}{l_1+l_2} \Bigl(
  1_{(0, \infty)} (u) e^{-l_2u} + 1_{(-\infty, 0)} (u) e^{-l_1u}
  \Bigr).
\]
The parameters to be estimated are $\theta = (a, l_1, l_2)$.

\begin{theorem}\label{thm_NBNSP_E}
  Assume that the conditions [NS](i), (iii)-(\ref{gat_ns_cond_1}), (iv), and [WI2] hold.
  Then the QMLE for the exponential kernel model has consistency and asymptotic normality.
\end{theorem}

The proof will also be given in Section \ref{subsec_prf_NBNSP}.
\section{Simulation studies}\label{sec_simulation}
The performance of the QMLE for the NBNSP-G is investigated by simulations in various settings.
The number of replications in each Monte Carlo simulation is 500.
The optimization is conducted by the Nelder-Mead algorithm of the Python package Scipy.
We report the mean and the standard deviation (std) of each component of the QMLE.
Table \ref{table_expr1} shows the consistency of the QMLE in the absence of any noise.
Table \ref{table_expr2} shows the consistency of the QMLE when the data is contaminated by homogenous Poisson noise with the same intensities as the signal process $N_i^S$.
We observe that the parameter standard deviations are larger than in the noiseless version.
Table \ref{table_expr3} shows that as the amount of noise increases, both the bias and standard deviation of the QMLE also increase.
Table \ref{table_expr4} shows that a bigger $r$ decreases the standard deviation of the estimator, but it demands more computational time.
Specifically, a single evaluation of the quasi‑likelihood costs the order of $T\lambda_1\lambda_2 \int_{|u|\leq r}g(u)du$.
We have $g\geq 1$ for the NBNSP models, so that the cost grows at least linearly with
$r$, i.e., $T\lambda_1\lambda_2 \int_{|u|\leq r}g(u)du\geq 2T\lambda_1\lambda_2r$.

\begin{table}[h]
  \centering
  \begin{tabular}{llrrrrr}
    \toprule
                              &      & $a$   & $\alpha_1$ & $\alpha_2$ & $l_1$ & $l_2$ \\
    $T$                       &      &       &            &            &       &       \\
    \midrule
    \multirow[t]{2}{*}{2500}  & mean & 10.3  & 0.305      & 0.402      & 1.06  & 1.03  \\
                              & std  & 1.28  & 0.0329     & 0.0326     & 0.389 & 0.297 \\
    \cline{1-7}
    \multirow[t]{2}{*}{5000}  & mean & 10.2  & 0.301      & 0.402      & 1.01  & 1.02  \\
                              & std  & 0.852 & 0.0219     & 0.0227     & 0.267 & 0.211 \\
    \cline{1-7}
    \multirow[t]{2}{*}{10000} & mean & 10.1  & 0.301      & 0.4        & 1.01  & 1.01  \\
                              & std  & 0.564 & 0.0169     & 0.0164     & 0.191 & 0.15  \\
    \cline{1-7}
    true                      &      & 10    & 0.3        & 0.4        & 1     & 1     \\
    \cline{1-7}
    \bottomrule
  \end{tabular}
  \caption{Means and standard deviations of the estimator for the parameters of NBNSP-G. Settings: $r=1.0, M_1\sim \mathrm{Poi}(2), M_2\sim \mathrm{Poi}(4), \lambda=0.1$, without noise.}
  \label{table_expr1}
\end{table}

\begin{table}[h]
  \centering
  \begin{tabular}{llrrrrr}
    \toprule
                              &      & $a$   & $\alpha_1$ & $\alpha_2$ & $l_1$ & $l_2$ \\
    T                         &      &       &            &            &       &       \\
    \midrule
    \multirow[t]{2}{*}{2500}  & mean & 2.58  & 0.305      & 0.403      & 1.07  & 1.03  \\
                              & std  & 0.439 & 0.035      & 0.0357     & 0.462 & 0.369 \\
    \cline{1-7}
    \multirow[t]{2}{*}{5000}  & mean & 2.54  & 0.301      & 0.402      & 1.01  & 1.03  \\
                              & std  & 0.227 & 0.023      & 0.0244     & 0.308 & 0.249 \\
    \cline{1-7}
    \multirow[t]{2}{*}{10000} & mean & 2.51  & 0.301      & 0.401      & 1.01  & 1.02  \\
                              & std  & 0.158 & 0.0179     & 0.018      & 0.226 & 0.177 \\
    \cline{1-7}
    true                      &      & 2.5   & 0.3        & 0.4        & 1     & 1     \\
    \cline{1-7}
    \bottomrule
  \end{tabular}
  \caption{Means and standard deviations of the estimator for the parameters of NBNSP-G. Settings: $r=1.0, M_1\sim \mathrm{Poi}(2), M_2\sim \mathrm{Poi}(4), \lambda=0.1$, $N_i^B$ is a homogenous Poisson process, $E[N_i^B([0, 1])] = E[N_i^S([0, 1])] \times 1.0, i=1, 2$.}
  \label{table_expr2}
\end{table}

\begin{table}[h]
  \centering
  \begin{tabular}{llrrrrr}
    \toprule
                           &      & $a$    & $\alpha_1$ & $\alpha_2$ & $l_1$ & $l_2$ \\
    SN Coef                &      &        &            &            &       &       \\
    \midrule
    \multirow[t]{3}{*}{0}  & mean & 10.2   & 0.301      & 0.402      & 1.01  & 1.02  \\
                           & std  & 0.852  & 0.0219     & 0.0227     & 0.267 & 0.211 \\
                           & true & 10     & 0.3        & 0.4        & 1     & 1     \\
    \cline{1-7}
    \multirow[t]{3}{*}{5}  & mean & 0.295  & 0.304      & 0.402      & 1.08  & 1.03  \\
                           & std  & 0.0599 & 0.0325     & 0.035      & 0.541 & 0.407 \\
                           & true & 0.278  & 0.3        & 0.4        & 1     & 1     \\
    \cline{1-7}
    \multirow[t]{3}{*}{10} & mean & 0.109  & 0.305      & 0.407      & 1.15  & 1.12  \\
                           & std  & 0.0721 & 0.0455     & 0.0482     & 0.932 & 0.654 \\
                           & true & 0.0826 & 0.3        & 0.4        & 1     & 1     \\
    \cline{1-7}
    \bottomrule
  \end{tabular}
  \caption{Means and standard deviations of the estimator for the parameters of NBNSP-G. Settings: $T=5000$, $r=1.0, M_1\sim \mathrm{Poi}(2), M_2\sim \mathrm{Poi}(4), \lambda=0.1$, $N_i^B$ is a homogenous Poisson process, $E[N_i^B([0, 1])] = E[N_i^S([0, 1])] \times (\mathrm{SN \ Coef}), i=1, 2$.}
  \label{table_expr3}
\end{table}

\begin{table}[h]
  \centering
  \begin{tabular}{llrrrrr}
    \toprule
                            &      & $a$   & $\alpha_1$ & $\alpha_2$ & $l_1$ & $l_2$ \\
    $r$                     &      &       &            &            &       &       \\
    \midrule
    \multirow[t]{2}{*}{0.5} & mean & 2.69  & 0.301      & 0.401      & 1.02  & 1.02  \\
                            & std  & 0.6   & 0.027      & 0.0269     & 0.464 & 0.383 \\
    \cline{1-7}
    \multirow[t]{2}{*}{1}   & mean & 2.54  & 0.301      & 0.402      & 1.01  & 1.03  \\
                            & std  & 0.227 & 0.023      & 0.0244     & 0.308 & 0.249 \\
    \cline{1-7}
    \multirow[t]{2}{*}{2}   & mean & 2.52  & 0.303      & 0.402      & 1.03  & 1.03  \\
                            & std  & 0.207 & 0.0227     & 0.0232     & 0.272 & 0.212 \\
    \cline{1-7}
    true                    &      & 2.5   & 0.3        & 0.4        & 1     & 1     \\
    \cline{1-7}
    \bottomrule
  \end{tabular}
  \caption{Means and standard deviations of the estimator for the parameters of NBNSP-G. Settings: $T=5000$, $M_1\sim \mathrm{Poi}(2), M_2\sim \mathrm{Poi}(4), \lambda=0.1$, $N_i^B$ is a homogenous Poisson process, $E[N_i^B([0, 1])] = E[N_i^S([0, 1])] \times 1.0, i=1, 2$.}
  \label{table_expr4}
\end{table}

\newpage
\section{Application to real-world data}\label{sec_realdata}
In this section, using real high-frequency financial transaction data, we compare NBNSP-G, NBNSP-E, and a bivariate Hawkes process model with exponential kernels (BHP-E).


We get individual stock tick data from Nikkei NEEDS (Nikkei Economic Electronic Databank System) traded on the Tokyo Stock Exchange in August 2019.
Three pairs of individual stocks (see Table \ref{table_individual_stocks} below) from the same industry are used.
We extract timestamps of executed sell and buy orders in the afternoon session and remove the first and last 15 minutes to avoid the influence of the auctions.
Then, for each stock, all of the timestamp data are combined sequentially with 2-second intervals between each date.
We also report the number of sell and buy executed orders in the processed dataset in Table \ref{table_individual_stocks}.

\begin{table}
  \centering
  \begin{tabular}{lllrr}
    \toprule
    code & type                     & company                              & \#buy & \#sell \\
    \midrule
    7201 & Transportation Equipment & NISSAN MOTOR CO., LTD.               & 37661 & 35593  \\
    7203 & Transportation Equipment & TOYOTA MOTOR CORPORATION             & 27297 & 26313  \\
    8306 & Banks                    & Mitsubishi UFJ Financial Group, Inc. & 34106 & 35112  \\
    8411 & Banks                    & Mizuho Financial Group, Inc.         & 14405 & 16108  \\
    8031 & Wholesale Trade          & MITSUI \& CO.,LTD.                   & 12349 & 13937  \\
    8058 & Wholesale Trade          & Mitsubishi Corporation               & 20506 & 22906  \\
    \bottomrule
  \end{tabular}
  \caption{The three pairs of stocks used in the experiments and the number of executed orders in the processed data.}
  \label{table_individual_stocks}
\end{table}

We compare the cross-correlation of NBNSP-G, NBNSP-E, and BHP-E.
The NBNSPs are introduced in Section \ref{sec_specific_models} and their parameter are estimated by the QMLE.
The hyperparameter $r$ for the QMLE is set to $1.0$ for all of the experiments.
The conditional intensity functions of the BHP-E are
\[
  \lambda_1^H(t) = \mu_1 + \int_{-\infty}^t \alpha_{11} e^{-\beta_{1}(t-s)} d N_1(s) + \int_{-\infty}^t \alpha_{12} e^{-\beta_{1}(t-s)} d N_2(s),
\]

\[
  \lambda_2^H(t) = \mu_2 + \int_{-\infty}^t \alpha_{21} e^{-\beta_{2}(t-s)} d N_1(s) + \int_{-\infty}^t \alpha_{22} e^{-\beta_{2}(t-s)} d N_2(s),
\]
which are similar to Da Fonseca \& Zaatour~\cite{da2017correlation}.
Therefore, the parameters of BHP-E are
\[
  (\mu_1, \mu_2, \alpha_{11}, \alpha_{12}, \alpha_{21}, \alpha_{22}, \beta_1, \beta_2),
\]
and estimated by MLE using an R package ``emhawkes''.

The empirical kernel estimator of the cross-correlation function is
\begin{equation}\label{eq_kernel_method_ccf}
  \wih{g}(u)
  = \frac{1}{\wih{\lambda}_1\wih{\lambda}_2}\sum_{\substack{x\in N_1, y\in N_2    \\ x\in [r, T-r]}}
  \frac{k_h\bigl((y-x) - u\bigr)}{T - |y-x|}, \quad u\in\bbR,
\end{equation}
where $k_h(z) = (2h)^{-1} 1_{[-h, h]}(z)$ is the uniform kernel with bandwidth $h$.
We set $h = 0.001$ for all of the experiments.

The estimated parameters are reported in Tables \ref{table_est_params_real_start}-\ref{table_est_params_real_end} in Appendix.
Figures \ref{fig_buy} and \ref{fig_sell} show the empirically estimated cross-correlation function using the kernel method for each pair of stocks, as well as the theoretical cross-correlation function for each model based on the estimated parameters.
The theoretical curves for the NBNSPs are calculated from the theoretical formulae in Section \ref{sec_specific_models}.
For BHP-E, 100,000 points are sampled from the estimated model, and then the theoretical curve is estimated by the kernel estimator.

NBNSP-E does not seem to be able to explain the strong correlation near the origin.
BHP-E can explain the correlation near the origin to some extent, but it tends to have poor fits in little away from the origin.
NBNSP-G appears to explain both the correlation near the origin and the tail decay well.

\begin{figure}[htbp]
  \begin{minipage}[b]{0.49\linewidth}
    \centering
    \includegraphics[height=8cm, width=8cm]{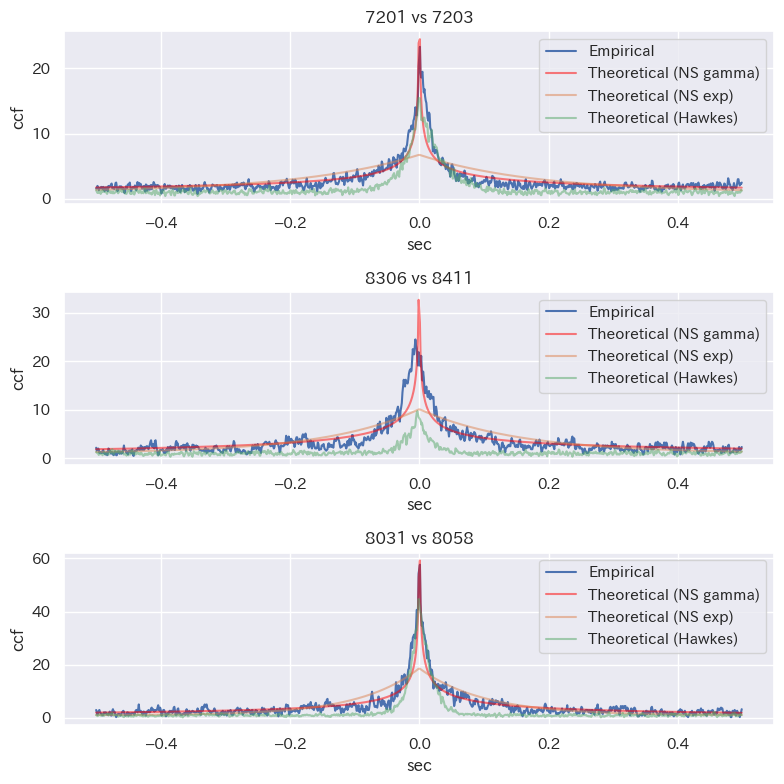}
    \caption{The empirical CCFs estimated by the kernel estimator (\ref{eq_kernel_method_ccf}) and theoretical CCFs for each model using estimated parameters for the buy orders data.}
    \label{fig_buy}
  \end{minipage}
  \begin{minipage}[b]{0.49\linewidth}
    \centering
    \includegraphics[height=8cm, width=8cm]{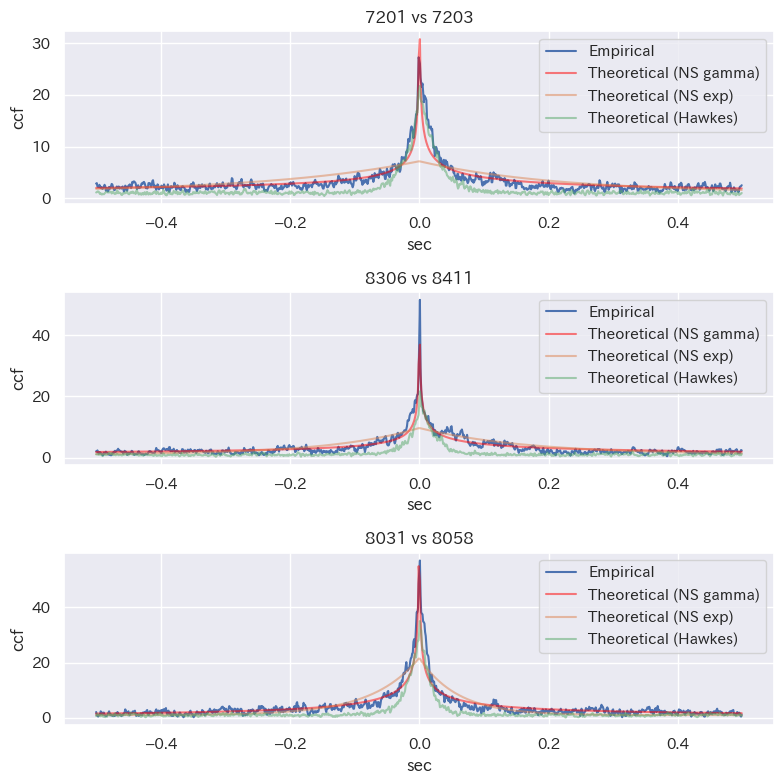}
    \caption{The empirical CCFs estimated by the kernel estimator (\ref{eq_kernel_method_ccf}) and theoretical CCFs for each model using estimated parameters for the sell orders data.}
    \label{fig_sell}
  \end{minipage}
\end{figure}

We discuss how our proposed model NBNSP will be used for lead-lag estimation.
Our model NBNSP of (\ref{eq_rep_our_model}) consists of the noise part $N^B$ and the signal part $N^S$.
In the signal part $N^S$ of (\ref{eq_our_model_signal}), the dispersal kernels $f_1$ and $f_2$ (=the laws of $d_1$ and $d_2$) show how quickly the two stocks react to the arrival of common new information. Therefore, by comparing how much the estimated kernels $f_1(\cdot, \wih{\tau_1})$ and $f_2(\cdot, \wih{\tau_2})$ are concentrated at the origin, we can estimate the lead-lag effect between two stocks.
For instance, one could simply compare the means $m_1$ and $m_2$ of the kernels and say, ``Stock 1 leads Stock 2 by $m_2-m_1$ seconds on average.''
More specifically, in the case of the gamma kernel model NBNSP-G, we can also compare the shape parameters $\alpha_1$ and $\alpha_2$, which indicate the divergence speeds of the kernels at the origin.
Moreover, it is possible to test whether these differences are significant by using asymptotic normality for hypothesis testing if one knows the asymptotic variance.
The development of the estimation theory of the asymptotic variance of QMLE for NBNSP and the construction of the test for the existence of the lead-lag relationship will be the subjects of forthcoming works.
For instance, the subsampling method (e.g.~\cite{biscio2019general}) could be used to estimate the asymptotic variance.
\section{Extensions and alternatives}\label{sec_extension}
Modern empirical applications often demand more flexibility than a bivariate, stationary specification can offer. In practice, one may wish to 
avoid imposing stationarity on financial data with time‑varying trading activity and quantify lead-lag relationships among many assets.
To address these needs, this section develops inhomogeneous and multivariate extensions of our model.
We also present an alternative estimation procedure based on a spectral (frequency-domain) method.

\subsection{Inhomogeneous extension}
We outline below an extension of our model and estimation method to inhomogeneous settings.
Let \(N=(N_1,N_2)\) be a bivariate point process on \(\mathbb{R}\), which is not necessarily stationary.
Recall the intensity function $\lambda_i(\cdot)$ of $N$ is defined as
\[
  E\Bigl[\sum_{x\in N_i}h(x)\Bigr] = \int_{\mathbb{R}} h(u)\lambda_i(u)du, \quad i=1, 2
\]
where $h: \mathbb{R}\to\mathbb{R}$ is non-negative measurable function, and 
the cross intensity function $\lambda_{1, 2}(u, v)$ of $N$ is defined as
\[
  E\Bigl[\sum_{\substack{x\in N_1, y\in N_2}} h(x, y)\Bigr]
  = \int_{\mathbb{R}} h(u, v) \lambda_{1, 2}(u, v)dudv,
\]
where $h: \mathbb{R}^2\to\mathbb{R}$ is a non-negative measurable function.
The cross-correlation function $g(u, v)$ of $N$ is also defined as
\begin{equation}\label{eq_ccf_inhom_def}
    g(u, v) = \frac{\lambda_{1, 2}(u, v)}{\lambda_1(u)\lambda_2(v)}, \quad u, v\in \mathbb{R}.
\end{equation}

\subsubsection{General Formulation}
The key idea is to work under a transition-invariant cross-correlation function (see, e.g., Shaw et al. \cite{shaw2021globally}), which is a multivariate extension of second-order intensity-reweighted stationarity (SOIRS)~\cite{baddeley2000non}.
Suppose we have a transition-invariant cross-correlation function $g$ of $N$, that is, $g(u, v)$ in (\ref{eq_ccf_inhom_def}) depends on only $v-u$.
Then, we have
\[
  E\Bigl[\sum_{\substack{x\in N_1, y\in N_2}} h(x, y)\Bigr]
  = \int_{\mathbb{R}} h(u, v) \lambda_1(u)\lambda_2(v)g(v-u)dudv
\]
for all non-negative measurable function $h: \mathbb{R}^2\to\mathbb{R}$,
with slightly abused notation on $g$.

If the first order intensities parametrized by $\vartheta_i$ as $\lambda_i(\cdot; \vartheta_i), i=1, 2$ and the transition-invariant cross-correlation function parametrized by $\theta$ as $g(\cdot; \theta)$, then one may consider following two-step composite-type QMLE
\begin{gather*}
    \widehat{\vartheta}_i \in \mathop{\arg\max}_{\vartheta_i} 
    \sum_{x\in N_i, x\in [0, T]}\log \lambda_i(x; \vartheta_i) - \int_0^T\lambda_i(u; \vartheta_i)du, \quad i=1, 2,  \\
    \widehat{\theta} \in \mathop{\arg\max}_{\theta}
    \sum_{\substack{x\in N_1, y\in N_2, \\ x, y\in[0, T] , \\ |y-x|\leq r}}
    \Biggl[
    \log g(y-x; \theta) 
    - \log\Biggl(
    \int_0^T \int_0^T \lambda_1(u; \widehat{\vartheta}_1)\lambda_2(v; \widehat{\vartheta}_2)
     g(v-u; \theta) 1_{\{|v-u|\leq r\}}dudv
    \Biggr)
    \Biggr],
\end{gather*}
as considered in the literature (e.g.~\cite{jalilian2013decomposition,prokevsova2017two}) in some univariate cases.
In the second step, one could also utilize minimum contrast estimation based on the cross-K function~\cite{zhu2022minimum} as Waagepetersen \& Guan~\cite{waagepetersen2009two} did in inhomogeneous univariate cases.

\subsubsection{An inhomogeneous bivariate Neyman-Scott process}\label{subsec_inhom_NS}
A inhomogeneous version of bivariate Neyman-Scott process $N^S = (N^S_1, N^S_2)$ is defined as follows:
Let $\mathcal{C}$ be a stationary Poisson process on $\mathbb{R}$ with parent intensity $\lambda>0$.
For $i=1,2$, given $\mathcal{C}$ , define the offspring (cluster) process of component $i$ as an inhomogeneous Poisson process $N_{i,c}$ with intensity
\[
  \Lambda_{i,c}(x;\beta_i,\tau_i)
  \;=\;
  \sigma_i\,f_i(x-c;\tau_i)\,\rho_i(x;\beta_i),
  \qquad x\in\mathbb{R},\ c\in C,
\]
where $\sigma_i>0$ is a scale (mean offspring per parent up to the weight $\rho_i$), $f_i(\cdot;\tau_i)$ is a probability density on $\mathbb{R}$ parametrized by $\tau_i$, and $\rho_i(x;\beta_i)>0$ is a weight function (e.g.\ $\rho_i(x;\beta_i)=\exp\{z_i(x)^\top\beta_i\}$ where $z_i(x)$ is a covariate vector as in Waagepetersen~\cite{Waage2007}). Here, all of the offspring processes are independent.
Then, the inhomogeneous bivariate Neyman-Scott process is defined by $N_i^S=\sum_{c\in C}N_{i,c}, i=1,2$.
If $\rho_i(x;\beta_i) \equiv 1$, this model corresponds to the stationary bivariate Neyman-Scott process presented in Section \ref{sec_NS_model} with specification of the distribution of the number of offspring to $\mathrm{Poi}(\sigma_i), i=1, 2$.
The first-order intensities are
\[
  \lambda_i^S(u)
  \;=\;
  \lambda\,\sigma_i\,\rho_i(u;\beta_i),
  \qquad u\in\mathbb{R},\ i=1,2.
\]
The cross intensity function is
\[
  \lambda_{1,2}^S(u,v)
  \;=\;
  \lambda_1^S(u)\lambda_2^S(v)
  \Biggl(
  1
  \;+\;
  \frac{1}{\lambda}
  \int_{\mathbb{R}} f_1(s;\tau_1)f_2\bigl((v-u)+s;\tau_2\bigr)\,ds
  \Biggr)
\]
so that $N^S$ has the transition invariant cross-correlation function
\[
  g^S(v-u)
  \;=\;
  \frac{\lambda_{1,2}^S(u,v)}{\lambda_1^S(u)\lambda_2^S(v)}
  \;=\;
    1
  \;+\;
  \frac{1}{\lambda}
  \int_{\mathbb{R}} f_1(s;\tau_1)f_2\bigl((v-u)+s;\tau_2\bigr)\,ds.
\]

\subsubsection{An inhomogeneous noisy bivariate Neyman-Scott process}
Consider the inhomogeneous bivariate Neyman-Scott process $N^S$ in Section \ref{subsec_inhom_NS} as the signal,
and an inhomogeneous bivariate point process $N^B=(N_1^B, N_2^B)$ as the noise. 
We assume that: (i) \(N^{B}_1\perp N^{B}_2\) and \(N^B\perp N^S\); (ii) the intensity $\lambda_i^B$ of $N^B_i$ is proportional to $\rho_i(\cdot;\beta_i)$, i.e. 
\[\lambda_i^B(u) = b_i\rho_i(u;\beta_i), \quad b_i>0, i=1, 2. \]
Now we define the inhomogeneous NBNSP as $N = N^S + N^B$. 
Using the Campbell's formula, the intensity function of $N$ is
\[
    \lambda^N_i(u) =  \lambda_i^S(u) + \lambda_i^B(u) = (\lambda\sigma_i + b_i) \rho_i(u;\beta_i),\quad u\in\mathbb{R}, i=1, 2 
\] and the cross-intensity function of $N$ is 
\begin{align*}
    \lambda_{1, 2}^N(u, v) 
    &=  \lambda_1^N(u)\lambda_2^N(v) \,+\, \lambda^{S}_1(u)\lambda^{S}_2(v)\{g^{S}(v-u)-1\}  \\
    &= \lambda_1^N(u)\lambda_2^N(v) \Bigl(
            1 + \frac{\lambda\sigma_1\sigma_2}{(\lambda\sigma_1 + b_1)(\lambda\sigma_2 + b_2)}
            \int_{\mathbb{R}} f_1(s;\tau_1)f_2\bigl((v-u)+s;\tau_2\bigr)\,ds
    \Bigr),
\end{align*}
and hence, the cross-correlation function of $N$ is 
\[
    g^N(v-u; \theta) = 1 + a\int_{\mathbb{R}} f_1(s;\tau_1)f_2\bigl((v-u)+s;\tau_2\bigr)\,ds,\quad v, u\in\mathbb{R}
\]
where $\theta = (a, \tau_1, \tau_2), a=\frac{\lambda\sigma_1\sigma_2}{(\lambda\sigma_1 + b_1)(\lambda\sigma_2 + b_2)}.$

We could estimate the parameters of this model by the two-step composite-type QMLE.
This extended model could explain time-varying trading volume/order flow in real financial markets.

\subsection{Multivariate extension}
We briefly show how our framework extends to dimensions $I>2$ and how inference can be carried out. Throughout this section, $r>0$ is fixed and $W_n$ is the observation window.

Let $\mathcal{C}$ be a homogeneous Poisson process with intensity $\lambda>0$. For each component $i\in\{1,\ldots,I\}$, let $M_i(c)$ be the number of offspring triggered by $c\in \mathcal{C}$ with mean $\sigma_i\in(0,\infty)$, and let $d_i(c,m)$ be i.i.d.\ delays with density $f_i(\cdot;\tau_i)$ on $(0,\infty)$, where $\tau_i\in\mathcal{T}_i\subset\mathbb{R}^{p_i}$. Define the signal process
\[
N_i^S \;=\; \sum_{c\in \mathcal{C}}\sum_{m=1}^{M_i(c)} \delta_{\,c+d_i(c,m)},
\qquad i=1,\ldots,I,
\]
and the observed process $N_i = N_i^S + N_i^B$, where $\{N_i^B\}_{i=1}^I$ are stationary noises, independent of $\mathcal{C}$ and of $\{M_i,d_i\}$. Let $\lambda_i^B=\mathbb{E}[N_i^B([0,1])]$ and $\lambda_i=\lambda\sigma_i+\lambda_i^B$.

For $i\neq j$, the cross-intensity of $(N_i,N_j)$ satisfies
\[
\lambda^{[1,1]}_{i,j}(u)
= \lambda_i\lambda_j + \lambda\sigma_i\sigma_j\int_{\mathbb{R}} f_i(s;\tau_i) f_j(u+s;\tau_j)\,ds,
\]
hence the pairwise cross-correlation takes the same form as in the bivariate case:
\begin{equation}\label{eq:gij}
g_{i,j}(u;\theta_{i,j})
= 1 + a_{i,j}\, \int_{\mathbb{R}} f_i(s;\tau_i) f_j(u+s;\tau_j)\,ds
,\quad
a_{i,j}:=\frac{\lambda\sigma_i\sigma_j}{(\lambda\sigma_i+\lambda_i^B)(\lambda\sigma_j+\lambda_j^B)},
\end{equation}
with parameter $\theta_{i,j}=(a_{i,j},\tau_i,\tau_j)\in \Theta_{i,j}=\mathcal{A}_{i,j}\times\mathcal{T}_i\times\mathcal{T}_j$.

For a pair $(i,j)$ and window $W_n$, let
\[
\bbH_{i,j,n}(\theta_{i,j})
=
\sum_{\substack{x\in N_i,\;y\in N_j\\ x\in W_n\ominus r,\;|y-x|\le r}}
\Big\{\log g_{i,j}(y-x;\theta_{i,j})+\log\widehat\lambda_{i,n}+\log\widehat\lambda_{j,n}\Big\}
- a_n \widehat\lambda_{i,n}\widehat\lambda_{j,n}\int_{|u|\le r} g_{i,j}(u;\theta_{i,j})\,du,
\]
where $a_n=\mathrm{Leb}(W_n\ominus r)$ and $\widehat\lambda_{i,n}=a_n^{-1}N_i(W_n\ominus r)$.
Define the pairwise QMLE
\[
\widehat\theta_{i,j,n}\in\mathop{\arg\max}_{\theta_{i,j}\in\Theta_{i,j}} \bbH_{i,j,n}(\theta_{i,j}).
\]
When some parameters are shared across multiple pairs (e.g., $\tau_i$ belongs to every $\theta_{i,j}$ with $j\ne i$), we combine their multiple pairwise estimates by the averaging estimator
\begin{equation}\label{eq:tau-avg}
\widetilde\tau_{i,n} \;=\; \frac{1}{I-1}\sum_{j\ne i} \big(\widehat\tau_{i\mid j,n}\big),
\end{equation}
where $\widehat\tau_{i\mid j,n}$ denotes the $\tau_i$-block of $\widehat\theta_{i,j,n}$. We can derive the consistency and asymptotic normality of these estimators under the same regularity conditions for each pair $(i, j), i\neq j$ as in the bivariate case.

An alternative is to maximize the global composite-type objective
\[
H^{\mathrm{comp}}_n(\theta)\;=\;\sum_{1\le i<j\le I} \bbH_{i,j,n}\big(\theta_{i,j}\big),
\qquad
\theta=\big(\tau_1,\ldots,\tau_I,\ (a_{i,j})_{i<j}\big),
\]
subject to the sharing constraints on $\{\tau_i\}$. Under the same regularity, the resulting estimator could be consistent and asymptotically normal by the same Z-estimation argument. In practice, however, the dimension of $\theta$ grows as $O(I^2)$. The former pairwise-and-average method avoids such a high-dimensional search while making use of shared information across components.

\subsection{Spectral approach}
Recent work has proposed estimation and inference for noisy point processes, including noisy Hawkes models, by working in the frequency domain via the Bartlett spectrum and Whittle-type likelihoods; see, e.g., Bonnet et al.\ \cite{bonnet2025spectral} and the general asymptotic theory for spectral likelihoods in Yang \& Guan~\cite{yang2024fourier}. A key advantage is that, for a superposition $N=X+Y$ of two independent stationary point processes, the Bartlett spectral density adds: $f^N(\nu)=f^X(\nu)+f^Y(\nu)$ for all $\nu\in\mathbb{R}$ (Proposition~2.2 in~\cite{bonnet2025spectral}). Hence, when the second-order structure of the noise process is \emph{known}, one may write down $f^N$ explicitly and maximize a (matrix) Whittle likelihood over Fourier frequencies $\nu_k=k/T$ using the periodogram or cross-periodogram computed from the observed events; see Bonnet et al. \cite{bonnet2025spectral} for more details.

Our primary interest, however, is the semi-parametric case where the noise structure is \emph{unknown}. Consider the NBNSP $N=N^S+N^B$ with a ``signal'' $N^S$ and an independent background/noise $N^B$, and assume the background components are mutually independent ($N^B_1\indep N^B_2$) and independent of $N^S$. Then the off-diagonal entry of the spectral density, i.e.\ the cross-spectral density, is unaffected by the additive noise (up to a constant):
\[
f^{N}_{12}(\nu;\theta,\lambda_1,\lambda_2)\;=\;\lambda_1\lambda_2\int_{\mathbb{R}}e^{-2\pi i\nu x}\,\{g(u;\theta)-1\}\,du,
\]
where $g(\cdot;\theta)$ is the cross-correlation function of the model and $\lambda_i=\mathbb{E}[N_i([0,1])]$ are the mean intensities. In particular, in our NBNSP construction where $\{g(\cdot;\theta)-1\}/a$ is the convolution of two kernels $f_1(-\cdot;\tau_1)$ and $f_2(\cdot;\tau_2)$, the convolution theorem yields the factorization
\begin{equation}\label{eq_cross_spectrum}
f^{N}_{12}(\nu;\theta,\lambda_1,\lambda_2)\;=\;a\,\lambda_1\lambda_2\,\mathcal{F}\{f_1(\cdot;\tau_1)\}(-\nu)\,\mathcal{F}\{f_2(\cdot;\tau_2)\}(\nu),\quad \theta=(a, \tau_1, \tau_2).
\end{equation}
Here $\mathcal{F}\{h\}(\nu)=\int_{\mathbb{R}}e^{-2\pi i\nu x}h(x)\,dx$.

Let $\hat\lambda_i=T^{-1}N_i([0,T])$ and write $\hat f^{N}_{12}(\nu;\theta)=f^{N}_{12}(\nu;\theta,\hat\lambda_1,\hat\lambda_2)$. Denote the empirical cross-periodogram
\[
I^{N}_{12}(\nu)\;=\;\frac{1}{T}\sum_{\substack{x\in N_1,\;y\in N_2\\ x,y\in[0,T]}}e^{-2\pi i\nu(x-y)},\qquad \nu\in\mathbb{R}.
\]
Then, focusing on the off-diagonal information, one may consider the following adaptive spectral objective over a grid $\nu_k=k/T$:
\[
\ell_{12}(\theta)\;=\;-\frac{1}{T}\sum_{k=1}^{m}\left\{\log \hat f^{N}_{12}(\nu_k;\theta)\;+\;\frac{I^{N}_{12}(\nu_k)}{\hat f^{N}_{12}(\nu_k;\theta)}\right\},\qquad k=1,2,\ldots,m.
\]
Maximizing $\ell_{12}(\theta)$ provides a frequency-domain estimator of the structural parameters $\theta$ that does not require specifying the noise spectrum $f^{N^B}$. This may complement our time-domain approach, i.e., in models where the Fourier transform of $g(\cdot;\theta)-1$ is tractable, $f^{N}_{12}(\nu;\theta)$ has an explicit representation (\ref{eq_cross_spectrum}), which could avoid the numerical approximations for the integral of $g$.
\section{Proofs}\label{sec_proof}
\subsection{Proof of Theorems \ref{thm_consistency} and \ref{thm_asymptotic_normality}
}\label{subsec_prf_general}


We introduce some notations:
\begin{gather*}
  \bbY_n(\theta) = \frac{1}{a_n}(\bbH_n(\theta) - \bbH_n(\theta^*)), \\
  \bbY(\theta) = \lambda_1^*\lambda_2^*
  \int_{|u|\leq r} g(u; \theta^*) \Bigl[
    \log\Bigl(
    \frac{g(u; \theta)}{g(u; \theta^*)}
    \Bigr)
    - \frac{g(u; \theta)}{g(u; \theta^*)} + 1
    \Bigr] du,
\end{gather*}
where $\theta\in\ol{\Theta}$ and $n\in\bbZ_{\geq 1}$.
Note that $\bbY(\cdot)\in C(\ol{\Theta})\cap C^2(\Theta)$ and $\bbH_n(\cdot)\in C(\ol{\Theta})\cap C^2(\Theta)$ for any fixed realization of the point process $N$, thanks to the assumptions [RE]. (Consider interchanging the integral and the differentiation.)

\begin{lemma}\label{lem_consistency_intensity}
  Under [MI] and [WI],
  we have
  $
    \wih{\lambda}_{i, n} \to^p \lambda_i^*
  $
  as $n\to\infty$ for $i=1, 2$.
\end{lemma}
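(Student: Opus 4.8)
The plan is to show $\wih{\lambda}_{i,n} = \frac{1}{a_n}N_i(W_n\ominus r) \to^p \lambda_i^*$ via an $L^2$ argument combined with the mixing assumption. By Campbell's formula, $E[\wih{\lambda}_{i,n}] = \frac{1}{a_n}E[N_i(W_n\ominus r)] = \lambda_i^*$, so the estimator is unbiased and it suffices to prove that $\mathrm{Var}(\wih{\lambda}_{i,n}) \to 0$. Rather than estimating the variance directly through second-order moment measures (whose boundedness we explicitly do not want to assume), I would instead discretize: write $N_i(W_n\ominus r)$ as a sum over the unit cells. More precisely, set $X_l = N_i(C(l)\oplus r)$ (or a version adapted to $W_n\ominus r$) so that each $X_l \in \sigma(N\cap C(l)\oplus r)$, and exploit \eqref{eq_note_on_mixing1}, which gives $\tilde{\alpha}_{c_1,c_2}^X(m) \leq \alpha_{c_1,c_2}^N(m-2r-2; r)$. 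Then the mixing hypothesis $\sum_m \alpha_{2,\infty}^N(m;r)^{\delta/(2+\delta)} < \infty$ of [MI], together with $\|N_i(C(0))\|_{2+\delta} < \infty$, puts us exactly in the setting of the Rosenthal-type bound of Theorem \ref{mixing_moment_ineq} (with $L=2$, after centering).

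The key steps, in order, are: (1) Observe $a_n \to \infty$ by [WI] (since $\mathrm{Leb}(\cup_n (W_n\ominus r)) = \infty$ and the $W_n$ increase). (2) Replace the count $N_i(W_n\ominus r)$ by the sum $\sum_{l\in D_n} Y_l$ where $Y_l$ is an appropriately truncated/localized cell count; using $\#D_n/a_n \to 1$ from [WI] and stationarity, control the discrepancy between $\frac{1}{a_n}N_i(W_n\ominus r)$ and $\frac{1}{\#D_n}\sum_{l\in D_n} Y_l$ in $L^1$ or $L^2$. (3) Apply the moment inequality (Theorem \ref{mixing_moment_ineq}) to the centered variables $X_l = Y_l - E[Y_l]$ with $L=2$ and $\epsilon = \delta$: the summability condition \eqref{mixing_condition1} reduces, for $u,v\ge 2$, $u+v\le c$, to finiteness of $\sum_k (1+k)^{c-u}\tilde{\alpha}_{u,v}^X(k)^{\delta/(2+\delta)}$, and since $\tilde{\alpha}_{u,v}^X(k)\le \alpha_{u,v}^N(k-2r-2;r) \le \alpha_{2,\infty}^N(k-2r-2;r)$ when $u\le 2$ — here one uses that only $u=v=2$ matters because $c$ is the smallest even integer $>2$, i.e.\ $c=4$ — this follows from [MI]. (Strictly: with $L=2$ one takes the even integer $c>L$, so $c=4$, and only the pair $u=v=2$ arises.) The inequality then yields $E\bigl[\bigl|\sum_{l\in D_n}X_l\bigr|^2\bigr] \le C\, \#D_n \, \|X_0\|_{2+\delta}^2$, hence $\mathrm{Var}\bigl(\frac{1}{\#D_n}\sum_{l\in D_n}Y_l\bigr) = O(1/\#D_n) \to 0$. (4) Combine (2) and (3) with Chebyshev's inequality to conclude $\wih{\lambda}_{i,n}\to^p \lambda_i^*$.

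The main obstacle I expect is step (2): reconciling the continuous observation window $W_n\ominus r$ with the integer-cell decomposition needed to invoke the discrete mixing inequality. One must choose the localized cell variables $Y_l$ so that (a) they are measurable with respect to $\sigma(N\cap C(l)\oplus r)$ so that \eqref{eq_note_on_mixing1} applies, (b) their sum over $D_n$ approximates $N_i(W_n\ominus r)$ up to a boundary error that is $o(a_n)$ — this is where $\#D_n/a_n\to 1$ enters — and (c) they have a uniformly bounded $(2+\delta)$-moment, which follows from stationarity and $\|N_i(C(0))\|_{2+\delta}<\infty$ after noting $C(l)\oplus r$ is a fixed-length interval so $N_i(C(l)\oplus r)$ has the same law as $N_i(C(0)\oplus r)$, whose $(2+\delta)$-moment is finite because it is dominated by a finite sum of shifted copies of $N_i(C(0))$. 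Once this bookkeeping is set up, the rest is a routine application of the cited moment inequality and Chebyshev.
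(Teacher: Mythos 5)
Your overall strategy (unbiasedness via Campbell's formula, discretization into unit cells, a mixing-based variance bound, then Chebyshev) is the same as the paper's, and your step (2) is exactly the paper's bookkeeping with $Y_l = N_i(C(l))$ and the boundary error $\frac{\lambda_i^*}{a_n}(\#D_n - a_n)\to 0$. The gap is in step (3): you invoke the Rosenthal-type inequality of Theorem \ref{mixing_moment_ineq} with $L=2$, which forces $c=4$, and its hypothesis \eqref{mixing_condition1} then reads
\[
\sum_{k=1}^{\infty}(1+k)^{2}\,\tilde{\alpha}_{2,2}^X(k)^{\frac{\epsilon}{4+\epsilon}}<\infty,
\]
not $\sum_k \tilde{\alpha}_{u,v}^X(k)^{\delta/(2+\delta)}<\infty$ as you wrote: the weight $(1+k)^{c-u}=(1+k)^2$ does not disappear, and the exponent is $\epsilon/(c+\epsilon)=\epsilon/(4+\epsilon)$, which is smaller than $\delta/(2+\delta)$ when $\epsilon\leq\delta$ and therefore makes each summand larger (the coefficients are at most one). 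The condition [MI] only supplies the unweighted summability $\sum_m \alpha_{2,\infty}^N(m;r)^{\delta/(2+\delta)}<\infty$, and since the moment part of [MI] caps $\epsilon$ at $\delta$, you cannot trade the exponent against the polynomial weight; there are mixing rates satisfying [MI] for which your weighted sum diverges. So the hypotheses of the theorem you cite are not verifiable under the stated assumptions.

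The fix is elementary and is what the paper does: for a second-moment bound you do not need the Rosenthal inequality at all. Apply the covariance inequality (Davydov's bound) to get $|\mathrm{Cov}[N_i(C(l_1)),N_i(C(l_2))]|\lesssim \tilde{\alpha}_{1,1}^X(|l_1-l_2|)^{\frac{\delta}{2+\delta}}\|N_i(C(0))\|_{2+\delta}^2$, then sum over $l_1,l_2\in D_n$ to obtain $\mathrm{Var}\bigl[\frac{1}{a_n}\sum_{l\in D_n}N_i(C(l))\bigr]\leq \frac{2\#D_n}{a_n^2}\sum_m\tilde{\alpha}_{1,1}^X(m)^{\frac{\delta}{2+\delta}}\cdot\mathrm{const}\to 0$, which uses exactly the unweighted summability that [MI] together with \eqref{eq_note_on_mixing1} provides. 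With that substitution the rest of your argument goes through; also note you can take $Y_l=N_i(C(l))$ directly (it is measurable with respect to $\sigma(N\cap C(l)\oplus r)$), so the extra discussion of moments of $N_i(C(l)\oplus r)$ is unnecessary.
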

\begin{proof}
  Let $i=1, 2$ and
  \(
  \wit{\lambda}_{i, n} = \frac{1}{a_n} N_i\Bigl(\bigsqcup_{l\in D_n}C(l)\Bigr)
  = \frac{1}{a_n}\sum_{l\in D_n} N_i(C(l)).
  \)
  The identically distributed random field $X = \{N_i(C(l))\}_{l\in\bbZ}$ satisfies
  $\sum_{m=0}^\infty\tilde{\alpha}_{1, 1}^X(m)^{\frac{\delta}{2+\delta}} < \infty$ and $\sup_{l\in\bbZ}\|N_i(C(l))\|_{2+\delta} = \|N_i(C(0))\|_{2+\delta} <\infty$ thanks to the assumptions [MI].
  By the covariance inequality, we have
  \begin{align*}
    \mathop{\text{Var}}[\wit{\lambda}_{i, n}]
     & \leq \frac{1}{a_n^2} \sum_{l_1, l_2\in D_n} \mathrm{Cov}[N_i(C(l_1)), N_i(C(l_2))]
    \lesssim \frac{1}{a_n^2} \sum_{l_1, l_2\in D_n} \tilde{\alpha}_{1, 1}^X(|l_1-l_2|)^{\frac{\delta}{2+\delta}}                             \\
     & \leq \frac{2}{a_n^2} (\#D_n) \sum_{m=0}^\infty\tilde{\alpha}_{1, 1}^X(m)^{\frac{\delta}{2+\delta}} \to 0.\quad (\because \text{[WI]})
  \end{align*}
  On the other hand,
  \(
  E[\wit{\lambda}_{i, n}] = \frac{\#D_n}{a_n}\lambda_i^* \to \lambda_i^*.
  \)
  Thus, we have
  \(
  \wit{\lambda}_{i, n} \to^p \lambda_i^*.
  \)
  Besides,
  \begin{align*}
    E[|\wit{\lambda}_{i, n} - \wih{\lambda}_{i, n}|]
    = \frac{1}{a_n}E\Bigl[N_i\Bigl(
      \Bigl(\bigsqcup_{l\in D_n}C(l)\Bigr) \setminus W_n\ominus r
      \Bigr)\Bigr]
    = \frac{\lambda_i^*}{a_n}  (
    \#D_n - a_n
    ) \to 0. \quad (\because \text{[WI]})
  \end{align*}
  Therefore, we obtain the result.
\end{proof}
\halflineskip

\begin{lemma}\label{lem_ergodic_second}
  Suppose a Borel measurable function $h: \bbR\to \bbR$ satisfies
  $\|F_2^N(|h|; C(0))\|_2 < \infty$.
  Then, under [MI] and [WI], we have
  \begin{equation*}
    \frac{1}{a_n}F_{2, n}^N(h) \to^p \lambda_1^*\lambda_2^* \int_{|u|\leq r} h(u)g(u; \theta^*)du
  \end{equation*}
  as $n\to \infty$.
  Especially, we have
  \begin{equation*}
    \frac{1}{a_n}F_{2, n}^N\Bigl(\pt^i\log g(\cdot; \theta)\Bigr) \to^p \lambda_1^*\lambda_2^* \int_{|u|\leq r} \pt^i\log g(u; \theta)g(u; \theta^*)du
  \end{equation*}
  as $n\to \infty$
  for all $i\in\{0, 1, 2, 3\}$ and $\theta\in\Theta$ if we further assume [RE].
\end{lemma}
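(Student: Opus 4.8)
The plan is to follow the pattern of the proof of Lemma \ref{lem_consistency_intensity}: prove the convergence in probability by showing that the rescaled sum has vanishing variance and a mean converging to the stated limit, after discretising the window into unit cells and using Campbell's formula \eqref{cross_intensity_formula} to identify the limit. First I would reduce $\frac{1}{a_n}F_{2,n}^N(h)$ to a sum over cells. Since $F_2^N(h;\cdot)$ is $\sigma$-additive and $W_n\ominus r\subseteq\bigsqcup_{l\in D_n}C(l)$, one has $|F_{2,n}^N(h)-\sum_{l\in D_n}F_2^N(h;C(l))|\le F_2^N\big(|h|;(\bigsqcup_{l\in D_n}C(l))\setminus(W_n\ominus r)\big)$, whose expectation is $\lambda_1^*\lambda_2^*(\#D_n-a_n)\int_{|u|\le r}|h(u)|g(u;\theta^*)\,du$ by Campbell's formula; the integral is finite since it equals $E[F_2^N(|h|;C(0))]/(\lambda_1^*\lambda_2^*)\le\|F_2^N(|h|;C(0))\|_2/(\lambda_1^*\lambda_2^*)<\infty$. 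By [WI] this discrepancy is $o(a_n)$, so it suffices to treat $\frac{1}{a_n}\sum_{l\in D_n}X_l$ with $X_l:=F_2^N(h;C(l))$.

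The field $\{X_l\}_{l\in\bbZ}$ is identically distributed by stationarity of $N$, with $E[X_l]=\lambda_1^*\lambda_2^*\int_{|u|\le r}h(u)g(u;\theta^*)\,du$ again by Campbell's formula, and $X_l\in\sigma(N\cap(C(l)\oplus r))$: the $x$-points lie in $C(l)$ and the $y$-points within distance $r$ of them, and the two boundary points contribute nothing a.s. because $E[N_i(\{t\})]\equiv0$ (constant in $t$ by stationarity, hence $0$ by the first-moment part of [MI]). Thus \eqref{eq_note_on_mixing1} gives $\tilde\alpha^X_{1,1}(m)\le\alpha^N_{1,1}(m-2r-2;r)\le\alpha^N_{2,\infty}(m-2r-2;r)$, and [MI] makes $\sum_m\tilde\alpha^X_{1,1}(m)$ finite (using $\alpha\le1$, so $\alpha\le\alpha^{\delta/(2+\delta)}$). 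Hence $\mathrm{Var}\big(\frac{1}{a_n}\sum_{l\in D_n}X_l\big)\le\frac{\#D_n}{a_n^2}\sum_{k\in\bbZ}|\mathrm{Cov}(X_k,X_0)|$, and since $a_n\to\infty$ by [WI] it remains to bound $\sum_k|\mathrm{Cov}(X_k,X_0)|$.

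The main work — where only an $L^2$ bound on $F_2^N(|h|;C(0))$ is available, so Davydov's covariance inequality cannot be applied to $X_l$ directly — is handled by a double truncation. Write $h=h_K+h^K$ with $h_K:=h\,1_{\{|h|\le K\}}$, and set $X_l^{K,L}:=F_2^N(h_K;C(l))\,1_{\{N_1(C(l))\le L\}}\,1_{\{N_2(C(l)\oplus r)\le L\}}$. Almost surely $|F_2^N(h_K;C(l))|\le K\,N_1(C(l))N_2(C(l)\oplus r)$, so $|X_l^{K,L}|\le KL^2$ and $X_l^{K,L}\in\sigma(N\cap(C(l)\oplus r))$; the covariance inequality for bounded $\alpha$-mixing variables gives $|\mathrm{Cov}(X_{l_1}^{K,L},X_{l_2}^{K,L})|\le4(KL^2)^2\,\alpha^N_{2,\infty}((|l_1-l_2|-2r-2)_+;r)$, which is summable, so $\mathrm{Var}\big(\frac1{a_n}\sum_{l\in D_n}X_l^{K,L}\big)\to0$ for fixed $K,L$, and with $\frac{\#D_n}{a_n}E[X_0^{K,L}]\to E[X_0^{K,L}]$ this yields $\frac1{a_n}\sum_{l\in D_n}X_l^{K,L}\to^p E[X_0^{K,L}]$. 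The truncation errors are uniform in $n$: $\frac1{a_n}\sum_{l\in D_n}E|X_l-X_l^{K,L}|=\frac{\#D_n}{a_n}\big(E[F_2^N(|h_K|;C(0))\,1_{\{N_1(C(0))>L\,\text{or}\,N_2(C(0)\oplus r)>L\}}]+\lambda_1^*\lambda_2^*\int_{|u|\le r}|h^K(u)|g(u;\theta^*)\,du\big)\to0$ as $L\to\infty$ then $K\to\infty$ by dominated convergence (dominating functions $F_2^N(|h|;C(0))\in L^1$ and $|h|g(\cdot;\theta^*)\in L^1([-r,r])$), and similarly $E[X_0^{K,L}]\to\lambda_1^*\lambda_2^*\int h_K g(\cdot;\theta^*)\to\lambda_1^*\lambda_2^*\int h\,g(\cdot;\theta^*)$. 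An $\epsilon/3$ argument over the three error sources, plus the cell-reduction of the first step, gives the general statement. For the ``Especially'' part I would simply apply it with $h=\pt^i\log g(\cdot;\theta)$: the needed $\|F_2^N(|\pt^i\log g(\cdot;\theta)|;C(0))\|_2<\infty$ follows from the stronger $L^{2+\delta}$ bound in [MI] (which also lets one skip the count-truncation and invoke Davydov's inequality directly on $X_l$), and [RE](iv) ensures $\int_{|u|\le r}|\pt^i\log g(u;\theta)|g(u;\theta^*)\,du<\infty$ so the limit is well defined.

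The main obstacle is precisely this moment bookkeeping: the hypothesis controls $F_2^N(|h|;C(0))$ only in $L^2$, not in $L^{2+\delta}$, so one cannot feed $X_l$ straight into an $\alpha$-mixing covariance inequality; the truncation in both the values of $h$ and the local point counts $N_1(C(l)),N_2(C(l)\oplus r)$, together with the uniform-in-$n$ estimate of the truncation error, is what circumvents this. Everything else reuses the mechanics already present in the proof of Lemma \ref{lem_consistency_intensity}.
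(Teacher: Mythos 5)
Your proof is correct, and the overall skeleton (reduction to the cells $C(l)$, Campbell's formula for the mean, the $\#D_n-a_n$ boundary correction via [WI]) is exactly the paper's. Where you genuinely diverge is the variance bound. The paper treats $X_l=F_2^N(h;C(l))$ directly as a stationary mixing field and invokes the Davydov-type covariance inequality, asserting $\|F_2^N(h;C(0))\|_{2+\delta}<\infty$ ``thanks to [MI]''; that $(2+\delta)$-moment is indeed available for the specific functions $|\pt^j\log g(\cdot;\theta)|$ used in the ``Especially'' part, but it is not implied by the lemma's stated hypothesis $\|F_2^N(|h|;C(0))\|_2<\infty$, so the paper's argument quietly strengthens the assumption. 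Your double truncation (in the values of $h$ and in the local counts $N_1(C(l))$, $N_2(C(l)\oplus r)$) replaces Davydov by the elementary bound $|\mathrm{Cov}(U,V)|\le 4\|U\|_\infty\|V\|_\infty\,\alpha$ for bounded variables, and your uniform-in-$n$ $L^1$ control of the truncation error (dominated convergence against $F_2^N(|h|;C(0))\in L^1$ and $|h|g(\cdot;\theta^*)\in L^1([-r,r])$) closes the gap: your version proves the lemma under the hypothesis as literally stated (in fact only integrability of $F_2^N(|h|;C(0))$ and summability of $\alpha_{2,\infty}^N(m;r)$ are used), at the cost of a longer $\epsilon/3$ bookkeeping. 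Your side remark that the boundary atoms of $C(l)\oplus r$ carry no mass a.s.\ (so that $X_l\in\sigma(N\cap(C(l)\oplus r))$) is a detail the paper glosses over and is handled correctly.
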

\begin{proof}
  Thanks to the assumption [MI], the identically distributed random field $X = \{F_2^N(h; C(l))\}_{l\in\bbZ}$ satisfies
  $\sum_{m=0}^\infty\tilde{\alpha}_{1, 1}^X(m)^{\frac{\delta}{2+\delta}} < \infty$ by (\ref{eq_note_on_mixing1}) and
  $\sup_{l\in\bbZ}\|F_2^N(h; C(l))\|_{2+\delta} = \|F_2^N(h; C(0))\|_{2+\delta} <\infty$.
  Using the covariance inequality, we have
  \begin{align*}
    \mathop{\text{Var}}\Bigl[\frac{1}{a_n}\sum_{l\in D_n}F_2^N(h; C(l))\Bigr]
     & \leq \frac{1}{a_n^2} \sum_{l_1, l_2\in D_n}
    \mathrm{Cov}[F_2^N(h; C(l_1)), F_2^N(h; C(l_2))]
    \lesssim \frac{1}{a_n^2} \sum_{l_1, l_2\in D_n} \tilde{\alpha}_{1, 1}^X(|l_1-l_2|)^{\frac{\delta}{2+\delta}}                              \\
     & \leq \frac{1}{a_n^2} (\#D_n) \sum_{m=0}^\infty\tilde{\alpha}_{1, 1}^X(m)^{\frac{\delta}{2+\delta}} \to 0. \quad (\because \text{[WI]})
  \end{align*}
  On the other hand,
  \begin{align*}
    E\Bigl[\frac{1}{a_n}\sum_{l\in D_n}F_2^N(h; C(l))\Bigr]
    = \frac{\#D_n}{a_n}\lambda_1^*\lambda_2^* \int_{|u|\leq r} h(u)g(u; \theta^*)du
    \to\lambda_1^*\lambda_2^* \int_{|u|\leq r} h(u)g(u; \theta^*)du.
  \end{align*}
  Thus, we have
  \(
  \frac{1}{a_n}\sum_{l\in D_n}F_2^N(h; C(l)) \to^p \lambda_1^*\lambda_2^* \int_{|u|\leq r} h(u)g(u; \theta^*)du.
  \)
  Besides,
  \begin{align*}
     & \quad E\Bigl[\Bigl|\frac{1}{a_n}F_{2, n}^N(h) - \frac{1}{a_n}F_2^N\Bigl(h; \bigsqcup_{l\in D_n}C(l)\Bigr)\Bigr|\Bigr]
    = \frac{1}{a_n}E\Bigl[F_2^N\Bigl(h;\Bigl(
      \Bigl(
      \bigsqcup_{l\in D_n}C(l)
      \Bigr) \setminus W_n\ominus r
    \Bigr)\Bigr)\Bigr]                                                                                                       \\
     & = \frac{1}{a_n} \mathrm{Leb}\Bigl(
    \Bigl(
    \bigsqcup_{l\in D_n}C(l)
    \Bigr) \setminus W_n\ominus r
    \Bigr) \lambda_1^*\lambda_2^* \int_{|u|\leq r} h(u)g(u; \theta^*)du
    = \frac{1}{a_n}  (
    \#D_n - a_n
    ) \lambda_1^*\lambda_2^* \int_{|u|\leq r} h(u)g(u; \theta^*)du
    \to 0
  \end{align*}
  because of the assumption [WI].
  Therefore, we obtain the result.
\end{proof}
\halflineskip

\begin{lemma}\label{lem_pointwise_lln_y}
  Assume [WI], [MI], and [RE].
  Then, we have
  $
    \bbY_n(\theta) \to^p \bbY(\theta)
  $
  as $n\to\infty$
  for all $\theta\in\Theta$.
\end{lemma}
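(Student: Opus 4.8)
The plan is to write $\bbY_n(\theta)$ out explicitly, observe that it splits into one ``$F_2$-type'' spatial average and one term built from the intensity estimators, and then feed these into the laws of large numbers already proved in Lemma~\ref{lem_ergodic_second} and Lemma~\ref{lem_consistency_intensity}, combining the resulting convergences in probability in the usual way.

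First I would note that, because $F_{2,n}^N(\cdot)$ is linear and the quantities $\log\wih{\lambda}_{1,n}$ and $\log\wih{\lambda}_{2,n}$ do not depend on the integration variable, they cancel in the difference $\bbH_n(\theta)-\bbH_n(\theta^*)$. Hence
\[
  \bbY_n(\theta) = \frac{1}{a_n}F_{2,n}^N\bigl(\log g(\cdot;\theta) - \log g(\cdot;\theta^*)\bigr) - \wih{\lambda}_{1,n}\wih{\lambda}_{2,n}\int_{|u|\leq r}\bigl(g(u;\theta)-g(u;\theta^*)\bigr)\,du.
\]
For the first term, since $\theta\in\Theta$ and $\theta^*\in\Theta$, the second assertion of Lemma~\ref{lem_ergodic_second} (with $i=0$) applies to both $g(\cdot;\theta)$ and $g(\cdot;\theta^*)$, so by linearity $\frac{1}{a_n}F_{2,n}^N(\log g(\cdot;\theta)-\log g(\cdot;\theta^*))\to^p\lambda_1^*\lambda_2^*\int_{|u|\leq r}(\log g(u;\theta)-\log g(u;\theta^*))g(u;\theta^*)\,du$; this integral is finite because $|\log g(\cdot;\theta)-\log g(\cdot;\theta^*)|\leq 2f_{B,2}$ on $[-r,r]$ by [RE](iv) and $\int_{|u|\leq r}f_{B,2}(u)g(u;\theta^*)\,du<\infty$.

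For the second term, $\int_{|u|\leq r}(g(u;\theta)-g(u;\theta^*))\,du$ is a finite deterministic constant by [RE](iii) (with $i=0$), and $\wih{\lambda}_{1,n}\wih{\lambda}_{2,n}\to^p\lambda_1^*\lambda_2^*$ by Lemma~\ref{lem_consistency_intensity} together with the continuous mapping theorem. Since sums and products are continuous, combining the two limits and using the pointwise identity
\[
  \bigl(\log g(u;\theta)-\log g(u;\theta^*)\bigr)g(u;\theta^*) - \bigl(g(u;\theta)-g(u;\theta^*)\bigr) = g(u;\theta^*)\Bigl[\log\frac{g(u;\theta)}{g(u;\theta^*)} - \frac{g(u;\theta)}{g(u;\theta^*)} + 1\Bigr]
\]
yields $\bbY_n(\theta)\to^p\bbY(\theta)$, which is the claim.

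There is no real obstacle here: the argument is entirely a matter of bookkeeping once Lemmas~\ref{lem_consistency_intensity} and~\ref{lem_ergodic_second} are in hand. The only points deserving attention are the cancellation of the intensity-estimator factors $\log\wih{\lambda}_{i,n}$ in the difference $\bbH_n(\theta)-\bbH_n(\theta^*)$, and the verification that the relevant integrands are integrable against $g(\cdot;\theta^*)$ so that the limiting integrals are well defined — both of which follow directly from [RE] and the last display of [MI].
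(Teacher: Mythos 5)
Your proof is correct and follows essentially the same route as the paper: the same decomposition of $\bbY_n(\theta)$ (with the $\log\wih{\lambda}_{i,n}$ terms cancelling), Lemma \ref{lem_ergodic_second} for the $F_{2,n}^N$ term, Lemma \ref{lem_consistency_intensity} for the intensity product, and the same algebraic identity to recognize the limit as $\bbY(\theta)$. Your citation of [RE](iv) for the integrability of $\log g(\cdot;\theta)\,g(\cdot;\theta^*)$ is in fact the more apt reference (the paper points to [RE](iii) there).
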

\begin{proof}
  By definition,
  \begin{align}\label{al_pointwise_consistency1}
    \bbY_n(\theta)
     & = \frac{1}{a_n}(\bbH_n(\theta) - \bbH_n(\theta^*)) \nonumber                           \\
     & = \frac{1}{a_n}F_{2, n}^N\Bigl(\log(g(\cdot; \theta)) - \log(g(\cdot; \theta^*))\Bigr)
    - \wih{\lambda}_{1, n}\wih{\lambda}_{2, n}
    \int_{|u|\leq r}(g(u; \theta) - g(u; \theta^*))du.
  \end{align}
  Now, we have
  \begin{equation}\label{eq_pointwise_consistency1}
    \frac{1}{a_n}F_{2, n}^N(\log(g(\cdot; \theta)))
    \to^p
    \lambda_1^*\lambda_2^*\int_{|u|\leq r}\log(g(u; \theta))g(u; \theta^*)du
  \end{equation}
  by Lemma \ref{lem_ergodic_second}.
  Here, $\log(g(\cdot; \theta))g(\cdot; \theta^*)$ is indeed integrable on $[-r, r]$ thanks to the assumption [RE](iii).
  Also, we have
  \begin{equation}\label{eq_pointwise_consistency2}
    \wih{\lambda}_{1, n}\wih{\lambda}_{2, n}
    \to^p \lambda_1^*\lambda_2^*
  \end{equation}
  by Lemma \ref{lem_consistency_intensity}.
  Thus, combining (\ref{al_pointwise_consistency1}), (\ref{eq_pointwise_consistency1}), and (\ref{eq_pointwise_consistency2}), we obtain
  \begin{align*}
    \bbY_n(\theta)
     & = \frac{1}{a_n}(\bbH_n(\theta) - \bbH_n(\theta^*)) \\
     & \to^p \lambda_1^*\lambda_2^*\int_{|u|\leq r}
    \Biggl(
    \log(g(u; \theta))g(u; \theta^*)
    - g(u; \theta)
    - \log(g(u; \theta^*))g(u; \theta^*)
    + g(u; \theta^*)
    \Biggr)du                                             \\
     & = \lambda_1^*\lambda_2^*\int_{|u|\leq r}
    \Biggl(
    \log\frac{g(u; \theta)}{g(u; \theta^*)}
    - \frac{g(u; \theta)}{g(u; \theta^*)} + 1
    \Biggr)g(u; \theta^*)du
    = \bbY(\theta).
  \end{align*}
  Note that dividing by $g(u; \theta^*)$ is allowed because of the assumption [RE](i).
\end{proof}
\halflineskip

\begin{lemma}\label{lem_bound_first_diff_y}
  Under [WI], [MI], and [RE], we have
  \(
  \displaystyle
  \sup_{\theta\in\Theta}|\pt\bbY_n(\theta)| = O_p(1)
  \)
  as $n \to \infty$
  and
  \(
  \sup_{\theta\in\Theta}|\pt\bbY(\theta)| < \infty.
  \)
\end{lemma}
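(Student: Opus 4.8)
The plan is to differentiate the explicit representation of $\bbY_n$ recorded in (\ref{al_pointwise_consistency1}), estimate the two resulting terms uniformly over $\theta$ using the dominating functions $f_{B,1},f_{B,2}$ supplied by [RE], and invoke Campbell's formula (\ref{cross_intensity_formula}) together with Lemma \ref{lem_consistency_intensity}. First I would note that, for a fixed realization of $N$, the window $W_n\ominus r$ is bounded and $N$ is locally finite, so $\theta\mapsto F_{2,n}^N(\log g(\cdot;\theta))$ is a \emph{finite} sum of functions each in $C^3(\Theta)$ (by [RE](i),(ii)), while $\theta\mapsto\int_{|u|\le r}g(u;\theta)\,du$ is $C^3$ with derivatives obtained by differentiating under the integral sign, the interchange being justified by the domination $|\pt^i g(\cdot;\theta)|\le f_{B,1}(\cdot)\in\mathrm{L}^1([-r,r])$ from [RE](iii). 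Hence $\bbY_n\in C^3(\Theta)$ (as already remarked) and
\[
  \pt\bbY_n(\theta)=\frac{1}{a_n}F_{2,n}^N\bigl(\pt\log g(\cdot;\theta)\bigr)-\wih\lambda_{1,n}\wih\lambda_{2,n}\int_{|u|\le r}\pt g(u;\theta)\,du .
\]

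For the first term, since $F_2^N(\,\cdot\,;W_n\ominus r)$ is a sum of values of its (here vector-valued) argument over a finite point configuration, the triangle inequality together with [RE](iv) gives, for every $\theta\in\Theta$,
\[
  \bigl|F_{2,n}^N(\pt\log g(\cdot;\theta))\bigr|\le F_{2,n}^N\bigl(|\pt\log g(\cdot;\theta)|\bigr)\le F_{2,n}^N(f_{B,2}),
\]
so that $\sup_{\theta\in\Theta}\bigl|\tfrac{1}{a_n}F_{2,n}^N(\pt\log g(\cdot;\theta))\bigr|\le\tfrac{1}{a_n}F_{2,n}^N(f_{B,2})$. By Campbell's formula and [RE](iv), $E\bigl[\tfrac{1}{a_n}F_{2,n}^N(f_{B,2})\bigr]=\lambda_1^*\lambda_2^*\int_{|u|\le r}f_{B,2}(u)g(u;\theta^*)\,du<\infty$, a constant not depending on $n$; being nonnegative, $\tfrac{1}{a_n}F_{2,n}^N(f_{B,2})$ is therefore $O_p(1)$ by Markov's inequality. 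For the second term, $\sup_{\theta\in\Theta}\bigl|\int_{|u|\le r}\pt g(u;\theta)\,du\bigr|\le\int_{|u|\le r}f_{B,1}(u)\,du<\infty$ by [RE](iii), while $\wih\lambda_{1,n}\wih\lambda_{2,n}\to^p\lambda_1^*\lambda_2^*$ by Lemma \ref{lem_consistency_intensity} and hence is $O_p(1)$ (this is the only place [WI] and [MI] enter). Adding the two estimates yields $\sup_{\theta\in\Theta}|\pt\bbY_n(\theta)|=O_p(1)$.

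The deterministic bound is handled in the same way: differentiating $\bbY$ under the integral sign (legitimate since $|\pt\log g(\cdot;\theta)\,g(\cdot;\theta^*)|\le f_{B,2}(\cdot)g(\cdot;\theta^*)$ and $|\pt g(\cdot;\theta)|\le f_{B,1}(\cdot)$ are integrable on $[-r,r]$) gives $\pt\bbY(\theta)=\lambda_1^*\lambda_2^*\int_{|u|\le r}\bigl(\pt\log g(u;\theta)\,g(u;\theta^*)-\pt g(u;\theta)\bigr)du$, whence $\sup_{\theta\in\Theta}|\pt\bbY(\theta)|\le\lambda_1^*\lambda_2^*\bigl(\int_{|u|\le r}f_{B,2}(u)g(u;\theta^*)\,du+\int_{|u|\le r}f_{B,1}(u)\,du\bigr)<\infty$. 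I do not expect a genuine obstacle in this argument; the only points demanding care are the justification of differentiating under the integral sign and past the pathwise‑finite sum $F_{2,n}^N$, and keeping straight that the integrability in [RE](iii) is against Lebesgue measure — it controls the compensator integral — whereas the one in [RE](iv) carries the extra weight $g(\cdot;\theta^*)$, since it is fed through the point‑process sum and Campbell's formula.
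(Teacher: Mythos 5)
Your proposal is correct and follows essentially the same route as the paper: differentiate under the integral/past the finite point-process sum, dominate the two terms uniformly over $\Theta$ by $f_{B,2}$ and $f_{B,1}$ from [RE](iii)--(iv), and use Lemma \ref{lem_consistency_intensity} for $\wih\lambda_{1,n}\wih\lambda_{2,n}$. The only (harmless) divergence is in how you conclude that $\tfrac{1}{a_n}F_{2,n}^N(f_{B,2})=O_p(1)$: the paper invokes its law of large numbers (Lemma \ref{lem_ergodic_second}, which uses [MI] and [WI]) to get convergence in probability to a finite constant, whereas you use Campbell's formula (\ref{cross_intensity_formula}) plus Markov's inequality on a nonnegative variable with bounded mean --- a slightly more elementary argument that avoids the mixing condition for that term, and is perfectly valid here since only tightness is needed.
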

\begin{proof}
  First, we mention that the assumptions [RE](iii) and [RE](iv) allow us the termwise differentiation as
  \begin{equation}\label{eq_interchange_g}
    \pt \int_{|u|\leq r} g(u; \theta)du = \int_{|u|\leq r} \pt g(u; \theta)du
  \end{equation}
  and
  \begin{equation}\label{eq_interchange_log_g}
    \pt \int_{|u|\leq r} \log (g(u; \theta))g(u; \theta^*)du = \int_{|u|\leq r} \pt\log (g(u; \theta)) g(u; \theta^*)du.
  \end{equation}

  By the assumption [RE](iii) and [RE](iv), we have
  \begin{align*}
     & \quad \sup_{\theta\in\Theta}|\pt\bbY_n(\theta)|      \\
     & =
    \sup_{\theta\in\Theta}\Bigl|\frac{1}{a_n} F_{2, n}^N(\pt\log g(\cdot; \theta))
    - \wih{\lambda}_{1, n}\wih{\lambda}_{2, n}
    \pt\int_{|u|\leq r} g(u; \theta)du\Bigr|                \\
     & \leq \frac{1}{a_n}
    \sup_{\theta\in\Theta}\Bigl|F_{2, n}^N(|\pt\log g(\cdot; \theta)|)\Bigr|
    + \wih{\lambda}_{1, n}\wih{\lambda}_{2, n}
    \sup_{\theta\in\Theta} \Bigl|\int_{|u|\leq r} \pt g(u; \theta)du\Bigr|
    \quad (\because \text{(\ref{eq_interchange_g})})        \\
     & \leq \frac{1}{a_n}F_{2, n}^N(f_{B, 2}) +
    \wih{\lambda}_{1, n}\wih{\lambda}_{2, n}\int_{|u|\leq r} f_{B, 1}(u)du
    \quad (\because \text{[RE](iii) \& [RE](iv)})           \\
     & \to^p \lambda_1^*\lambda_2^* \int_{|u|\leq r} \Bigl[
      f_{B, 2}(u)g(u; \theta^*) + f_{B, 1}(u)
      \Bigr]du < \infty.
    \quad (\because \text{Lemmas \ref{lem_consistency_intensity} and \ref{lem_ergodic_second}})
  \end{align*}
  Thus, we have $\sup_{\theta\in\Theta}|\pt\bbY_n(\theta)| = O_p(1)$.
  Besides,
  \begin{align*}
    \sup_{\theta\in\Theta}|\pt\bbY(\theta)|
     & = \lambda_1^*\lambda_2^*  \sup_{\theta\in\Theta}
    \Bigl| \pt \int_{|u|\leq r} g(u; \theta^*) \Bigl[
      \log\Bigl(
      \frac{g(u; \theta)}{g(u; \theta^*)}
      \Bigr)
      - \frac{g(u; \theta)}{g(u; \theta^*)} + 1
      \Bigr] du
    \Bigr|                                              \\
     & = \lambda_1^*\lambda_2^*  \sup_{\theta\in\Theta}
    \Bigl|\int_{|u|\leq r}  \Bigl[
      \pt\log (g(u; \theta)) g(u; \theta^*)
      - \pt g(u; \theta)
      \Bigr] du
    \Bigr|                                              \\
     & \leq \lambda_1^*\lambda_2^*
    \int_{|u|\leq r}  \Bigl[
      f_{B, 2}(u)g(u; \theta^*)
      + f_{B, 1}(u)
      \Bigr] du < \infty.
  \end{align*}
  Therefore, we obtain the result.
\end{proof}
\halflineskip

\begin{lemma}\label{lem_id_y}
  Under [RE] and [ID], we have
  $\bbY(\theta^*) = 0$ and
  $\bbY(\theta) < 0$ for all $\theta\in\ol{\Theta}\setminus\{\theta^*\}$.
\end{lemma}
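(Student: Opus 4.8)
The plan is to exploit the elementary inequality $\log x - x + 1 \le 0$ for $x>0$, with equality if and only if $x = 1$. Writing $\phi_\theta(u) = g(u;\theta)/g(u;\theta^*)$, which is well defined and strictly positive on $[-r,r]$ by [RE](i), the integrand defining $\bbY(\theta)$ equals $g(u;\theta^*)\bigl(\log\phi_\theta(u) - \phi_\theta(u) + 1\bigr)$, a product of the nonnegative weight $g(u;\theta^*)$ and a nonpositive factor; hence the integrand is $\le 0$ a.e.\ on $[-r,r]$ and $\bbY(\theta) \le 0$ for every $\theta \in \ol\Theta$. First I would record that $\bbY(\theta)$ is a well-defined real number: $g(\cdot;\theta^*), g(\cdot;\theta) \in \mathrm{L}^1([-r,r])$ by [RE](iii) with $i=0$, while $g(\cdot;\theta^*)\log g(\cdot;\theta)$ and $g(\cdot;\theta^*)\log g(\cdot;\theta^*)$ are integrable by [RE](iv), so the integrand decomposes into separately integrable pieces.

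For $\theta = \theta^*$ we have $\phi_{\theta^*}\equiv 1$, so the integrand vanishes identically and $\bbY(\theta^*) = 0$.

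For the strict inequality, fix $\theta \in \ol\Theta\setminus\{\theta^*\}$ and suppose for contradiction that $\bbY(\theta) = 0$. Since the integrand is $\le 0$ a.e.\ and integrates to zero, it vanishes a.e.\ on $[-r,r]$; because $g(u;\theta^*) \ge \essinf_{|u|\le r}\inf_{\theta\in\ol\Theta} g(u;\theta) > 0$ by [RE](i), this forces $\log\phi_\theta(u) - \phi_\theta(u) + 1 = 0$ a.e., and by the equality case of the inequality, $\phi_\theta(u) = 1$ a.e., i.e.\ $g(\cdot;\theta) = g(\cdot;\theta^*)$ a.e.\ on $[-r,r]$. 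By [ID] this yields $\theta = \theta^*$, a contradiction. Hence $\bbY(\theta) < 0$ for all $\theta \in \ol\Theta\setminus\{\theta^*\}$.

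There is essentially no serious obstacle here; the only point requiring a little care is the passage from ``nonpositive integrand with zero integral'' to ``integrand zero a.e.'' and then to ``$\phi_\theta = 1$ a.e.'', which is exactly where the positivity bound [RE](i) and the strictness of the equality case of $\log x - x + 1 \le 0$ enter. One should also double-check that the split of the integrand into the three separately integrable terms used to define $\bbY$ is legitimate, which is guaranteed by [RE](iii)--(iv).
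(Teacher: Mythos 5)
Your proof is correct and follows essentially the same route as the paper: the elementary inequality $\log x - x + 1 \le 0$ with equality iff $x=1$, the lower bound [RE](i) to divide by $g(u;\theta^*)$ and to pass from a vanishing nonpositive integrand to $g(\cdot;\theta)=g(\cdot;\theta^*)$ a.e., and then [ID] to conclude. The extra remark on integrability of the separate terms via [RE](iii)--(iv) is a harmless (and welcome) addition that the paper leaves implicit.
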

\begin{proof}
  The first assertion follows from a direct calculation.
  Let us show the second assertion.
  By the inequality
  $\log x - x + 1\leq 0, \ x>0$, we have
  \(
  \bbY(\theta)\leq 0
  \) for all $\theta\in\ol{\Theta}$.
  Suppose $\bbY(\theta) = 0$ for some $\theta\in\ol{\Theta}$.
  Since the equality in $\log x - x + 1\leq 0$ holds only if $x=1$,
  \begin{equation*}
    \bbY(\theta) = \int_{|u|\leq r} g(u; \theta^*) \Bigl[
      \log\Bigl(
      \frac{g(u; \theta)}{g(u; \theta^*)}
      \Bigr)
      - \frac{g(u; \theta)}{g(u; \theta^*)} + 1
      \Bigr] du = 0
  \end{equation*}
  implies $g(\cdot; \theta) = g(\cdot; \theta^*)$ a.e. on $[-r, r]$, due to the assumption [RE](i).
  Therefore, we have $\theta=\theta^*$ by the assumption [ID].
\end{proof}
\halflineskip

\begin{proof}[Proof of Theorem \ref{thm_consistency}]
  By Lemmas \ref{lem_pointwise_lln_y} and \ref{lem_bound_first_diff_y},
  we have
  \(
  \sup_{\theta\in\Theta}|\bbY_n(\theta) - \bbY(\theta)| \to^p 0,
  \)
  thanks to the convexity and boundedness of $\Theta$ assumed in [PA].
  By the continuity of $\bbY_n(\cdot) - \bbY(\cdot)$ on $\ol{\Theta}$, we derive
  \(
  \sup_{\theta\in\ol{\Theta}}|\bbY_n(\theta) - \bbY(\theta)| \to^p 0.
  \)
  Together with Lemma \ref{lem_id_y}, we obtain the result.
\end{proof}
\halflineskip

Theorem \ref{thm_asymptotic_normality} is proven by a usual argument on the Z-estimation thanks to Theorem \ref{thm_consistency}, Lemmas \ref{lem_observed_information}, \ref{lem_residua_2}, and \ref{lem_score_convergence} below.

\begin{proof}[Proof of Theorem~4.2]
    By the consistency of the estimator sequence $\wih{\theta}_n$, 
    there exists a sequence of positive numbers $\delta_n>0$ converging to zero such that 
    $B(\theta^*, \delta_n)\subset \Theta$,
    $\lim_{n\to \infty} P[\wih{\theta}_n \in B(\theta^*, \delta_n)] = 1$.
    When $\wih{\theta}_n \in B(\theta^*, \delta_n)$, we have
    \[
        0 = \frac{1}{\sqrt{a_n}} \pt H_n(\wih{\theta}_n) 
        = \frac{1}{\sqrt{a_n}}\pt H_n(\theta^*) + \int_0^1 \frac{1}{a_n}\pt^2 H_n(t\theta^* + (1-t)(\wih{\theta}_n - \theta^*))dt
        \times 
        \sqrt{a_n}(\wih{\theta}_n - \theta^*)
    \]
    by Taylor's theorem.
    Then, setting
    \[
    \overline\Gamma_n
    := -\frac{1}{a_n}\int_0^1
       \pt^2\bbH_n\bigl(\theta^*+t(\widehat\theta_n-\theta^*)\bigr)\,dt,
    \]
    we obtain the exact identity
    \begin{equation}\label{eq:z-expansion}
    \overline\Gamma_n\,\sqrt{a_n}(\widehat\theta_n-\theta^*)
    = \frac{1}{\sqrt{a_n}}\pt\bbH_n(\theta^*).
    \end{equation}

    We show $\overline\Gamma_n\to^p\Gamma$.
    By Lemma~\ref{lem_observed_information},
    $-(1/a_n)\pt^2\bbH_n(\theta^*)\to^p\Gamma$.
    Moreover, by Lemma~\ref{lem_residua_2},
    \[
    \sup_{t\in[0, 1]}\Bigl|
      \frac{1}{a_n}\pt^2\bbH_n\bigl(\theta^*+t(\widehat\theta_n-\theta^*)\bigr)
      -\frac{1}{a_n}\pt^2\bbH_n(\theta^*)
    \Bigr|
    \leq \frac{1}{a_n}\sup_{\theta\in B(\theta^*, \delta_n)}
        \Bigl|
      \pt^2\bbH_n(\theta)
      -\pt^2\bbH_n(\theta^*)
        \Bigr|
    = o_p(1).
    \]
    Hence $\overline\Gamma_n=-(1/a_n)\pt^2\bbH_n(\theta^*)+o_p(1)\to^p\Gamma$.
    By [ID2], $\Gamma$ is positive definite, so
    $\overline\Gamma_n$ is invertible with probability tending to one and
    $\overline\Gamma_n^{-1}\to^p\Gamma^{-1}$.
    
    By Lemma~\ref{lem_score_convergence},
    $\frac{1}{\sqrt{a_n}}\pt\bbH_n(\theta^*)\to^d N(0,\Sigma)$ for some
    nonnegative definite matrix $\Sigma$.
    Therefore, from \eqref{eq:z-expansion} and Slutsky's theorem,
    \[
    \sqrt{a_n}(\widehat\theta_n-\theta^*)
    \;\;\to^d\;\;
    \Gamma^{-1}Z
    \sim N\!\bigl(0,\,\Gamma^{-1}\Sigma\Gamma^{-1}\bigr).
    \] 
    This proves the claim.
\end{proof}

\begin{lemma}\label{lem_observed_information}
  Under [WI2], [RE], and [MI], we have
  \(
  \frac{1}{a_n}\pt^2\bbH_n(\theta^*) \to^p -\Gamma
  \)
  as $n\to\infty$.
\end{lemma}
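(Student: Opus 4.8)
The plan is to compute $\pt^2 \bbH_n(\theta^*)$ explicitly by differentiating the expression for $\bbH_n(\theta)$ twice, split the result into a term coming from the $F_{2,n}^N$-part and a term coming from the deterministic integral, and then apply the pointwise ergodic lemma (Lemma \ref{lem_ergodic_second}) together with the consistency of the intensity estimators (Lemma \ref{lem_consistency_intensity}) to each piece. First I would recall that
\[
  \pt^2 \bbH_n(\theta)
  = F_{2,n}^N\bigl(\pt^2 \log g(\cdot; \theta)\bigr)
  - a_n \wih{\lambda}_{1,n}\wih{\lambda}_{2,n} \int_{|u|\leq r} \pt^2 g(u; \theta)\, du,
\]
where the interchange of $\pt^2$ with both the finite sum $F_{2,n}^N$ and the integral is justified by [RE](ii)--(iv) exactly as in \eqref{eq_interchange_g}. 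Note that the terms $\log\wih{\lambda}_{1,n} + \log\wih{\lambda}_{2,n}$ do not depend on $\theta$ and hence vanish after differentiation.

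Next I would divide by $a_n$ and treat the two summands separately. For the first, since [MI] guarantees $\|F_2^N(|\pt^2\log g(\cdot;\theta)|; C(0))\|_{2+\delta} < \infty$, in particular $\|F_2^N(|\pt^2\log g(\cdot;\theta^*)|; C(0))\|_2 < \infty$, so Lemma \ref{lem_ergodic_second} (applied with $h = \pt^2\log g(\cdot;\theta^*)$, using also [RE] to legitimize this choice of $h$) gives
\[
  \frac{1}{a_n} F_{2,n}^N\bigl(\pt^2 \log g(\cdot; \theta^*)\bigr)
  \to^p \lambda_1^*\lambda_2^* \int_{|u|\leq r} \pt^2\log g(u;\theta^*)\, g(u;\theta^*)\, du.
\]
For the second, Lemma \ref{lem_consistency_intensity} gives $\wih{\lambda}_{1,n}\wih{\lambda}_{2,n} \to^p \lambda_1^*\lambda_2^*$, while $\int_{|u|\leq r}\pt^2 g(u;\theta^*)\,du$ is a finite constant by [RE](iii); hence
\[
  \wih{\lambda}_{1,n}\wih{\lambda}_{2,n}\int_{|u|\leq r}\pt^2 g(u;\theta^*)\, du
  \to^p \lambda_1^*\lambda_2^* \int_{|u|\leq r}\pt^2 g(u;\theta^*)\, du.
\]
Combining these by the continuous mapping theorem (for subtraction), $\frac{1}{a_n}\pt^2\bbH_n(\theta^*)$ converges in probability to
\[
  \lambda_1^*\lambda_2^*\int_{|u|\leq r}\Bigl[\pt^2\log g(u;\theta^*)\, g(u;\theta^*) - \pt^2 g(u;\theta^*)\Bigr] du.
\]

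Finally I would identify this limit with $-\Gamma$. Writing $\pt^2\log g = \frac{\pt^2 g}{g} - \frac{(\pt g)^{\otimes 2}}{g^2}$ (valid pointwise on $[-r,r]$ by [RE](i)--(ii)), the integrand becomes $\pt^2 g(u;\theta^*) - \frac{\pt g(u;\theta^*)^{\otimes 2}}{g(u;\theta^*)} - \pt^2 g(u;\theta^*) = -\frac{\pt g(u;\theta^*)^{\otimes 2}}{g(u;\theta^*)}$, so the limit is $-\lambda_1^*\lambda_2^*\int_{|u|\leq r}\frac{\pt g(u;\theta^*)^{\otimes 2}}{g(u;\theta^*)}\, du = -\Gamma$, as claimed. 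The only mildly delicate point is the bookkeeping of the interchanges of differentiation with $F_{2,n}^N$ and with the integral sign, and the verification that the integrability hypothesis of Lemma \ref{lem_ergodic_second} is met at $\theta = \theta^*$; both are handled by the dominating functions $f_{B,1}, f_{B,2}$ furnished by [RE](iii)--(iv) and the moment bound in [MI]. There is no substantial obstacle beyond this routine verification.
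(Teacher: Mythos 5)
Your proposal is correct and follows essentially the same route as the paper's proof: decompose $\pt^2\bbH_n(\theta^*)$ into the $F_{2,n}^N(\pt^2\log g(\cdot;\theta^*))$ term and the integral term, justify the interchange of differentiation and integration via [RE](iii)--(iv), apply Lemma \ref{lem_ergodic_second} and Lemma \ref{lem_consistency_intensity} respectively, and identify the limit with $-\Gamma$ through the identity $(\pt^2\log g)\,g - \pt^2 g = -\pt g^{\otimes 2}/g$. No gaps.
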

\begin{proof}
  First, we mention that the assumptions [RE](iii) and [RE](iv) allow us interchanging the differentiation and integral as
  \begin{equation}\label{eq_interchange_g_2}
    \pt^2 \int_{|u|\leq r} g(u; \theta)du = \int_{|u|\leq r} \pt^2 g(u; \theta)du.
  \end{equation}
  Then, we observe that
  \begin{align*}
     & \quad \frac{1}{a_n}\pt^2\bbH_n(\theta^*)                                                          \\
     & = \frac{1}{a_n}F_{2, n}^N(\pt^2\log g(\cdot; \theta^*))
    - \wih{\lambda}_{1, n}\wih{\lambda}_{2, n}
    \pt^2\int_{|u|\leq r}g(u; \theta^*)du                                                                \\
     & = \frac{1}{a_n}F_{2, n}^N(\pt^2\log g(\cdot; \theta^*))
    - \wih{\lambda}_{1, n}\wih{\lambda}_{2, n}
    \int_{|u|\leq r}\pt^2g(u; \theta^*)du
    \quad (\because \text{(\ref{eq_interchange_g_2})})                                                   \\
     & = \lambda_1^*\lambda_2^*
    \int_{|u|\leq r} (\pt^2\log g(u; \theta^*))g(u; \theta^*)du
    - \lambda_1^*\lambda_2^*
    \int_{|u|\leq r}\pt^2g(u; \theta^*)du
    + o_p(1)
    \quad (\because \text{Lemmas \ref{lem_consistency_intensity} and \ref{lem_ergodic_second}})          \\
     & = \lambda_1^*\lambda_2^*
    \int_{|u|\leq r} \Bigl[(\pt^2\log g(u; \theta^*))g(u; \theta^*)-\pt^2g(u; \theta^*)\Bigr]du + o_p(1) \\
     & = -\lambda_1^*\lambda_2^*
    \int_{|u|\leq r} \frac{\pt g(u; \theta^*)^{\otimes 2}}{g(u; \theta^*)}du + o_p(1) = -\Gamma + o_p(1).
  \end{align*}
  Therefore, we obtain the result.
\end{proof}
\halflineskip

\begin{lemma}\label{lem_residua_2}
  Assume the conditions [WI2], [MI] and [RE].
  Let $V_n\subset \Theta$ be a sequence of neighborhood of $\theta^*$ shrinking to $\{\theta^*\}$.
  Then, we have
    \[
        \frac{1}{a_n}\sup_{\theta\in V_n}
            \Bigl|
          \pt^2\bbH_n(\theta)
          -\pt^2\bbH_n(\theta^*)
            \Bigr| = o_p(1)
    \]
  as $n\to\infty$.
\end{lemma}

\begin{proof}
By [RE](iii)–(iv) and dominated convergence, differentiation and integration can be interchanged:
\[
\pt^2\!\int_{|u|\le r} g(u;\theta)\,du \;=\; \int_{|u|\le r}\pt^2 g(u;\theta)\,du.
\]
Hence
\[
\frac{1}{a_n}\pt^2\bbH_n(\theta)
=
\frac{1}{a_n}F_{2,n}^N\!\bigl(\pt^2\log g(\cdot;\theta)\bigr)
\;-\;
\wih\lambda_{1,n}\,\wih\lambda_{2,n}\int_{|u|\le r}\pt^2 g(u;\theta)\,du.
\]
Let $V_n\subset\Theta$ be neighborhoods shrinking to $\{\theta^*\}$. Then
\begin{align*}
\frac{1}{a_n}\sup_{\theta\in V_n}\Bigl|\pt^2\bbH_n(\theta)-\pt^2\bbH_n(\theta^*)\Bigr|
&\le
\underbrace{\sup_{\theta\in V_n}\Bigl|
\frac{1}{a_n}F_{2,n}^N\!\bigl(\pt^2\log g(\cdot;\theta)-\pt^2\log g(\cdot;\theta^*)\bigr)
\Bigr|}_{=:E_n}\\
&\quad+
\underbrace{\bigl|\wih\lambda_{1,n}\wih\lambda_{2,n}\bigr|\,
\sup_{\theta\in V_n}\Bigl|
\int_{|u|\le r}\bigl\{\pt^2 g(u;\theta)-\pt^2 g(u;\theta^*)\bigr\}\,du
\Bigr|}_{=:R_n}.
\end{align*}

First, we will evaluate $R_n$.
Define
\[
k_n(u):=\sup_{\theta\in V_n}\bigl|\pt^2 g(u;\theta)-\pt^2 g(u;\theta^*)\bigr|,\quad u\in[-r,r].
\]
For each fixed $u$, the map $\theta\mapsto\pt^2 g(u;\theta)$ is continuous on $\Theta$ by [RE](ii); since $V_n\downarrow\{\theta^*\}$, we have $k_n(u)\to 0$ pointwise in $u$. Moreover, by [RE](iii),
\(
0\le k_n(u)\le 2\sup_{\theta\in\Theta}|\pt^2 g(u;\theta)|\le 2f_{B,1}(u)
\)
with $\int_{|u|\le r}f_{B,1}(u)\,du<\infty$.
Thus, by dominated convergence,
\[
    K_n := \int_{|u|\leq r}k_n(u)\,du \to 0.
\]
By Lemma \ref{lem_consistency_intensity}, $\wih\lambda_{1,n}\wih\lambda_{2,n}=O_p(1)$ holds, 
so that we have 
\[R_n \leq K_n \times \wih\lambda_{1,n}\wih\lambda_{2,n} = o_p(1)\times O_p(1)=o_p(1).\]

Next, we will deal with $E_n$.
Set
\[
l_n(u):=\sup_{\theta\in V_n}\bigl|\pt^2\log g(u;\theta)-\pt^2\log g(u;\theta^*)\bigr|,
\quad u\in[-r,r].
\]
By [RE](i) and (ii), $l_n(u)\to0$ pointwise in $u$, and by [RE](iv),
\(
0\le l_n(u)\le 2\sup_{\theta\in\Theta}|\pt^2\log g(u;\theta)|\le 2f_{B,2}(u),
\)
with $\int_{|u|\le r}f_{B,2}(u)g(u;\theta^*)\,du<\infty$.
Using linearity, positivity, and the triangle inequality for $F_2^N(\cdot;W)$, we have
\[
E_n \;\le\; \frac{1}{a_n}F_{2,n}^N\!\bigl(l_n\bigr) =: S_n\geq 0.
\]
By Campbell's formula \eqref{cross_intensity_formula} and dominated convergence with 
$l_n(u)\,g(u;\theta^*)\leq 2f_{B,2}(u)g(u;\theta^*), u\in [-r, r]$, 
we have
\[
E[S_n]
= \lambda_1^*\lambda_2^* \int_{|u|\le r} l_n(u)\,g(u;\theta^*)\,du
\to 0.
\]
Therefore $S_n\to^p0$, and hence $E_n=o_p(1)$.

\medskip
Finally, combining the bounds for $E_n$ and $R_n$,
\[
\frac{1}{a_n}\sup_{\theta\in V_n}\Bigl|\pt^2\bbH_n(\theta)-\pt^2\bbH_n(\theta^*)\Bigr|
\;=\;E_n+R_n
\;=\;o_p(1).
\]
\end{proof}



\begin{lemma}\label{thm_bolthausen_clt}
  Suppose $X_l=(X_{l, 1}, \cdots, X_{l, d}), l\in\bbZ$ is a stationary $\bbR^d$-valued random field on $\bbZ$,
  $E[X_1] = 0$, and $S_n = \sum_{l=1}^n X_l$.
  Assume that there exists some $\delta>0$ such that
  $\|X_0\|_{2+\delta}< \infty$ and $\sum_{m=1}^{\infty}\tilde{\alpha}_{2, \infty}^X(m)^{\frac{\delta}{2+\delta}}<\infty$.
  Then, we have the convergence
  \(
  \frac{1}{n}\mathop{\text{Var}}[S_n] \to \Sigma
  \)
  as $n\to\infty$
  for some non-negative definite matrix $\Sigma$.
  Moreover, we have
  \(
  \frac{1}{\sqrt{n}}S_n \to^d N(0, \Sigma)
  \)
  as $n\to\infty$.
\end{lemma}
\begin{proof}
  Let $S_{n, i} = \sum_{l=1}^n X_{l, i}$, $i=1, \ldots, d$.
  For $i, j=1, \ldots, d$, we have
  \begin{align*}
    \frac{1}{n}\mathop{\text{Cov}}[S_{n, i}, S_{n, j}]
     & = \frac{1}{n}\sum_{l_1=1}^n \sum_{l_2=1}^n \mathop{\text{Cov}}[X_{l_1, i}, X_{l_2, j}] \\
     & = \mathrm{Cov}[X_{1, i}, X_{1, j}]
    + 2\sum_{k=1}^{n-1} \Bigl(1-\frac{k}{n}\Bigr)\mathop{\text{Cov}}[X_{1, i}, X_{k, j}],
  \end{align*}
  using the stationarity.
  By the mixing and moment assumptions, we have
  \[
    \sum_{k=1}^{n-1}\Bigl(1-\frac{k}{n}\Bigr)|\mathop{\text{Cov}}[X_{1, i}, X_{k, j}]|
    \lesssim \sum_{k=1}^{\infty} \tilde{\alpha}_{1, 1}^X(k)^{\frac{\delta}{2+\delta}} < \infty.
  \]
  Thus, $\sum_{k=1}^{\infty} (1-\frac{k}{n})\mathop{\text{Cov}}[X_{1, i}, X_{k, j}]$ converges, and we have
  \(
  \frac{1}{n}\mathop{\text{Cov}}[S_{n, i}, S_{n, j}] \to \sigma_{i, j}
  \)
  as $n\to\infty$ for some $\sigma_{ij} \in \bbR$.
  Since the limit $\Sigma = (\sigma_{i, j})$ inherits the non-negative definiteness of $\frac{1}{n}\mathop{\text{Var}}[S_n]$, the first assertion follows.

  Next, we will deal with the second assertion.
  By the Cramér-Wold device, it is sufficient to show
  \begin{equation}\label{eq_bolthausen_clt_cr_device}
    \forall a\in\bbR^d: \quad
    \frac{1}{\sqrt{n}} a'S_n \to^d N(0, a'\Sigma a)
  \end{equation}
  as $n\to\infty$.
  From the first assertion, we have
  \(
  \frac{1}{n}\mathop{\text{Var}}[a' S_n] = a' \Bigl(\frac{1}{n}\mathop{\text{Var}}[S_n]\Bigr) a
  \to a'\Sigma a
  \) as $n\to\infty$.
  If $a'\Sigma a > 0$, we can apply the theorem in Bolthausen~\cite{bolthausen1982central}
  so that we obtain (\ref{eq_bolthausen_clt_cr_device}).
  If $a'\Sigma a = 0$, we also have (\ref{eq_bolthausen_clt_cr_device})
  because
  $E[a'S_n] = 0$ and $\mathop{\text{Var}}[\frac{1}{\sqrt{n}}a' S_n] \to a'\Sigma a = 0$ as $n\to\infty$.
  Therefore, we obtain the result.
\end{proof}
\halflineskip

\begin{lemma}\label{lem_score_convergence}
  Under [WI2], [MI] and [RE],
  we have the convergence
  \(
  \frac{1}{\sqrt{a_n}}\pt\bbH_n(\theta^*) \to^d N(0, \Sigma)
  \)
  as $n\to\infty$
  for some non-negative definite symmetric matrix $\Sigma$.
\end{lemma}
\begin{proof}
  Firstly,
  $\pt\bbH_n(\theta^*)$ can be decomposed as
  \begin{align}\label{al_general_clt_proof_1}
    \frac{1}{\sqrt{a_n}}\pt\bbH_n(\theta^*)
     & = \frac{1}{\sqrt{a_n}}\Bigl(\sum_{l\in D_n} Z_l\Bigr)
    - \sqrt{a_n}(\wih{\lambda}_{1, n}\wih{\lambda}_{2, n} - \lambda_1^*\lambda_2^*)
    \int_{|u|\leq r}\pt g(u; \theta^*)du      \nonumber                                                               \\
     & \quad - \frac{1}{\sqrt{a_n}}
    F_2^N\Bigl(\pt\log g(\cdot; \theta^*); \Bigl(\bigsqcup_{l\in D_n}C(l)\Bigr) \setminus W_n\ominus r\Bigr)\nonumber \\
     & \quad + \frac{1}{\sqrt{a_n}}(\#D_n - a_n)\lambda_1^*\lambda_2^*
    \int_{|u|\leq r}\pt g(u; \theta^*)du
  \end{align}
  where
  \begin{align*}
    Z_l
     & = F_2^N\Bigl(\pt\log g(\cdot; \theta^*); C(l)\Bigr)
    - \lambda_1^*\lambda_2^*\int_{|u|\leq r}\pt g(u; \theta^*)du \\
     & = F_2^N\Bigl(\pt\log g(\cdot; \theta^*); C(l)\Bigr)
    - E\Bigl[F_2^N\Bigl(\pt\log g(\cdot; \theta^*); C(l)\Bigr)\Bigr], \quad l\in\bbZ.
  \end{align*}
  Here, the third and fourth terms in the RHS of (\ref{al_general_clt_proof_1}),
  are $o_p(1)$ because of [WI2]. (Regarding the third term, consider taking expectation and use the assumption [RE](iv).)
  Thus, we have
  \begin{equation}\label{eq_score_modification}
    \frac{1}{\sqrt{a_n}}\pt\bbH_n(\theta^*) = \frac{1}{\sqrt{a_n}}\sum_{l\in D_n} Z_l - \sqrt{a_n}(\wih{\lambda}_{1, n}\wih{\lambda}_{2, n} - \lambda_1^*\lambda_2^*)
    + o_p(1).
  \end{equation}
  Besides, for $i=1, 2$, we have
  \begin{align}\label{al_error_intensity}
    \sqrt{a_n}(\wih{\lambda}_{i, n} - \lambda_i^*)
     & = \frac{1}{\sqrt{a_n}}\sum_{l\in D_n} \Bigl(N_i(C(l)) - \lambda_i^*\Bigr)
    + \frac{1}{\sqrt{a_n}} N_i\Bigl(\Bigl(\bigsqcup_{l\in D_n}C(l)\Bigr) \setminus W_n\ominus r\Bigr)
    + \frac{\lambda_i^*}{\sqrt{a_n}}(\#D_n - a_n)  \nonumber                              \\
     & =\frac{1}{\sqrt{a_n}}\sum_{l\in D_n} \Bigl(N_i(C(l)) - \lambda_i^*\Bigr) + o_p(1).
  \end{align}
  Now, we would like to apply Lemma \ref{thm_bolthausen_clt} to the $\bbR^{p+2}$-valued random field
  \begin{equation*}
    X_l =
    \left(
    \begin{array}{c}
      N_1(C(l)) - \lambda_1^* \\
      N_2(C(l)) - \lambda_2^* \\
      Z_l
    \end{array}
    \right), \quad l\in\bbZ,
  \end{equation*}
  so we will check the assumptions.
  First, since $N$ is stationary, $X=\{X_l\}_{l\in\bbZ}$ is also stationary.
  Second, because $N_i(C(l))$ and $Z_l$ only depend on $N\cap{(C(l)\oplus r)}$, we have
  $\sum_{m=1}^{\infty}\alpha_{2, \infty}^X(m)^{\frac{\delta}{2+\delta}}<\infty$ by (\ref{eq_note_on_mixing1}) and the assumption [MI].
  Finally, the moment condition $\|X_0\|_{2+\delta}<\infty$ also follows from the assumption [MI].
  Then, the additional assumption [WI2] enable us to apply Lemma \ref{thm_bolthausen_clt}, and then we have
  \begin{equation*}
    \frac{1}{\sqrt{a_n}}\sum_{l\in D_n}
    \left(
    \begin{array}{c}
        N_1(C(l)) - \lambda_1^* \\
        N_2(C(l)) - \lambda_2^* \\
        Z_l
      \end{array}
    \right)
    =
    \left(
    \begin{array}{c}
        \sqrt{a_n}(\wih{\lambda}_{1, n} - \lambda_1^*) \\
        \sqrt{a_n}(\wih{\lambda}_{2, n} - \lambda_2^*) \\
        \frac{1}{\sqrt{a_n}}\sum_{l\in D_n} Z_l
      \end{array}
    \right)
    \to^d
    \left(
    \begin{array}{c}
        \Delta_1 \\
        \Delta_2 \\
        \Delta^*
      \end{array}
    \right)
    \sim N(0, \Sigma^*),
  \end{equation*}
  where $\Sigma^*$ is some nonnegative definite matrix.
  Using the delta method, we have
  \begin{equation*}
    \left(
    \begin{array}{c}
      \sqrt{a_n}(\wih{\lambda}_{1, n}\wih{\lambda}_{2, n} - \lambda_1^*\lambda_2^*) \\
      \frac{1}{\sqrt{a_n}}\sum_{l\in D_n} Z_l
    \end{array}
    \right)
    \to^d
    \left(
    \begin{array}{c}
      \lambda_2^*\Delta_1 + \lambda_1^*\Delta_2 \\
      \Delta^*
    \end{array}
    \right).
  \end{equation*}
  Since
  \begin{equation*}
    \frac{1}{\sqrt{a_n}}\pt\bbH_n(\theta^*)
    =  \frac{1}{\sqrt{a_n}}\sum_{l\in D_n} Z_l
    - \sqrt{a_n}(\wih{\lambda}_{1, n}\wih{\lambda}_{2, n} - \lambda_1^*\lambda_2^*)
    \int_{|u|\leq r}\pt g(u; \theta^*)du + o_p(1)
  \end{equation*}
  from (\ref{al_error_intensity}) and (\ref{eq_score_modification}),
  we obtain
  \begin{equation*}
    \frac{1}{\sqrt{a_n}}\pt\bbH_n(\theta^*) \to^d \Delta^* - (\lambda_2^*\Delta_1 + \lambda_1^*\Delta_2)
    \int_{|u|\leq r}\pt g(u; \theta^*)du \sim N(0, \Sigma).
  \end{equation*}
\end{proof}
\halflineskip

\subsection{Proof of Theorem \ref{thm_noisy_NS_asymptotics}}\label{subsec_proof_thm_noisy_symptotics}
First, we present a brief sketch of the proof.
We reduce the consistency and asymptotic normality of the QMLE for the NBNSP to the general results in Section~\ref{sec_general_theory2} (Theorems~\ref{thm_consistency} and~\ref{thm_asymptotic_normality}). To this end, we verify the conditions [PA], [RE], and [MI] for the NBNSP. The conditions [ID] and [ID2] will be verified for specific models in Section \ref{sec_specific_models}. The verification is organized as follows.

\medskip
\noindent\textbf{1) Parameter space.}
From [NS](i), the parameter domain is a product of bounded, open, convex sets; hence [PA] holds (Lemma~\ref{lem_NS_PA}).

\medskip
\noindent\textbf{2) Regularity under possibly diverging kernels.}
The cross-correlation function has the convolution form
\[
g(u;\theta)=1+a\int f_1(s;\tau_1)f_2(u+s;\tau_2)\,ds.
\]
Unlike standard settings, \(g\) and its log-derivatives may diverge at \(u=0\). Lemma~\ref{lem_NS_RE} ensures the existence of integrable envelopes \(f_{B,1},f_{B,2}\) on \([-r,r]\) such that
\[
\sup_{\theta}\bigl|\partial_{\theta}^{i}g(u;\theta)\bigr|\le f_{B,1}(u),\quad
\sup_{\theta}\bigl|\partial_{\theta}^{i}\log g(u;\theta)\bigr|\le f_{B,2}(u),\quad i=0,1,2,
\]
with \(\int f_{B,1}<\infty\) and \(\int f_{B,2}(\cdot)\,g(\cdot;\theta^\ast)<\infty\), which are required from the conditions in [RE]. The key step is a precise near-origin analysis of the kernel convolution. 

\medskip
\noindent\textbf{3) Mixing and high-order moments.}
For the signal part (a bivariate Neyman–Scott process), Lemma~\ref{lem_mixing_NS} bounds the \(\alpha\)-mixing rate by the tail probabilities of the dispersal kernels. Lemma~\ref{lem_ns_existence_of_moment} establishes the existence of higher-order moments. For the noise part, [NS](iv) imposes analogous moment/mixing conditions. 
Then, the bound required in [MI], that is, 
\[
\bigl\|F_N^2\!\bigl(|\partial_{\theta}^{k}\log g|;\,C(0)\bigr)\bigr\|_{2+\delta}<\infty\quad(k=0,1,2,3)
\]
is obtained thanks to Lemmas~\ref{lem_log_ccf_derivative_estimate_parameter_wise}, \ref{lem_high_moment_NS}, \ref{lem_high_moment_NS_2}, and \ref{lem_high_moment_NS_3} by expanding moments with factorial cumulant measures and by using H\"older’s inequality repeatedly together with the fact that \(f_i\) are probability densities; this eliminates integrals over parents and yields integrability even if the kernel $f_i$ is not bounded. These ingredients imply [MI] (Proposition~\ref{lem_NS_MI}).

\halflineskip
We now proceed to the proof.

\begin{lemma}\label{lem_NS_PA}
  Under [NS](i), the condition [PA] holds.
\end{lemma}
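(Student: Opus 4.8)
The plan is to observe that the parameter space is the Cartesian product $\Theta = \cala \times \calt_1 \times \calt_2 \subset \bbR^p$ with $p = 1 + \sfp_1 + \sfp_2$, and to verify each of the three properties required by [PA] by showing it is inherited by a finite Cartesian product from its factors, which are bounded, open, and convex by [NS](i).

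First I would treat boundedness: if each factor is contained in a ball of finite radius, then $\Theta$ is contained in the product of those balls, hence in a ball of finite radius in $\bbR^p$, since the Euclidean norm on $\bbR^p = \bbR \times \bbR^{\sfp_1} \times \bbR^{\sfp_2}$ is controlled by the sum of the norms of the three blocks. Next, for openness, I would use that the product topology on $\bbR^p$ coincides with the Euclidean topology and that a finite product of open sets is open in the product topology; concretely, given $\theta = (a, \tau_1, \tau_2) \in \Theta$, one picks radii $r_0, r_1, r_2 > 0$ so that the corresponding open balls lie in $\cala$, $\calt_1$, $\calt_2$ respectively, and then the open ball of radius $\min\{r_0, r_1, r_2\}$ around $\theta$ is contained in $\Theta$. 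Finally, for convexity, I would take $\theta, \theta' \in \Theta$ and $t \in [0,1]$, write them blockwise as $\theta = (a, \tau_1, \tau_2)$ and $\theta' = (a', \tau_1', \tau_2')$, and note that the convex combination $t\theta + (1-t)\theta'$ has blocks $ta + (1-t)a'$, $t\tau_1 + (1-t)\tau_1'$, $t\tau_2 + (1-t)\tau_2'$, each lying in the respective convex factor, so that $t\theta + (1-t)\theta' \in \Theta$.

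There is no substantive obstacle here: the statement is a purely formal consequence of the definition $\Theta = \cala \times \calt_1 \times \calt_2$ together with the stability of boundedness, openness, and convexity under finite Cartesian products, so the "proof" amounts to recording these three one-line verifications.
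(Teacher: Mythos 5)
Your proposal is correct and follows exactly the same route as the paper, which disposes of the lemma in one line by noting that a finite Cartesian product of bounded, open, convex subsets of Euclidean spaces is again bounded, open, and convex. Your three blockwise verifications are just the expanded form of that observation.
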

\begin{proof}
  The product of open, convex, and bounded subsets from Euclidean space is also open, convex, and bounded.
\end{proof}
\halflineskip

\begin{lemma}\label{lem_NS_RE}
  Under [NS](ii), our model satisfies the conditions [RE].
\end{lemma}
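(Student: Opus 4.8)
The plan is to verify the four items of [RE] in turn; (i)--(iii) are short, and essentially all the work is in (iv). Throughout write $\Psi(u;\tau_1,\tau_2)=\int_{\bbR}f_1(s;\tau_1)f_2(u+s;\tau_2)\,ds$, so that by \eqref{eq_ns_cross_corr} $g(u;\theta)=1+a\,\Psi(u;\tau_1,\tau_2)$ with $\theta=(a,\tau_1,\tau_2)$, and set $c_i:=\inf_{\tau_i\in\ol{\calt_i}}h_{i,2}(\tau_i)>0$. Since $f_1,f_2\ge 0$ and $a>0$ we have $g(\cdot;\theta)\ge 1$ for all $\theta$, which gives [RE](i) at once. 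The workhorse for the remaining items is a family of uniform $L^1$ majorants for the kernel derivatives: differentiating the explicit form \eqref{eq_ns_cond_1}, writing $u^{h_{i,2}(\tau_i)-1}=\exp((h_{i,2}(\tau_i)-1)\log u)$ and using that $h_{i,1}$ and its $\tau_i$-derivatives up to order $3$ are bounded, $\inf h_{i,1}>0$, $h_{i,2}\in C^3(\ol{\calt_i})$, and \eqref{eq_ns_assumption1}, one obtains for $j=0,1,2,3$ a function
\[
  \phi_i^{(j)}(s)\;=\;C\,(1+|\log s|)^{j}\,s^{c_i-1}\,1_{(0,1)}(s)\;+\;C\,\tilde f(s)\,1_{[1,\infty)}(s)\;\in\;L^1(\bbR)
\]
(integrable exactly because $c_i>0$ and $\tilde f\in L^1([1,\infty))$), with $\sup_{\tau_i\in\calt_i}|\partial_{\tau_i}^{j}f_i(\cdot;\tau_i)|\le\phi_i^{(j)}$.

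From here [RE](ii) is dominated convergence: $\theta\mapsto g(u;\theta)$ is affine in $a$; continuity in $(\tau_1,\tau_2)$ on $\ol\Theta$ follows from pointwise-in-$s$ continuity of $f_i(s;\cdot)$ on $\ol{\calt_i}$ together with the integrable majorant $\phi_1^{(0)}(s)\phi_2^{(0)}(u+s)$; and differentiation under the integral up to three times, each interchange licensed by a $\phi_i^{(j)}$, yields $g(u;\cdot)\in C^3(\Theta)$. For [RE](iii) I would set $f_{B,1}(u):=C(1+\bar a)\max_{j_1+j_2\le 3}\int_{\bbR}\phi_1^{(j_1)}(s)\phi_2^{(j_2)}(u+s)\,ds$ with $\bar a:=\sup\cala$: each $\partial_\theta^{i}g$ ($i\le 3$) is, after differentiating under the integral, a finite sum of terms carrying at most one power of $a$ times $\int\partial_{\tau_1}^{j_1}f_1(s;\tau_1)\partial_{\tau_2}^{j_2}f_2(u+s;\tau_2)\,ds$ with $j_1+j_2\le 3$, hence $\le f_{B,1}(u)$ uniformly in $\theta$, and $\int_{|u|\le r}f_{B,1}(u)\,du\le C(1+\bar a)\sum_{j_1+j_2\le 3}\|\phi_1^{(j_1)}\|_1\|\phi_2^{(j_2)}\|_1<\infty$ by Tonelli.

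The real content is [RE](iv), and the obstacle is the $1/g$ in the derivatives of $\log g$: near $u=0$ the weight $g(\cdot;\theta^*)$ and the majorant $f_{B,1}$ can each blow up like a negative power of $|u|$, so the crude bound $|\partial_\theta^{i}\log g|\lesssim(1+f_{B,1})^{3}$ is not integrable against $g(\cdot;\theta^*)$. The fix is that the singular factor $u^{h_{i,2}(\tau_i)-1}$ cancels in the logarithmic derivative: on $(0,1)$, $\partial_{\tau_i}\log f_i(s;\tau_i)=\partial_{\tau_i}h_{i,1}/h_{i,1}+(\partial_{\tau_i}h_{i,2})\log s$, and its higher $\tau_i$-derivatives are $O(1+|\log s|)$, so that $|\partial_{\tau_i}^{j}f_i(s;\tau_i)|\le C(1+|\log s|)^{j}f_i(s;\tau_i)$ on $(0,1)$, while the $s\ge 1$ contributions are harmless --- for $u\in[-r,r]$ they are bounded, e.g. $\int_{[1,\infty)}\tilde f(s)f_2(u+s)\,ds\le\|\tilde f\|_\infty$. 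The genuinely technical step is then to feed this into $\Psi$: one needs a two-sided, $\tau$-uniform control of $\Psi(u;\tau_1,\tau_2)$ near the origin of the shape $\Psi\asymp c(\tau)|u|^{h_{1,2}(\tau_1)+h_{2,2}(\tau_2)-1}$ (with a logarithm in place of the power in the critical case, bounded otherwise, and $c(\tau)$ bounded away from $0$ and $\infty$), together with $|\partial_{\tau}^{(j)}\Psi|\le C(1+|\log|u||)^{j}\Psi+(\text{bounded})$ near $0$; granted this, and using $g\ge 1$ (and $\inf\cala>0$ for the terms that differentiate $a$), one obtains $|\partial_\theta^{k}g(u;\theta)|\le C(1+|\log|u||)^{k}g(u;\theta)$ for $k=1,2,3$. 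Expanding $\partial_\theta^{i}\log g$ by Fa\`a di Bruno as a sum over set partitions $\pi$ of $\big(\prod_{B\in\pi}\partial_\theta^{|B|}g\big)\big/g^{|\pi|}=\prod_{B\in\pi}\big(\partial_\theta^{|B|}g/g\big)$ then gives, for $i=0,1,2,3$, $|\partial_\theta^{i}\log g(u;\theta)|\le C(1+|\log|u||)^{i}\le C(1+|\log|u||)^{3}=:f_{B,2}(u)$, uniformly in $\theta$ (for $i=0$ use $0\le\log g\le a\Psi$ and the same bound on $\Psi$).

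Finally one checks $\int_{|u|\le r}f_{B,2}(u)g(u;\theta^*)\,du<\infty$. The part coming from the $1$ in $g=1+a^*\Psi$ is trivial since $(1+|\log|u||)^{3}$ is locally integrable; for the rest, as $f_2(\cdot;\tau_2^*)$ is a probability density, Tonelli gives
\[
  \int_{|u|\le r}(1+|\log|u||)^{3}\Psi(u;\tau_1^*,\tau_2^*)\,du
  =\int_{\bbR}f_1(s;\tau_1^*)\Big(\int_{|u|\le r}(1+|\log|u||)^{3}f_2(u+s;\tau_2^*)\,du\Big)ds,
\]
and by Hölder together with translation invariance the inner integral is at most $C_q\,\|f_2(\cdot;\tau_2^*)\|_{L^q(\bbR)}$ for a suitable $q>1$, where $C_q$ is a finite constant (the $L^{q'}([-r,r])$-norm of $u\mapsto(1+|\log|u||)^{3}$); this is finite by Remark \ref{rem_mom_NS}, and since $\int_{\bbR}f_1(s;\tau_1^*)\,ds=1$ the whole quantity is finite. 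This gives [RE](iv), completing the proof. The hard part, as flagged, is the uniform near-origin asymptotics of the cross-correlation $\Psi$ and of its $\tau$-derivatives; everything else is bookkeeping with the explicit form \eqref{eq_ns_cond_1}.
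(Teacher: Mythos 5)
Your treatment of [RE](i)--(iii) matches the paper's: the same $L^1$ majorants $s^{c_i-1}$ times powers of $|\log s|$ on $(0,1)$ plus $\tilde f$ on $[1,\infty)$, dominated convergence for smoothness, and a convolution bound for $f_{B,1}$ (the paper absorbs the logarithms into a slightly smaller exponent $\beta$, you keep them explicit; either works). Your strategy for [RE](iv) is also the right one, and your key observation --- that $\partial_{\tau_i}\log f_i(s;\tau_i)=\partial_{\tau_i}h_{i,1}/h_{i,1}+(\partial_{\tau_i}h_{i,2})\log s$ so the singular factor cancels and $|\partial_{\tau_i}^{j}f_i(s;\tau_i)|\le C(1+|\log s|)^{j}f_i(s;\tau_i)$ on $(0,1)$ --- is exactly the mechanism the paper exploits. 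Your final integrability check of $f_{B,2}\,g(\cdot;\theta^*)$ via Tonelli, H\"older and Remark \ref{rem_mom_NS} is correct, and your $f_{B,2}(u)=C(1+|\log|u||)^{3}$ is in fact sharper than the paper's $C''(|u|^{-\epsilon_0}+1)$.

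However, there is a genuine gap: the step you introduce with ``granted this'' --- the two-sided, $\tau$-uniform control $\Psi(u;\tau_1,\tau_2)\gtrsim c(\tau)|u|^{h_{1,2}(\tau_1)+h_{2,2}(\tau_2)-1}$ near $0$ (with $c(\tau)$ bounded below) together with $|\partial_\tau^{(j)}\Psi|\le C(1+|\log|u||)^{j}\Psi+O((1+|\log|u||)^{j})$ --- is not an auxiliary detail but the entire analytic content of [RE](iv), and you do not prove it. This is precisely where the paper works: it splits into the cases $h_{1,2}(\tau_1)+h_{2,2}(\tau_2)\le 1$ and $>1$, uses the substitution $s=ut$ to show the derivative integral $I(u;\tau_1,\tau_2)$ is $\lesssim u^{h_{1,2}+h_{2,2}-1-3\epsilon}$ (resp.\ $\lesssim(\log(u^{-1}+2))^{3}u^{-\epsilon_0/6}$), and proves the matching lower bound $p(u;\tau_1,\tau_2)\gtrsim u^{h_{1,2}+h_{2,2}-1}$ in the singular case (resp.\ uses $1+ap\ge 1$ in the other). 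Your asserted estimates are true and would follow from the same change of variables, and your route would then close via Fa\`a di Bruno; but as written the proof of the lemma's hardest claim is assumed rather than given. One further point you handle correctly and the paper glosses over: the term $\partial_a\log g=\Psi/(1+a\Psi)$ genuinely needs $\inf\cala>0$ (or the paper's uniform-in-$a$ lower bound on $1+ap$) to be controlled uniformly over $\Theta$; flagging this is a small improvement on the paper's presentation.
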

\begin{proof}
  In this proof, the supremum about $\tau_i$ is taken on $\ol{\calt_i}$ for $k=0$ and on $\calt_i$ for $k=1, 2, 3$ so long as there is no risk of confusion. Also, the notation $X \lesssim Y$ means that there is a constant $C > 0$ such that $X \leq CY$, where $C$ depends on neither $u$ nor the parameters $\tau_1$, $\tau_2$, and $a$.
  Since $g(u; \theta)\geq 1$, we have [RE](i).
  Next, we will deal with [RE](ii) and (iii).
  Let $i=1, 2$ and $k=0, 1, 2, 3$.
  Using the Leibniz rule, we have
  \begin{equation}\label{eq_ns_kernel_diff}
    \ptaui^k f_i(u; \tau_i)=
    \left\{
    \begin{array}{c}
      \sum_{j=0}^k \binom{k}{j}
      \Bigl(\ptaui^j h_{i, 1}(u; \tau_i)\Bigr)
      \Bigl(\ptaui^{k-j} h_{2, i}(\tau_i)\Bigr)
      (\log{u})^{k-j} u^{h_{i, 2}(\tau_i) - 1}, \quad 0<u<1, \\
      \ptaui^k h_{i, 3}(u; \tau_i), \quad u>1,               \\
      0, \quad u\leq 0.
    \end{array}
    \right.
  \end{equation}
  Then, by the assumption for $h_{i, 1}$ and $h_{i, 2}$, there exist $1/2 >\beta>0$ such that
  \begin{equation}\label{eq_ns_proof_assumption1}
    \sup_{\tau_i}|\ptaui^k f_i(u; \tau_i)|
    \lesssim u^{\beta-1}, \quad 0<u<1
  \end{equation}
  because $|\log u|\leq u^{-\epsilon}, 0<u<1$ for an arbitrary small $\epsilon >0$.
  Combining (\ref{eq_ns_assumption1}) and (\ref{eq_ns_proof_assumption1}), we have
  \begin{equation}\label{eq_ns_proof_assumption2}
    \sup_{\tau_i}|\ptaui^k f_i(u; \tau_i)|
    \lesssim u^{\beta-1}1_{(0, 1)}(u) + \tilde{f}(u)1_{[1, \infty)}
    =: \wit{f}_0(u), \ u\in\bbR
  \end{equation}
  for some $1/2>\beta>0$ and a nonnegative bounded $L^1([1, \infty))$ function $\tilde{f}$.
  Thus, we have
  \begin{equation}\label{eq_ns_proof_assumption3}
    \sup_{\tau_1, \tau_2}|\partial_{\tau_1}^{k_1}f_1(s; \tau_1)\partial_{\tau_2}^{k_2}f_2(u+s; \tau_2)|
    \leq \wit{f}_0(s)\wit{f}_0(u+s), \quad s\in\bbR
  \end{equation}
  for all $u\in\bbR$ and $k_1, k_2 \in\{0, 1, 2, 3\}$ such that $k_1+k_2\leq 3$.
  We are going to examine the integral of the RHS.
  We observe
  \begin{align*}
    \int_{\bbR}\wit{f}_0(s)\wit{f}_0(u+s)ds
     & \leq \int_{\bbR}s^{\beta-1}(s+u)^{\beta-1}1_{\{0<s<1, 0<s+u<1\}}ds
    + \int_{\bbR}s^{\beta-1}\tilde{f}(s+u)1_{\{0<s<1, s+u\geq 1\}}ds \nonumber  \\
     & \quad + \int_{\bbR}\tilde{f}(s)(s+u)^{\beta-1}1_{\{s\geq 1, 0<s+u<1\}}ds
    + \int_{\bbR}\tilde{f}(s)\tilde{f}(s+u)1_{\{s\geq 1, s+u\geq 1\}}ds.
  \end{align*}
  for all $u\neq 0$.
  Since $\tilde{f}$ is bounded, the second and third integrals are bounded by some constant not depending on $u$. The fourth integral is a convolution of bounded and integrable functions on $\bbR$ so that it is continuous on $\bbR$ as a function of $u\in\bbR$. In particular, it is bounded on $[-r, r]$.
  Thus, there exists some constant $C>0$ depending on neither $u$ nor the parameters such that
  \begin{equation*}
    \int_{\bbR}\wit{f}_0(s)\wit{f}_0(u+s)ds
    \leq \int_{\bbR}s^{\beta-1}(s+u)^{\beta-1}1_{\{0<s<1, 0<s+u<1\}}ds + C, \quad 0<|u|\leq r.
  \end{equation*}
  Here, we have
  \begin{align*}
     & \quad \int_{\bbR}s^{\beta-1}(s+u)^{\beta-1}1_{\{0<s<1, 0<s+u<1\}}ds \\
     & = \left\{
    \begin{array}{c}\displaystyle
      u^{2\beta-1}\int_0^{\frac{1}{u}-1} t^{\beta-1}(t+1)^{\beta-1}dt \quad (0<u<1) \\
      \displaystyle
      |u|^{2\beta-1}\int_0^{\frac{1}{|u|}-1} (t+1)^{\beta-1}t^{\beta-1}dt \quad (-1<u<0)
    \end{array}
    \right.                                                                \\
     & \leq |u|^{2\beta-1}\int_0^{\infty} t^{\beta-1}(t+1)^{\beta-1}dt
    \lesssim |u|^{2\beta-1}, \quad 0<|u|<1.
  \end{align*}
  by the change of variable $s=ut$ [resp. $s=-u(t+1)$] for $0<u<1$ [resp. $-1<u<0$].
  Thus, we obtain
  \begin{equation}\label{eq_ns_proof_assumption8}
    \int_{\bbR}\wit{f}_0(s)\wit{f}_0(u+s)ds
    \lesssim \Bigl(|u|^{-(1-2\beta)} 1_{\{0<|u|<1\}}(u) + 1\Bigr), \quad 0<|u|\leq r.
  \end{equation}
  Therefore, by (\ref{eq_ns_proof_assumption3}) and (\ref{eq_ns_proof_assumption8}), the dominated convergence theorem implies that
  \[
    p(u; \tau_1, \tau_2) := \int_{\bbR}f_1(s; \tau_1)f_2(u+s; \tau_2)ds
  \]
  is in $C(\ol{\calt_1\times\calt_2})\cap C^3(\calt_1\times\calt_2)$ for all fixed $u\in[-r, r]\setminus \{0\}$.
  Consequently, the cross-correlation function
  \[g(u; \theta) = 1 + 1_{\{\bbR\setminus \{0\}\}}(u) \times a\times p(u; \tau_1, \tau_2), \quad \theta = (a, \tau_1, \tau_2)\]
  given in (\ref{eq_ns_cross_corr}) satisfies [RE](ii).
  Moreover, we can interchange the differentiations and the integrals as
  \begin{equation}\label{eq_ns_re_interchange_diff_int}
    \partial_{\tau_1}^{k_1}\partial_{\tau_2}^{k_2}
    p(u; \tau_1, \tau_2) = \int_{\bbR}\partial_{\tau_1}^{k_1}f_1(s; \tau_1)\partial_{\tau_2}^{k_2}f_2(u+s; \tau_2)ds, \quad \tau_i\in\calt_i, i=1, 2
  \end{equation}
  for $u\in[-r, r]\setminus \{0\}$ and $k_1, k_2 \in\{0, 1, 2, 3\}$ such that $k_1+k_2\leq 3$.
  Considering (\ref{eq_ns_re_interchange_diff_int}) with (\ref{eq_ns_proof_assumption3}) and (\ref{eq_ns_proof_assumption8}), we can take $f_{B, 1}$ in [RE](iii) as
  \[
    f_{B,1}(u) = C'\Bigl(|u|^{-(1-2\beta)} + 1\Bigr)
  \]
  for some large constant $C'>0$.

  To find $f_{B, 2}$ in [RE](iv), we need more sophisticated evaluations.
  Suppose a constant $\epsilon_0>0$ satisfies $-(1-2\beta+\epsilon_0)>-1$.
  Because $g(u; \theta^*)$ is bounded from above by $f_{B,1}(u)=C'(|u|^{-(1-2\beta)} 1_{\{0<|u|<1\}}(u) + 1)$,
  it is sufficient to show that
  \begin{equation}\label{eq_obj_fB2}
    |\pt^k (\log g(u; \theta))| \lesssim |u|^{-\epsilon_0} 1_{\{0<|u|\leq 2^{-1}\}} + 1, \quad 0<|u|\leq r
  \end{equation} for $k=0, 1, 2, 3$ to obtain [RE](iv).
  For $k=0$, (\ref{eq_obj_fB2}) is obvious because $1\leq g(u; \theta)\lesssim f_{B,1}(u)$. We will deal with $k=1, 2, 3$.
  Now each component of
  \(
  \pt^k (\log g(u; \theta)) = \pt^k (\log (1 + ap(u; \tau_1, \tau_2)))
  \)
  is in the linear span of
  \[
    \Biggl\{
    \frac{
    a^{k_0} \prod_{l=1}^{L} \partial_{\tau_1}^{k_{l_1}}\partial_{\tau_2}^{k_{l_2}}
    p(u; \tau_1, \tau_2)
    }{
    (1 + ap(u; \tau_1, \tau_2))^{K}
    }
    ; 0\leq k_{l_1}+k_{l_2} \leq 3, k_{l_1}, k_{l_2}\in \bbZ_{\geq 0},
    l=1, \ldots, L,
    \ k_0, L\leq K\leq 3,
    \ k_0, K, L\in\bbZ_{\geq 0}
    \Biggr\}.
  \]
  in the space of functions of $u$.
  Also, as the way obtaining (\ref{eq_ns_proof_assumption8}), we have
  \[
    |\partial_{\tau_1}^{k_{l_1}}\partial_{\tau_2}^{k_{l_2}}p(u; \tau_1, \tau_2)|
    \lesssim
    1 + 1_{\{0<|u|< 1\}}
    \sum_{0\leq k_1+k_2\leq 3}
    \int_0^1 s^{h_{2, 1}(\tau_1)-1}|\log s|^{k_1}
    (s+u)^{h_{2, 2}(\tau_2)-1} |\log(u+s)|^{k_2} 1_{\{0<u+s<1\}}ds
  \]
  by the expression (\ref{eq_ns_proof_assumption3}) and (\ref{eq_ns_re_interchange_diff_int}).
  Therefore, to obtain (\ref{eq_obj_fB2}), it is sufficient to show that
  \begin{equation}\label{eq_obj_fB2_2}
    \frac{I(u; \tau_1, \tau_2) }{1 + ap(u; \tau_1, \tau_2)}
    \lesssim
    |u|^{-\frac{1}{3}\epsilon_0} 1_{\{0<|u|\leq 2^{-1}\}} + 1,
    \quad 0<|u|<1
  \end{equation}
  for $0\leq k_1+k_2 \leq 3, k_1, k_2\in \bbZ_{\geq 0}$,
  where
  \[
    I(u; \tau_1, \tau_2)
    := \int_0^1 s^{h_{2, 1}(\tau_1)-1}|\log s|^{k_1}
    (s+u)^{h_{2, 2}(\tau_2)-1} |\log(u+s)|^{k_2} 1_{\{0<u+s<1\}}ds.
  \]
  Thus, we will evaluate $I(u; \tau_1, \tau_2)$ from above and
  $1 + ap(u; \tau_1, \tau_2)$ from below.
  In the following, we only consider the case $0<u<1$ because the other case $-1<u<0$ goes similarly.

  First, we will consider the case $h_{2, 1}(\tau_1) + h_{2, 2}(\tau_2) \leq 1$.
  Let $0 < \epsilon < \frac{1}{9}\min\{\inf_{\tau_1}h_{2, 1}(\tau_1), \inf_{\tau_2}h_{2, 2}(\tau_2), \epsilon_0\}$ and fix $\theta = (a, \tau_1, \tau_2)$.
  We observe
  \begin{align}\label{al_log_derivatives_nbnsp_ineq_singular}
     & \quad I(u; \tau_1, \tau_2)         \nonumber                                                                                                              \\
     & \leq \int_0^1 s^{h_{2, 1}(\tau_1)-1-\epsilon}
    (s+u)^{h_{2, 2}(\tau_2)-1-\epsilon} 1_{\{0<u+s<1\}}ds              \nonumber                                                                                 \\
     & = u^{h_{2, 1}(\tau_1)+h_{2, 2}(\tau_2)-1-2\epsilon}
    \int_0^{\frac{1}{u}-1}
    t^{h_{2, 1}(\tau_1)-1-\epsilon}
    (t+1)^{h_{2, 2}(\tau_2)-1-\epsilon}dt
    \quad (\because \text{change of the variable as } s=ut)        \nonumber                                                                                     \\
     & \leq u^{h_{2, 1}(\tau_1)+h_{2, 2}(\tau_2)-1-2\epsilon}
    \Bigl(\int_0^1 + \int_1^{(\frac{1}{u}-1)\vee 1}\Bigr)
    t^{h_{2, 1}(\tau_1)-1-\epsilon}
    (t+1)^{h_{2, 2}(\tau_2)-1-\epsilon}dt           \nonumber                                                                                                    \\
     & \leq u^{h_{2, 1}(\tau_1)+h_{2, 2}(\tau_2)-1-2\epsilon}
    \Bigl\{\int_0^1 t^{h_{2, 1}(\tau_1)-1-\epsilon}dt
    + \int_1^{(\frac{1}{u}-1)\vee 1} t^{h_{2, 1}(\tau_1) + h_{2, 2}(\tau_2)-2-2\epsilon} dt \Bigr\}  \quad (\because h_{22}(\tau_2) - 1 - \epsilon < 0)\nonumber \\
     & \leq u^{h_{2, 1}(\tau_1)+h_{2, 2}(\tau_2)-1-2\epsilon}
    \Bigl\{\int_0^1 t^{h_{2, 1}(\tau_1)-1-\epsilon}dt
    + \int_1^{(\frac{1}{u}-1)\vee 1} t^{-1} dt \Bigr\}  \quad (\because h_{2, 1}(\tau_1) + h_{2, 2}(\tau_2) - 2 - 2\epsilon < -1)    \nonumber                   \\
     & \leq u^{h_{2, 1}(\tau_1)+h_{2, 2}(\tau_2)-1-2\epsilon}
    \Bigl\{
    (h_{2, 1}(\tau_1) - \epsilon)^{-1} + |\log((u^{-1}-1)\vee 1)|
    \Bigr\}       \nonumber                                                                                                                                      \\
     & \lesssim u^{h_{2, 1}(\tau_1)+h_{2, 2}(\tau_2)-1-3\epsilon}
    \quad 0<u<1.
    \quad (\because h_{2, 1}(\tau_1) - \epsilon > 2^{-1} \inf_{\tau_1}h_{2, 1}(\tau_1)>0)
  \end{align}
  Also, we have
  \begin{align*}
    p(u; \tau_1, \tau_2)
     & \gtrsim \int_0^1 s^{h_{2, 1}(\tau_1) - 1} (s+u)^{h_{2, 2}(\tau_2) - 1} 1_{\{0<u+s<1\}} ds \\
     & = u^{h_{2, 1}(\tau_1)+h_{2, 2}(\tau_2)-1}
    \int_0^{\frac{1}{u}-1} t^{h_{2, 1}(\tau_1)-1}(t+1)^{h_{2, 2}(\tau_2)-1}dt                    \\
     & \geq u^{h_{2, 1}(\tau_1)+h_{2, 2}(\tau_2)-1}
    \int_0^1 t^{h_{2, 1}(\tau_1)-1}(t+1)^{h_{2, 2}(\tau_2)-1}dt
    \quad (\because 0<u<2^{-1})                                                                  \\
     & \geq u^{h_{2, 1}(\tau_1)+h_{2, 2}(\tau_2)-1}
    \int_0^1 (t+1)^{h_{2, 1}(\tau_1)+h_{2, 2}(\tau_2)-2}dt
    \quad (\because h_{2, 1}(\tau_1)-1 < 0)                                                      \\
     & \geq u^{h_{2, 1}(\tau_1)+h_{2, 2}(\tau_2)-1}
    \int_0^1 (t+1)^{-2}dt                                                                        \\
     & \gtrsim u^{h_{2, 1}(\tau_1)+h_{2, 2}(\tau_2)-1}, \quad 0<u<2^{-1}.
  \end{align*}
  Therefore, we have $I(u; \tau_1, \tau_2)\lesssim u^{h_{2, 1}(\tau_1)+h_{2, 2}(\tau_2)-1-3^{-1}\epsilon}$ for $0<u<1$ and $1 + ap(u; \tau_1, \tau_2)\gtrsim u^{h_{2, 1}(\tau_1)+h_{2, 2}(\tau_2)-1}$ for $0<u<2^{-1}$, so that we have
  \begin{equation}\label{eq_ns_I_case1}
    \frac{I(u; \tau_1, \tau_2) }{1 + ap(u; \tau_1, \tau_2)}
    \lesssim
    |u|^{-\frac{1}{3}\epsilon_0} 1_{\{0<|u|\leq 2^{-1}\}} + 1,
    \quad 0<|u|<1.
  \end{equation}
  Especially we have (\ref{eq_obj_fB2_2}).

  Next, we will consider the other case $h_{2, 1}(\tau_1) + h_{2,2}(\tau_2) > 1$.
  For $0<u<1$, we observe
  \begin{align}\label{al_log_derivatives_nbnsp_ineq_nonsingular1}
     & \quad I(u; \tau_1, \tau_2) \nonumber                                                       \\
     & = u^{h_{2, 1}(\tau_1)+h_{2, 2}(\tau_2)-1}
    \int_0^{\frac{1}{u}-1}
    t^{h_{2, 1}(\tau_1)-1}|\log (ut)|^{k_1}
    (t+1)^{h_{2, 2}(\tau_2)-1}|\log (u(t+1))|^{k_2}dt          \nonumber                          \\
     & \leq
    \int_0^1 t^{h_{2, 1}(\tau_1)-1}|\log (u) +\log(t)|^{k_1}|\log(u)+\log(t+1)|^{k_2}dt \nonumber \\
     & \qquad + u^{h_{2, 1}(\tau_1)+h_{2, 2}(\tau_2)-1} \int_1^{(\frac{1}{u}-1)\vee 1}
    t^{h_{2, 1}(\tau_1)-1}|\log (u) +\log(t)|^{k_1}
    (t+1)^{h_{2, 2}(\tau_2)-1}|\log(u)+\log(t+1)|^{k_2}dt          \nonumber                      \\
     & \lesssim
    (\log(u^{-1}+2))^3
    \int_0^1 t^{\inf_{\tau_1}h_{2, 1}(\tau_1)-1}(\log(t^{-1}+2))^{k_1} dt          \nonumber      \\
     & \qquad + u^{h_{2, 1}(\tau_1)+h_{2, 2}(\tau_2)-1} (\log(u^{-1}+2))^3
    \int_1^{(\frac{1}{u}-1)\vee 1}
    t^{h_{2, 1}(\tau_1)-1}(t+1)^{h_{2, 2}(\tau_2)-1}dt                         \nonumber          \\
     & \qquad\quad \Bigl(
    \because |\log(u)|, |\log((u^{-1}-1)\vee 1)| \leq \log(u^{-1}+2)\text{ and } 1\leq \log(u^{-1}+2) \text{ , for $0<u<1$}
    \Bigr)     \nonumber                                                                          \\
     & \lesssim (\log(u^{-1}+2))^3
    \Biggl(
    1 + u^{h_{2, 1}(\tau_1)+h_{2, 2}(\tau_2)-1}
    \int_1^{(\frac{1}{u}-1)\vee 1}
    t^{h_{2, 1}(\tau_1)-1}(t+1)^{h_{2, 2}(\tau_2)-1}dt
    \Biggr).
  \end{align}
  Here we have
  \begin{align}\label{al_log_derivatives_nbnsp_ineq_nonsingular2}
     & \quad u^{h_{2, 1}(\tau_1)+h_{2, 2}(\tau_2)-1}
    \int_1^{(\frac{1}{u}-1)\vee 1}
    t^{h_{2, 1}(\tau_1)-1}(t+1)^{h_{2, 2}(\tau_2)-1}dt   \nonumber       \\
     & \lesssim u^{h_{2, 1}(\tau_1)+h_{2, 2}(\tau_2)-1}
    \int_1^{(\frac{1}{u}-1)\vee 1}
    t^{h_{2, 1}(\tau_1)+h_{2, 2}(\tau_2)-2}dt
    \quad (\because t\geq 1, \alpha>-1 \Rightarrow (t+1)^{\alpha}\leq (2^{\alpha\vee 0})t^{\alpha})\nonumber
    \\
     & \leq u^{h_{2, 1}(\tau_1)+h_{2, 2}(\tau_2)-1}
    \int_1^{(\frac{1}{u}-1)\vee 1}
    t^{h_{2, 1}(\tau_1)+h_{2, 2}(\tau_2)-2+6^{-1}\epsilon_0}dt \nonumber \\
     & \leq u^{-6^{-1}\epsilon_0}
    \frac{1}{h_{2, 1}(\tau_1)+h_{2, 2}(\tau_2)-1+6^{-1}\epsilon_0}
    \leq 6\epsilon_0^{-1}u^{-6^{-1}\epsilon_0}
    \quad (\because  h_{2, 1}(\tau_1) + h_{2, 2}(\tau_2) > 1) \nonumber  \\
     & \lesssim u^{-6^{-1}\epsilon_0}
  \end{align}
  Therefore, we obtain $I(u; \tau_1, \tau_2)\lesssim (\log(u^{-1}+2))^3 u^{-6^{-1}\epsilon_0}\lesssim u^{-3^{-1}\epsilon_0}$ for $0<u<1$. Together with the fact that we obviously have $1+ap(u; \tau_1, \tau_2)\geq 1$,
  we derive (\ref{eq_obj_fB2_2}) as well.
  Consequently, we can take $f_{B, 2}(u) = C''(|u|^{-\epsilon_0} + 1)$ for some large constant $C''>0$ and we have [RE](iv).
\end{proof}
\halflineskip

We also have parameter-dependent bounds of log-derivatives of the cross-correlation function.
\begin{lemma}\label{lem_log_ccf_derivative_estimate_parameter_wise}
  Let $\theta = (a, \tau_1, \tau_2)\in\Theta \text{[resp. $\in \ol{\Theta}$]}$ and $k=1, 2, 3$ [resp. $k=0$].
  Then, for any small $\epsilon'>0$, we have
  \begin{equation*}
    |\pt^k (\log g(u; \theta))| \leq C(\theta)(|u|^{-\epsilon'} 1_{\{0<|u|\leq 2^{-1}\}} + 1), \quad 0 < |u|\leq r
  \end{equation*}
  for some constant $C(\theta)>0$ not depending on $u$.
  Especially,
  we have $\pt^k (\log g(\cdot; \theta))\in \bigcap_{p\geq 1}L^p([-r, r])$.
\end{lemma}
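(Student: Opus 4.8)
The plan is to re-run, at a \emph{fixed} $\theta=(a,\tau_1,\tau_2)$, the estimates already carried out in the proof of Lemma~\ref{lem_NS_RE}. The point is that with $\tau_1,\tau_2$ fixed the kernel exponents are fixed positive numbers, so the auxiliary exponent $\epsilon$ that was introduced there to absorb the factors $|\log u|$ via $|\log u|\le u^{-\epsilon}$ may now be taken \emph{arbitrarily small} (only $h\text{-exponent}-\epsilon>0$ is needed), at the price of constants depending on $\theta$. Throughout write $p(u;\tau_1,\tau_2)=\int_{\bbR}f_1(s;\tau_1)f_2(u+s;\tau_2)\,ds$, so that $g(u;\theta)=1+ap(u;\tau_1,\tau_2)\ge 1$, let $I(u;\tau_1,\tau_2)$ be the singular integral from that proof, and let $\sigma$ denote the sum of the two kernel exponents evaluated at $\tau_1,\tau_2$. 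We use $\lesssim_\theta$ for inequalities with an implied constant depending on $\theta$ (and on $\epsilon'$) but not on $u$.

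First I would dispose of $k=0$, $\theta\in\ol\Theta$. Since $0\le\log g(u;\theta)=\log(1+ap(u;\tau_1,\tau_2))$ and, by (\ref{eq_ns_proof_assumption3})--(\ref{eq_ns_proof_assumption8}), $p(u;\tau_1,\tau_2)\le C(|u|^{-(1-2\beta)}1_{\{0<|u|<1\}}+1)$ uniformly in $\ol{\calt_1}\times\ol{\calt_2}$, we get $\log g(u;\theta)\le\log(1+C'(|u|^{-(1-2\beta)}+1))\lesssim 1+\log(|u|^{-1}+2)$ for $0<|u|\le r$, and $1+\log(|u|^{-1}+2)\le C(\epsilon')(|u|^{-\epsilon'}1_{\{0<|u|\le 2^{-1}\}}+1)$ for every $\epsilon'>0$.

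For $k=1,2,3$ and $\theta\in\Theta$, I would recall from the proof of Lemma~\ref{lem_NS_RE} that each entry of $\pt^k\log g(u;\theta)$ is a finite linear combination of
\[
\frac{a^{k_0}\prod_{l=1}^{L}\partial_{\tau_1}^{k_{l_1}}\partial_{\tau_2}^{k_{l_2}}p(u;\tau_1,\tau_2)}{(1+ap(u;\tau_1,\tau_2))^{K}},\qquad 0\le k_{l_1}+k_{l_2}\le 3,\quad k_0,L\le K\le 3 .
\]
Using (\ref{eq_ns_kernel_diff}), (\ref{eq_ns_proof_assumption3}) and (\ref{eq_ns_re_interchange_diff_int}), I would show $|\partial_{\tau_1}^{k_{l_1}}\partial_{\tau_2}^{k_{l_2}}p(u;\tau_1,\tau_2)|\lesssim_\theta I(u;\tau_1,\tau_2)+1$ (splitting the $s$-integral over the four regions $s\lessgtr 1$, $u+s\lessgtr 1$, the non-$(0,1)\times(0,1)$ parts producing only finite $\theta$-dependent constants). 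Then, rerunning the chain (\ref{al_log_derivatives_nbnsp_ineq_singular}) with $\epsilon$ free gives $I(u;\tau_1,\tau_2)\lesssim_\theta|u|^{\sigma-1-3\epsilon}$ when $\sigma\le1$, while rerunning (\ref{al_log_derivatives_nbnsp_ineq_nonsingular1})--(\ref{al_log_derivatives_nbnsp_ineq_nonsingular2}) gives $I(u;\tau_1,\tau_2)\lesssim_\theta(\log(|u|^{-1}+2))^{4}$ when $\sigma>1$ (in the latter case one only needs $\int_1^{1/|u|}t^{\sigma-2}\,dt\lesssim_\theta|u|^{-(\sigma-1)}$). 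On the other side, the proof of Lemma~\ref{lem_NS_RE} supplies $1+ap(u;\tau_1,\tau_2)\ge 1$ always and $1+ap(u;\tau_1,\tau_2)\gtrsim_\theta|u|^{\sigma-1}$ on $0<|u|<2^{-1}$ when $\sigma\le1$. Combining: when $\sigma>1$ the displayed ratio is $\lesssim_\theta(\log(|u|^{-1}+2))^{4K}\lesssim_\theta|u|^{-\epsilon'}$; when $\sigma\le1$, bounding $L$ of the $K$ denominator factors below by $|u|^{\sigma-1}$ (note $|u|^{\sigma-1}\ge1$ here) and the remaining $K-L$ by $1$, and using $I+1\lesssim_\theta|u|^{\sigma-1-3\epsilon}$ for small $|u|$, the ratio is $\lesssim_\theta|u|^{(L-K)(\sigma-1)-3L\epsilon}\le|u|^{-9\epsilon}$ since $L\le K$ and $\sigma-1\le0$. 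Taking $\epsilon=\epsilon'/9$ yields $|\pt^k\log g(u;\theta)|\le C(\theta)(|u|^{-\epsilon'}1_{\{0<|u|\le 2^{-1}\}}+1)$ on $0<|u|\le r$, for every $\epsilon'>0$. Finally, for the $L^p$ statement: given $p\ge1$, apply this bound with $\epsilon'=1/(2p)$; then $|\pt^k\log g(\cdot;\theta)|^p$ is dominated on $[-r,r]$ by a multiple of $|u|^{-1/2}1_{\{0<|u|\le 2^{-1}\}}+1$, which is integrable, so $\pt^k\log g(\cdot;\theta)\in L^p([-r,r])$, and since $p$ is arbitrary, $\pt^k\log g(\cdot;\theta)\in\bigcap_{p\ge1}L^p([-r,r])$.

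The only delicate point I anticipate is the bookkeeping of powers of $|u|$ in the singular regime $\sigma\le1$: one must check that the $L$ singular numerator factors are exactly compensated by $L$ of the $K$ denominator factors, so that only the $O(\epsilon)$ loss coming from the logarithmic terms survives in the exponent. Everything else is a routine rerun of the corresponding estimates in the proof of Lemma~\ref{lem_NS_RE}, with $\theta$ now fixed so that $\epsilon$ may be sent to $0$.
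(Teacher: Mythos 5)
Your proposal is correct and follows essentially the same route as the paper: reuse the linear-span decomposition of $\pt^k\log g$ and the two-case estimates ($h_{2,1}(\tau_1)+h_{2,2}(\tau_2)\lessgtr 1$) for $I(u;\tau_1,\tau_2)$ and the lower bound on $1+ap$ from the proof of Lemma \ref{lem_NS_RE}, now with $\theta$ fixed so that the auxiliary exponent $\epsilon$ can be taken arbitrarily small at the cost of $\theta$-dependent constants. Your write-up just spells out details the paper leaves implicit (the $L$-versus-$K$ factor bookkeeping and the $L^p$ deduction); the only blemish is the intermediate exponent $(L-K)(\sigma-1)-3L\epsilon$, which does not match your stated bounding of the remaining $K-L$ denominator factors by $1$, though the final inequality $\le|u|^{-9\epsilon}$ holds either way.
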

\begin{proof}
  We will use the same notations as in the proof of Lemma \ref{lem_NS_RE}.
  For $k=0$, the estimate is obvious because $1\leq g(u; \theta)\lesssim f_{B,1}(u)$.
  Let $k=1, 2, 3$.
  For the case $h_{12}(\tau_1) + h_{22}(\tau_2) \leq 1$, we obtain the result from (\ref{eq_ns_I_case1}).

  For the other case $h_{12}(\tau_1) + h_{22}(\tau_2) > 1$,
  by (\ref{al_log_derivatives_nbnsp_ineq_nonsingular1}) and up to the first inequality in (\ref{al_log_derivatives_nbnsp_ineq_nonsingular2}), we have
  \begin{align*}
    I(u; \tau_1, \tau_2)
     & \lesssim
    (\log(u^{-1}+2))^3 u^{h_{2, 1}(\tau_1)+h_{2, 2}(\tau_2)-1}
    \int_1^{(\frac{1}{u}-1)\vee 1}
    t^{h_{2, 1}(\tau_1)+h_{2, 2}(\tau_2)-2}dt                                                      \\
     & \lesssim (\log(u^{-1}+2))^3 \frac{1}{h_{2, 1}(\tau_1) + h_{2, 2}(\tau_2) - 1}, \quad 0<u<1,
  \end{align*}
  hence we obtain the result. For $-1<u<0$, we have similar estimates.
\end{proof}
\halflineskip

Next, we will deal with the condition [MI] under [NS].
The $\alpha$-mixing coefficient of the Neyman-Scott process is evaluated by the tail probabilities of the dispersal kernels.
This lemma is proven by similar way as the proof of Lemma 1 in Prokešová \& Jensen~\cite{Palm2013}.
\begin{lemma}\label{lem_mixing_NS}
  For all $c_1\geq 0$ and $m\geq 2r+2$,
  \[
    \alpha_{c_1, \infty}^{N^S}(m; r)
    \leq
    8\lambda c_1(m+1+2r) \sum_{i=1}^2\sigma_i \int_{|z|\geq \frac{m}{2}-2r} dz f_i(z; \tau_i).
  \]
  Especially, together with the condition (\ref{eq_ns_cond_2}) in [NS](iii),
  we have
  \[
    \sum_{m=1}^{\infty}\alpha_{2, \infty}^{N^S}(m; r)^{\frac{\delta}{2+\delta}} < \infty,
  \]
  where $\delta>0$ is the one appearing in [NS](iii).
\end{lemma}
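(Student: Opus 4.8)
The plan is to exploit the Poisson–cluster structure of $N^S$: dependence between the restrictions $N^S\cap E_1$ and $N^S\cap E_2$ can only be produced by parent points whose offspring reach \emph{both} windows, and since the parents form a Poisson process, a parent far from a window generates, in expectation, very few offspring inside it — a quantity controlled exactly by the tails $\int_{|z|\ge R}f_i(z;\tau_i)\,dz$. Concretely, fix admissible windows $E_1=\bigcup_{l\in M_1}C(l)\oplus r$ and $E_2=\bigcup_{l\in M_2}C(l)\oplus r$ with $\#M_1\le c_1$, $d(M_1,M_2)\ge m$, $m\ge 2r+2$, and enlarge $E_1$ to a buffer $U=E_1\oplus\rho$ with $\rho$ of order $m/2$ (precisely $\rho=\tfrac m2-2r$; this is where $m\ge 2r+2$ and $d(M_1,M_2)\ge m$ enter, ensuring both that parents outside $U$ lie at distance $\ge\tfrac m2-2r$ from $E_1$ and, via $d(E_1,E_2)\ge m-1-2r$, that $U$ stays at distance $\ge\tfrac m2-2r$ from $E_2$). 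Since $\calc\cap U$ and $\calc\cap U^c$ are independent Poisson processes, the decomposition $N^S=N^S_{U}+N^S_{U^c}$ into the clusters of parents inside and outside $U$ is a decomposition into two independent bivariate point processes.

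Next I would run the standard coupling bound for the $\alpha$-coefficient. For $C\in\sigma(N^S\cap E_1)$ and $D\in\sigma(N^S\cap E_2)$, write $C=\{N^S\cap E_1\in\mathcal A\}$, $D=\{N^S\cap E_2\in\mathcal B\}$, and set $\tilde C=\{N^S_{U}\cap E_1\in\mathcal A\}$, $\tilde D=\{N^S_{U^c}\cap E_2\in\mathcal B\}$, so $\tilde C$ and $\tilde D$ are independent by the previous paragraph. On $\{N^S_{U^c}(E_1)=0\}$ one has $N^S\cap E_1=N^S_{U}\cap E_1$, and symmetrically, so $C\triangle\tilde C\subseteq\bigcup_{i=1,2}\{N^S_{i,U^c}(E_1)\ge1\}$ and $D\triangle\tilde D\subseteq\bigcup_{i=1,2}\{N^S_{i,U}(E_2)\ge1\}$; a double application of the triangle inequality gives $|P(C\cap D)-P(C)P(D)|\le 2P(C\triangle\tilde C)+2P(D\triangle\tilde D)$. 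Each summand is controlled by a first moment: by Markov's inequality and a first-moment computation for the clusters (conditioning on the Poisson parents $\calc$; recall the intensity of $N^S_i$ is $\lambda\sigma_i$),
\[
  P\bigl(N^S_{i,U^c}(E_1)\ge1\bigr)\le\lambda\sigma_i\int_{U^c}\!\!\int_{E_1}f_i(y-c;\tau_i)\,dy\,dc\le\lambda\sigma_i\,\mathrm{Leb}(E_1)\!\int_{|z|\ge\rho}\!f_i(z;\tau_i)\,dz,
\]
and likewise $P(N^S_{i,U}(E_2)\ge1)\le\lambda\sigma_i\,\mathrm{Leb}(U)\int_{|z|\ge d(U,E_2)}f_i(z;\tau_i)\,dz$, in each case using that the integrand forces $|y-c|$ past the relevant separation. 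Bounding $\mathrm{Leb}(E_1)\le c_1(1+2r)$ and $\mathrm{Leb}(U)\le c_1(1+2r+2\rho)\le c_1(m+1+2r)$, replacing both separations by $\tfrac m2-2r$, summing the four terms over $i=1,2$, absorbing the numerical factors into the constant $8$, and taking the supremum over admissible $E_1,E_2$, one obtains the claimed inequality.

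For the last assertion I would simply substitute the decay rate (\ref{eq_ns_cond_2}) into the bound just proved with $c_1=2$: for large $m$, $\alpha_{2,\infty}^{N^S}(m;r)\lesssim m\,(\tfrac m2-2r)^{-\frac{2+\delta}{\delta}(2+\epsilon)}\lesssim m^{\,1-\frac{2+\delta}{\delta}(2+\epsilon)}$, hence $\alpha_{2,\infty}^{N^S}(m;r)^{\delta/(2+\delta)}\lesssim m^{\,\delta/(2+\delta)-(2+\epsilon)}$, and since $\delta/(2+\delta)<1<1+\epsilon$ the exponent is strictly below $-1$, so the series converges. I expect the only genuine difficulty to be the geometric bookkeeping of the first paragraph — pinning down the buffer radius $\rho$ so that both separations dominate $\tfrac m2-2r$ while $\mathrm{Leb}(U)$ stays $O(c_1 m)$ — together with the verification that the symmetric differences $C\triangle\tilde C$ and $D\triangle\tilde D$ are contained in the stated leakage events, which is precisely where the Poisson property of the parents (independence of $N^S_U$ and $N^S_{U^c}$) is used; the remaining steps are routine moment computations and the familiar $\alpha$-mixing triangle inequality, paralleling the univariate argument of Prokešová and Jensen \cite{Palm2013}.
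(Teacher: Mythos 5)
Your proof is correct and follows essentially the same route as the paper's: the same decomposition of the cluster process according to whether parents lie inside or outside a buffer around $E_1$, the same independence-plus-symmetric-difference coupling bound for the $\alpha$-coefficient (which the paper delegates to Lemma 1 of Proke\v{s}ov\'a and Jensen), and the same first-moment/Campbell control of the leakage events. The only cosmetic difference is your buffer radius $\rho=\tfrac m2-2r$ versus the paper's $\tfrac m2$, which makes your claimed separation $d(U,E_2)\ge\tfrac m2-2r$ off by a bounded constant when $r<\tfrac12$; taking $\rho=\tfrac m2$ as in the paper removes even that wrinkle, and the summability conclusion is unaffected either way.
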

\halflineskip
\begin{proof}
      Suppose that $E_1 = \bigcup_{l\in M_1}C(l)\oplus r, E_2 = \bigcup_{l\in M_2}C(l)\oplus r,
            \#M_1\leq c_1,
            d(M_1, M_2)\geq m, M_1, M_2\subset \bbZ$, and
      \[
            N_i^1 = \sum_{c \in \calc\cap E_1\oplus\frac{m}{2}} \sum_{k = 1}^{M_i(c)} \delta_{c + d_i(c, k)}, \quad
            N_i^2 = \sum_{c \in \calc\cap (E_1\oplus\frac{m}{2})^c} \sum_{k = 1}^{M_i(c)} \delta_{c + d_i(c, k)}
      \]
      for $i=1, 2$.
      Let $C_1$, $C_2$ be arbitrary events from $\sigma(\{N_i\cap E_1\}_{i=1}^2)$, $\sigma(\{N_i\cap B\}_{i=1}^2)$.
      By the same argument as in the proof of Lemma 1 in Prokešová \& Jensen~\cite{Palm2013}, we have
      \begin{align*}
            |P(C_1\cap C_2) - P(C_1)P(C_2)|
             & \leq 4(P(\cup_{i=1}^2 \{N_i^1(E_2) \geq 1\}) + P(\cup_{i=1}^2 \{N_i^2(E_1) \geq 1\})) \\
             & \leq 4\sum_{i=1}^2(P(N_i^1(E_2) \geq 1) + P(N_i^2(E_1) \geq 1))                       \\
             & \leq 4\sum_{i=1}^2(E[N_i^1(E_2)]+ E[N_i^2(E_1)]).
      \end{align*}
      We observe that
      \begin{align*}
             & \quad E[N_i^1(E_2)]
            = \lambda \sigma_i \int_{E_1\oplus\frac{m}{2}}dc \int_{E_2} dz f_i(z-c; \tau_i)                          \\
             & \leq \lambda \sigma_i \int_{E_1\oplus\frac{m}{2}}dc \int_{|z|\geq \frac{m}{2}-2r} dz f_i(z; \tau_i)
            \leq \lambda \sigma_i \mathrm{Leb}(E_1\oplus\frac{m}{2}) \int_{|z|\geq \frac{m}{2}-2r} dz f_i(z; \tau_i) \\
             & \leq \lambda \sigma_i c_1(m+1+2r) \int_{|z|\geq \frac{m}{2}-2r} dz f_i(z; \tau_i).
      \end{align*}
      for $i=1, 2$
      because $d(E_1\oplus\frac{m}{2}, E_2)\geq \frac{m}{2} - 2r$.
      Similarly,
      \begin{align*}
             & \quad E[N_i^2(E_1)]
            = \lambda \sigma_i \int_{(E_1\oplus\frac{k}{2})^c}dc \int_{E_1} dz f_i(z-c; \tau_i) \\
             & \leq \lambda \sigma_i \int_{E_1} du \int_{|z|\geq \frac{m}{2}} dz f_i(z; \tau_i)
            \leq \lambda \sigma_i c_1 \int_{|z|\geq \frac{m}{2}} dz f_i(z; \tau_i).
      \end{align*}
      Thus,
      \[
            E[N_i^1(E_2)]+ E[N_i^2(E_1)]
            \leq 2\lambda \sigma_i c_1(m+1+2r) \int_{|z|\geq \frac{m}{2}-2r} dz f_i(z; \tau_i)
      \]
      holds.
      This concludes the proof.
\end{proof}
\halflineskip

\begin{lemma}\label{lem_ns_existence_of_moment}
  Under (\ref{gat_ns_cond_1}), the bivariate Neyman-Scott process $N^S$ has $\lceil 2+\delta \rceil$-th moment and locally finite factorial moment measures up to $(\lceil 2+\delta \rceil, \lceil 2+\delta \rceil)$-th order.
\end{lemma}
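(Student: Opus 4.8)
The plan is to reduce the statement to the finiteness of the joint ordinary moments $E\bigl[N_1^S(A)^{n_1}N_2^S(A)^{n_2}\bigr]$ for every bounded Borel set $A\subset\bbR$ and all $n_1,n_2\le\lceil2+\delta\rceil$. This is enough: taking $n_2=0$ (and symmetrically $n_1=0$) gives the existence of the $\lceil2+\delta\rceil$-th moment, while for a factorial moment measure one has $M_{[n_1,n_2]}^S\bigl(A^{n_1}\times A^{n_2}\bigr)\le E\bigl[N_1^S(A)^{n_1}N_2^S(A)^{n_2}\bigr]$, because the factorial product of a counting variable is dominated by its ordinary power and every bounded subset of $\bbR^{n_1+n_2}$ is contained in a box $A^{n_1}\times A^{n_2}$.

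First I would condition on the parent process $\calc$. Writing $N_i^S(A)=\sum_{c\in\calc}N_{i,c}^S(A)$, given $\calc$ the terms $N_{i,c}^S(A)$ are mutually independent over $c$ (and over $i$), non-negative, and the sum is a.s.\ finite since $E[N_i^S(A)]=\lambda\sigma_i\,\mathrm{Leb}(A)<\infty$ by Campbell's formula. Put $q_A^{(i)}(c)=\int_A f_i(x-c)\,dx$; since $f_i$ is a probability density, $0\le q_A^{(i)}\le1$ and $\int_\bbR q_A^{(i)}(c)\,dc=\mathrm{Leb}(A)$, so $q_A^{(i)}\in L^\infty(\bbR)\cap L^1(\bbR)\subset\bigcap_{p\ge1}L^p(\bbR)$. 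Conditionally on $\calc$ and $M_i(c)$, $N_{i,c}^S(A)$ is a sum of $M_i(c)$ i.i.d.\ Bernoulli$(q_A^{(i)}(c))$ variables, so using $(\sum_{m\le M}B_m)^p\le M^{p-1}\sum_{m\le M}B_m$ for $p\ge1$ and then averaging yields the key estimate
\[
  E\bigl[N_{i,c}^S(A)^p\mid\calc\bigr]\le E[M_i^p]\,q_A^{(i)}(c),\qquad p\ge1 .
\]

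Next I would expand $E\bigl[N_1^S(A)^{n_1}N_2^S(A)^{n_2}\mid\calc\bigr]$ multinomially and group the index tuples by the partition they induce. Every block is monochromatic, of some colour $i\in\{1,2\}$ and size $p\le n_i\le\lceil2+\delta\rceil$; by conditional independence and the key estimate, the contribution of a partition with blocks of colour--size data $(i_1,p_1),\dots,(i_m,p_m)$ is at most $\prod_{l=1}^m\bigl(E[M_{i_l}^{p_l}]\sum_{c\in\calc}q_A^{(i_l)}(c)\bigr)$ once the distinctness constraint is dropped, and here only the moments $E[M_i^p]$ with $p\le\lceil2+\delta\rceil$ occur, which are finite by (\ref{gat_ns_cond_1}). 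Writing $S:=\sum_{c\in\calc}\bigl(q_A^{(1)}(c)+q_A^{(2)}(c)\bigr)$ and summing over the finitely many partitions, one obtains $E\bigl[N_1^S(A)^{n_1}N_2^S(A)^{n_2}\mid\calc\bigr]\lesssim S+S^{\,n_1+n_2}$, the constant depending only on $n_1,n_2$ and on the $M_i$-moments.

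Finally I would take expectation in $\calc$: $S=\int_\bbR\bigl(q_A^{(1)}+q_A^{(2)}\bigr)\,d\calc$ is a linear functional of the Poisson process $\calc$ whose integrand lies in every $L^p(\bbR)$, hence all moments $E[S^p]$ are finite (each equals a finite sum over partitions of $\{1,\dots,p\}$ of products of terms $\lambda\int_\bbR(q_A^{(1)}+q_A^{(2)})^{j}$, a standard p.g.fl.\ computation for the Poisson process). Therefore $E\bigl[N_1^S(A)^{n_1}N_2^S(A)^{n_2}\bigr]\le C\,E\bigl[S+S^{\,n_1+n_2}\bigr]<\infty$, which is what is needed. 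I expect the only delicate point to be the bookkeeping that keeps each block monochromatic of order at most $\lceil2+\delta\rceil$, so that the hypothesis $\|M_i\|_{\lceil2+\delta\rceil}<\infty$ is exactly what is consumed, together with checking that the parent functionals $q_A^{(i)}$ lie in all $L^p$ so that the Poisson moments remain finite despite infinitely many parents contributing offspring to $A$.
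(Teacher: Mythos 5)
Your proposal is correct and follows essentially the same route as the paper: reduce to ordinary joint moments, condition on the parent Poisson process, bound each cluster's conditional moment by a constant (controlled by the $M_i$-moments from (\ref{gat_ns_cond_1})) times $\int_{A-c}f_i$, expand over partitions of coincident parents, and finish with the Poisson moment formula — the paper merely uses Stirling numbers and the binomial factorial moments where you use the elementary inequality for sums of indicators. One small inaccuracy: blocks of the induced partition need not be monochromatic, since the same parent can appear in both the $N_1$- and $N_2$-products, but the conditional independence of $N_{1,c}$ and $N_{2,c}$ given $\calc$ (which the paper exploits by factoring $E[N_1^L N_2^L\mid\calc]$ at the outset) makes such a block's contribution factor into two monochromatic ones, so your bound goes through unchanged.
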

\begin{proof}
  For the second assertion, it is sufficient to show that
  \begin{equation}\label{eq_NS_moment_existence_obj}
    E[(N^S_1(A))^L (N^S_2(A))^L] < \infty
  \end{equation}
  for a bounded set $A\in \calb(\bbR)$ and $L=\lceil 2+\delta \rceil$.
  For a fixed configuration of the parent process $\calc$ and $i=1, 2$,
  we have
  \begin{align*}
    E[(N_{i, c}(A))^L|\calc]
     & = E\Bigl[ \Bigl(\sum_{j=1}^{M_i(c)}\delta_{d(c, j)}(A-c) \Bigr)^L|\calc\Bigr] \\
     & = E\Bigl[
      E\Bigl[
        \Bigl(\sum_{j=1}^{M_i(c)}\delta_{d(c, j)}(A-c)\Bigr)^L
        |\calc, M_i(c)\Bigr]
      |\calc\Bigr], \quad c\in\calc.
  \end{align*}
  Here, $\sum_{j=1}^{M_i(c)}\delta_{d(c, j)}(A-c)\sim \mathop{Binomial}\Bigl(M_i(c), \int_{A-c}f_i(u)du\Bigr)$ when conditioned by $M_i(c)$ and $\calc$.
  Thus, together with the fact that $g_i^{(j)}(1-)$ exists for $j=1, \ldots, L$ thanks to the assumption (\ref{gat_ns_cond_1}), we have
  \begin{align*}
    E[N_{i, c}(A)^L|\calc]
     & = E\Bigl[\sum_{j=1}^L S(L, j)
      M_i(c)(M_i(c)-1)\cdots(M_i(c)-j+1) \Bigl(\int_{A-c}f_i(u)du\Bigr)^j
    |\calc \Bigr]                                                       \\
     & = \sum_{j=1}^L S(L, j)g_i^{(j)}(1-)\Bigl(\int_{A-c}f_i(u)du\Bigr)^j \\
     & \lesssim \int_{A-c}f_i(u)du =: \phi_i(c)
    , \quad c\in\calc
  \end{align*}
  where $S(k, j)$ is the Stirling number of second kind.
  We note that $\int_{\bbR}\phi_i(c)dc = \mathrm{Leb}(A) < \infty$ and $\phi_i$ is bounded by $1$ because $f_i$ is a probability density function.
  Therefore, for $i=1, 2$, we have
  \begin{align*}
    E[(N^S_i(A))^L|\calc]
     & = E\Bigl[\Bigl(
      \sum_{c\in\calc} N_{i, c}(A)
    \Bigr)^L|\calc\Bigr]                                                                                             \\
     & = \sum_{c_1,\ldots, c_L\in\calc}
    E\Bigl[\prod_{l=1}^L N_{i, c_l}(A)|\calc\Bigr]                                                                   \\
     & = \sum_{a=1}^L \sum_{\pi\in\calp_a^L}
    \sum_{c_1, \ldots, c_a\in\calc}
    1_{\{c_v\neq c_w, v, w \in\{1, \ldots, a\}\}}
    E\Bigl[\prod_{l=1}^a N_{i, c_l}(A)^{|\pi^{-1}(l)|}|\calc\Bigr]                                                      \\
     & = \sum_{a=1}^L \sum_{\pi\in\calp_a^L}
    \sum_{c_1, \ldots, c_a\in\calc}
    1_{\{c_v\neq c_w, v, w \in\{1, \ldots, a\}\}}
    \prod_{l=1}^a E\Bigl[N_{i, c_l}(A)^{|\pi^{-1}(l)|}|\calc\Bigr]  \quad (\because \mathrm{conditional\ independence}) \\
     & \lesssim \sum_{a=1}^L \sum_{\pi\in\calp_a^L}
    \sum_{c_1, \ldots, c_a\in\calc}
    1_{\{c_v\neq c_w, v, w \in\{1, \ldots, a\}\}}
    \prod_{l=1}^a \phi_i(c_l)                                                                                        \\
  \end{align*}
  Finally, we obtain
  \begin{align}\label{al_existence_cross_moment1}
     & \quad E[(N^S_1(A))^L (N^S_2(A))^L]      \nonumber                                                             \\
     & = E[E[(N^S_1(A))^L (N^S_2(A))^L|\calc]]    \nonumber                                                          \\
     & = E[E[(N^S_1(A))^L|\calc] E[(N^S_2(A))^L|\calc]] \quad (\because \mathrm{conditional\ independence})\nonumber \\
     & = E[E[(N^S_1(A))^L|\calc] E[(N^S_2(A))^L|\calc]] \nonumber                                                    \\
     & \lesssim E\Bigl[
      \prod_{i=1}^2
      \sum_{a_i=1}^L \sum_{\pi_i\in\calp_a^L}
      \sum_{c_{i, 1}, \ldots, c_{i, a_i}\in\calc}
      1_{\{c_v\neq c_w, v, w \in\{1, \ldots, a_i\}\}}
      \prod_{l=1}^{a_i} \phi_i(c_l)
      \Bigr].
  \end{align}
  Because $\phi_i$ is bounded by $1$ and integrable on $\bbR$, it is sufficient to show that
  \begin{equation}\label{eq_existence_cross_moment1}
    E\Bigl[
      \sum_{c_{1}, \ldots, c_{b}\in\calc}
      1_{\{c_v\neq c_w, v, w \in\{1, \ldots, b\}\}}
      \prod_{l=1}^{b} \phi_{i_l}(c_l)
      \Bigr] < \infty
  \end{equation}
  for $b\in\bbZ_{\geq 1}$ and $i_l\in\{1, 2\}, l=1, \ldots, b$
  in order to show the finiteness of the rightmost side of (\ref{al_existence_cross_moment1}).
  However, the $b$-th order factorial moment measure of the stationary Poisson process on $\bbR$ is product the $b$-fold product measure of the Lebesgue measure with itself up to constant (see p.72 of Daley \& Vere-Jones~\cite{DaleyandVereJones2}).
  Considering the fact that $\phi_i$ is bounded by $1$ and integrable on $\bbR$, we derive (\ref{eq_existence_cross_moment1}).
  Consequently, we have the second assertion.

  The first assertion can be shown in the similar way.
\end{proof}
\halflineskip

\begin{lemma}\label{lem_singular_integral}
  Suppose that $M\in\bbZ_{\geq 1}$, $f\in L^q(\bbR)$ for some $q>1$,
  $h\in \bigcap_{p\geq1}L^p(\bbR)$, and $h$ has a compact support. Then,
  \begin{equation*}
    \sup_{x_1, \ldots, x_M\in\bbR} \int_{\bbR}
    \Bigl|f(y)\prod_{m=1}^M h(x_m + y)\Bigr|dy < \infty.
  \end{equation*}
\end{lemma}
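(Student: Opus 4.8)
The plan is to reduce the whole estimate to a single application of Hölder's inequality, exploiting that $h$ belongs to every $L^p$ space. Fix $x_1,\ldots,x_M\in\bbR$ and let $q'$ be the conjugate exponent of $q$, so that $\frac1q+\frac1{q'}=1$; since $q>1$ we have $q'<\infty$. First I would peel off the factor $f$: by Hölder's inequality,
\[
  \int_{\bbR}\Bigl|f(y)\prod_{m=1}^M h(x_m+y)\Bigr|\,dy
  \leq \|f\|_q\,\Bigl\|\prod_{m=1}^M |h(x_m+\cdot)|\Bigr\|_{q'}.
\]

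Next I would control the product of the translated copies of $h$ by the generalized Hölder inequality with $M$ equal exponents. Expanding the $L^{q'}$-norm and using translation invariance of Lebesgue measure,
\[
  \Bigl\|\prod_{m=1}^M |h(x_m+\cdot)|\Bigr\|_{q'}^{q'}
  = \int_{\bbR}\prod_{m=1}^M |h(x_m+y)|^{q'}\,dy
  \leq \prod_{m=1}^M\Bigl(\int_{\bbR}|h(x_m+y)|^{q'M}\,dy\Bigr)^{1/M}
  = \|h\|_{q'M}^{\,q'M},
\]
so that $\bigl\|\prod_{m}|h(x_m+\cdot)|\bigr\|_{q'}\leq\|h\|_{q'M}^{\,M}$. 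Combining with the previous display gives
\[
  \int_{\bbR}\Bigl|f(y)\prod_{m=1}^M h(x_m+y)\Bigr|\,dy\leq \|f\|_q\,\|h\|_{q'M}^{\,M},
\]
and the right-hand side is finite — because $f\in L^q(\bbR)$ and, since $q'M\in[1,\infty)$ and $h\in\bigcap_{p\geq1}L^p(\bbR)$, $\|h\|_{q'M}<\infty$ — and does not depend on $x_1,\ldots,x_M$. Taking the supremum over $x_1,\ldots,x_M$ finishes the proof.

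There is essentially no obstacle here: the compact support of $h$ plays no role in this argument, only its membership in the single space $L^{q'M}$. The sole points deserving a remark are that $q'<\infty$ (guaranteed by $q>1$) and that the exponent $q'M$ is at least $1$, so that $h\in L^{q'M}$ is meaningful; both are immediate, and the remainder is the routine bookkeeping of the generalized Hölder step.
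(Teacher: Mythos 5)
Your proof is correct and is essentially the argument the paper intends — its own proof is the single line ``By Hölder's inequality,'' and your Hölder/generalized-Hölder computation with exponents $q$, $q'$ and $M$ copies of $q'M$ is the natural way to carry that out, with all exponents and the uniformity in $x_1,\ldots,x_M$ handled properly. Your side remark that the compact support of $h$ is not needed here (only $h\in L^{q'M}$) is also accurate.
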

\begin{proof}
  By Hölder's inequality, we obtain the result.
\end{proof}
\halflineskip

\begin{lemma}\label{lem_high_moment_NS}
  Suppose $h\in \bigcap_{p\geq1}L^p(\bbR)$, $h\geq 0$, and $h$ has a compact support.
  Then,
  for all $L\in\bbZ$ such that $1\leq L \leq \lceil 2+\delta \rceil$, we have
  \begin{equation*}
    E\Bigl[\Bigl(
      \sum_{\substack{x\in N_1^S, y\in N_2^S  \\ x\in (0, 1]}}
      h(y-x)
      \Bigr)^L\Bigr] < \infty.
  \end{equation*}
\end{lemma}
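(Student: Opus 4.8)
The plan is to expand $E[S^L]$, where $S=\sum_{x\in N_1^S,\,y\in N_2^S,\,x\in(0,1]}h(y-x)=\int_{(0,1]\times\bbR}h(y-x)\,N_1^S(dx)N_2^S(dy)$, into a finite linear combination of explicit integrals against products of dispersal densities and the parent intensity -- exactly along the lines of the proof of Lemma \ref{lem_ns_existence_of_moment} -- and then to estimate each such integral by a H\"older-type argument based on Remark \ref{rem_mom_NS} and Lemma \ref{lem_singular_integral}.

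First I would write $S^L$ as an $L$-fold integral over pairs $(x_k,y_k)$, $k=1,\dots,L$, insert the cluster decomposition $N_i^S=\sum_{c\in\calc}N_{i,c}^S$, group the $2L$ parent indices by coincidence, and condition on $\calc$. By conditional independence of distinct clusters, and since $N_{1,c}^S$ and $N_{2,c}^S$ are conditionally independent, the conditional expectation factorizes over the distinct parents involved and, for each parent, over its two components. For a single parent $p$ and a set $A$ of $x$-slots, $E\bigl[\prod_{k\in A}N_{1,p}^S(dx_k)\mid\calc\bigr]$ expands, via the Stirling-number identity used in the proof of Lemma \ref{lem_ns_existence_of_moment} (legitimate because $g_i^{(j)}(1-)$ exists for $j\le L\le\lceil 2+\delta\rceil$ by (\ref{gat_ns_cond_1})), into a finite sum over partitions of $A$, each term identifying the coinciding points to a single location carrying the density $f_1(\cdot-p)$ and a finite constant built from the $g_1^{(j)}(1-)$; likewise for the $y$-slots with $f_2$. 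Taking expectation over $\calc$ and using that the factorial moment measures of the homogeneous Poisson parent process are constant multiples of Lebesgue measure (\cite{DaleyandVereJones2}), one obtains that $E[S^L]$ is a finite linear combination of integrals of the form
\[
  \int_{\bbR^a}\int_{\bbR^{s+s'}}\Bigl[\prod_{u=1}^{s}f_1(\xi_u-p_{\rho(u)})1_{(0,1]}(\xi_u)\Bigr]\Bigl[\prod_{v=1}^{s'}f_2(\eta_v-p_{\rho'(v)})\Bigr]\Bigl[\prod_{k=1}^{L}h(\eta_{\tau(k)}-\xi_{\sigma(k)})\Bigr]\,d\xi\,d\eta\,dp,
\]
with $s,s'\le L$, $a\le s+s'$, where each $\xi_u$ and each $\eta_v$ occurs in at least one $h$-factor and is attached to exactly one parent, and each parent is attached to at least one offspring.

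It then remains to show each such integral is finite. Since $h\ge0$ has compact support and every offspring is joined by an $h$-factor to some $\xi_u\in(0,1]$, the $\xi$- and $\eta$-integrations can be restricted to a fixed bounded set $K_0$. Next I would integrate out the parents: under [NS](ii) one has $f_i\le g$ pointwise for an explicit nonnegative $g=g_0+g_1\in L^1(\bbR)\cap L^{q_0}(\bbR)$ with $q_0>1$, where $g_0$ is supported in $(0,1)$ with $g_0(u)\lesssim u^{\alpha-1}$ for $\alpha:=\min(1,\inf_{\tau_1}h_{1,2}(\tau_1),\inf_{\tau_2}h_{2,2}(\tau_2))>0$, and $g_1$ is bounded and integrable (with $g_1\le\tilde f$ from (\ref{eq_ns_assumption1})). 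The parent integral then factorizes as $\prod_m\Phi_m$, with $\Phi_m(z)=\int_{\bbR}\prod_l g(z_l-p)\,dp$ and $z$ ranging over the children of parent $m$. Expanding $g=g_0+g_1$, and using that the $g_1$-factors are bounded while a product of two or more $g_0$-factors confines $p$ to a bounded interval, one checks that $\Phi_m$ lies in $L^{q_1}_{\mathrm{loc}}$ for some $q_1>1$ depending only on the (bounded) number of children and on $\alpha$; since the $\Phi_m$ depend on disjoint blocks of offspring variables, $\prod_m\Phi_m\in L^{q_1}(K_0^{s+s'})$. On the other hand, integrating the $\xi$-variables one at a time and then the $\eta$-variables shows $\prod_k h(\eta_{\tau(k)}-\xi_{\sigma(k)})\in L^{q_1'}(K_0^{s+s'})$, since $h\in\bigcap_{p\ge1}L^p$ with compact support makes each one-variable integral $\int_{K_0}\prod_k|h(\,\cdot\,-c_k)|^{q_1'}$ finite and uniform in the $c_k$ (this is Lemma \ref{lem_singular_integral} applied to $h^{q_1'}$, or a direct H\"older estimate). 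H\"older's inequality then bounds the product integral, which completes the proof.

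I expect the step requiring the most care to be the claim that $\Phi_m\in L^{q_1}_{\mathrm{loc}}$ for a uniform $q_1>1$. A single parent can generate up to $2L$ of the offspring points, so $\Phi_m$ genuinely blows up along the diagonals $z_{l_1}=\dots=z_{l_j}$, and one must verify that the singularity is integrable to a power strictly larger than $1$ -- which ultimately rests only on $\alpha>0$ and on $L$ being fixed and bounded -- so that there is enough room to absorb the merely $\bigcap_p L^p$ (hence possibly unbounded) weights $h$ by H\"older. The remaining steps are either bookkeeping or direct reuses of the machinery already developed for Lemma \ref{lem_ns_existence_of_moment} together with Lemma \ref{lem_singular_integral} and Remark \ref{rem_mom_NS}.
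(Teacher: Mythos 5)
Your proposal is correct, and while the combinatorial reduction is essentially the paper's, the core analytic estimate is carried out by a genuinely different route. The paper reaches the same family of integrals $\int \prod_l h(y_{2,\pi_2(l)}-y_{1,\pi_1(l)})\prod_k\bigl(\int_{\bbR}\prod_{(i,j)\in\rho^{-1}(k)}f_i(y_{i,j}-c_k)\,dc_k\bigr)\,dy$ via the moment--factorial-moment--factorial-cumulant chain (\ref{eq_factorial_and_normal_moment}), (\ref{fmomt2fcum}) and the explicit cumulant density (\ref{eq_cumulant_NS}) of the Neyman--Scott process, which is the same bookkeeping you do by conditioning on $\calc$ and using the Stirling-number identity from Lemma \ref{lem_ns_existence_of_moment}. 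Where you diverge is in proving finiteness of each such integral: the paper deliberately never forms your function $\Phi_m(z)=\int_{\bbR}\prod_l g(z_l-p)\,dp$; instead it fixes an order of integration within each parent block, eliminating the offspring variables one at a time by the uniform H\"older bound of Lemma \ref{lem_singular_integral}, reserving exactly one offspring per block so that the unbounded $dc_k$-integral can be killed by $\int_{\bbR}f_i(y-c)\,dc=1$. You integrate the parents out first and then apply one global H\"older inequality between $\prod_m\Phi_m$ and $\prod_k h$, which forces you to control the diagonal singularity of $\Phi_m$ -- and your flagged worry there is resolvable: splitting $g=g_0+g_1$ with $g_0(u)\lesssim u^{\alpha-1}1_{(0,1)}(u)$ and applying Minkowski's integral inequality in $p$ gives $\|\Phi_m\|_{L^q(K_0^j)}\lesssim \mathrm{Leb}(K_0\oplus 1)\,\|g_0\|_{L^q}^{|S|}\|g_1\|_\infty^{j-|S|}$ plus a bounded term, finite for any $1<q<1/(1-\alpha)$ and in fact uniformly in the block size $j$, not just for $j\leq 2L$. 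The trade-off is that your argument requires this extra $L^q_{\mathrm{loc}}$ lemma for a singular convolution-type kernel but is more symmetric and transparent about where the smallness of the singularity ($\alpha>0$) enters, whereas the paper's sequential elimination needs no analysis of $\Phi_m$ at all but hides the same information inside the order of integration and the reservation of one offspring per parent. One cosmetic slip: a single $g_0$-factor already confines $p$ to a bounded interval once the offspring are in $K_0$; "two or more" is not needed.
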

\begin{proof}
  Suppose
  $M_{(a, b)}$ is the moment measure,
  $M_{[a, b]}$ is the factorial moment measure, and
  $C_{[a, b]}$ is the factorial cumulant measure of $N^S$, where $a, b\in\bbZ_{\geq 0}$.
  By Lemma \ref{lem_ns_existence_of_moment}, the process $N^S$ has locally finite factorial moment measures up to $(\lceil 2+\delta \rceil, \lceil 2+\delta \rceil)$-th order.
  By the relation (\ref{eq_factorial_and_normal_moment}) and (\ref{fmomt2fcum}), we have
  \begin{align}\label{al_expansion_log_moment}
     & \quad E\Bigl[\Bigl|\sum_{\substack{x\in N_1^S, y\in N_2^S           \\ x\in (0, 1]}}h(y-x)\Bigr|^L\Bigr] \nonumber  \\
     & = \int_{\bbR^{2L}}
    \prod_{l=1}^L 1_{\{x_{l,1}\in (0, 1]\} }h(x_{2,l} - x_{1,l})
    M_{(L, L)}(dx_{1,1}\ldots dx_{2,L}) \nonumber                          \\
     & \lesssim \sum_{m_1=1}^L \sum_{m_2=1}^L
    \sum_{\pi_1\in\calp_{m_1}^L} \sum_{\pi_2\in\calp_{m_2}^L}
    \sum_{m=1}^{m_1+m_2} \sum_{\rho\in\calp_{m}^{m_1\sqcup m_2}} \nonumber \\
     & \qquad \int_{\bbR^{m_1+m_2}} \prod_{l=1}^L
    1_{\{y_{1, \pi_1(l)}\in (0, 1]\} }h(y_{2, \pi_2(l)} - y_{1, \pi_1(l)})
    \prod_{k=1}^m |C_{[|\rho^{-1}(k)_1|, |\rho^{-1}(k)_2||]}|\Bigl(\prod_{(i ,j)\in \rho^{-1}(k)}dy_{i,j}\Bigr).
  \end{align}

  We will evaluate each summand in (\ref{al_expansion_log_moment}).
  Let $m_1, m_2=1, \ldots, L$, $\pi_1\in\calp_{m_1}^L$, $\pi_2\in\calp_{m_2}^L$, $m=1, \ldots, m_1+m_2$, and $\rho\in\calp_{m}^{m_1\sqcup m_2}$.
  By the expression (\ref{eq_cumulant_NS}) of the factorial cumulant densities of the Neyman-Scott process, we have
  \begin{align}\label{al_integral_raw}
     & \quad \int_{\bbR^{m_1+m_2}} \Bigl(\prod_{l=1}^L
    1_{\{y_{1, \pi_1(l)}\in (0, 1]\} }h(y_{2, \pi_2(l)} - y_{1, \pi_1(l)})\Bigr)
    \prod_{k=1}^m |C_{[|\rho^{-1}(k)_1|, |\rho^{-1}(k)_2||]}|\Bigl(\prod_{(i ,j)\in \rho^{-1}(k)}dy_{i,j}\Bigr) \nonumber      \\
     & \lesssim \int_{\bbR^{m_1+m_2}} \Bigl(\prod_{l=1}^L
    1_{\{y_{1, \pi_1(l)}\in (0, 1]\} }h(y_{2, \pi_2(l)} - y_{1, \pi_1(l)})\Bigr)
    \prod_{k=1}^m \Bigl(\int_{\bbR}\prod_{(i ,j)\in \rho^{-1}(k)}f_i(y_{i,j}-c_k)dc_k\Bigr)dy_{1,1}\cdots dy_{2,m_2} \nonumber \\
     & \leq \int_{K^{m_1+m_2}} \Bigl(\prod_{l=1}^L
    h(y_{2,\pi_2(l)} - y_{1,\pi_1(l)})\Bigr)
    \prod_{k=1}^m \Bigl(\int_{\bbR}\prod_{(i ,j)\in \rho^{-1}(k)}f_i(y_{i,j}-c_k)dc_k\Bigr)dy_{1,1}\cdots dy_{2,m_2},
  \end{align}
  where $K=\mathop{\text{supp}}(h) \oplus 2$.

  The key idea is basically considering integrating over all $ y_{i,j} $ for each group divided by the partition $\rho$, but when faced with the $ dc_k $-integral over a non-compact region,
  we eliminate it by using the fact that $ f_i $ is a probability density function.


  We sort the order of integration by the partition $\rho$.
  That is, we see (\ref{al_integral_raw}) as
  \begin{equation}\label{eq_integral_transformed1}
    \int_{\bbR} \int_{K^{|\rho^{-1}(1)|}} \cdots \int_{\bbR} \int_{K^{|\rho^{-1}(m)|}}
    \Bigl(\prod_{l=1}^L
    h(y_{2, \pi_2(l)} - y_{1, \pi_1(l)})\Bigr)
    \prod_{k=1}^m \prod_{(i ,j)\in \rho^{-1}(k)}f_i(y_{i,j}-c_k)dy_{i,j}dc_k.
  \end{equation}

  First, we focus on the integrals associated with $\rho^{-1}(1)$, and
  eliminate the integral with respect to $\{y_{i, j}\}_{(i, j)\in \rho^{-1}(1)}$ sequentially by bounding from above by some constants until only one $y_{i, j}$ remains to be eliminated.
  This is possible because the integral
  \begin{equation*}
    \int_{K}f_i(y_{i,j}-c_1)\prod_{a=1}^A h(\pm(y_a-y_{i,j}))dy_{i,j},
    \quad A\in\bbZ_{\geq 0}, y_a\in\bbR, a=1\ldots, A
  \end{equation*}
  can be bounded by some constant not depending on $\{y_a\}_{a=1}^A$ and $c_1$ thanks to Lemma \ref{lem_singular_integral} when $A\geq 1$ and the fact that $f_i$ is a probability density when $A=0$.
  Now, we have only one $dy_{i, j}$-integral and the $dc_1$-integral in the block associated with $\rho^{-1}(1)$.
  We can eliminate $dc_1$ integral using $\int_{\bbR}f_i(y_{i, j}-c_1)dc_{1} = 1$ because we do not have any other components that depend on $c_1$ besides $f_{i}(y_{i,j}-c_1)$.
  Finally, we only have the following form of integral
  \[
    \int_{K}\prod_{a=1}^A h(\pm(y_a-y_{i,j}))dy_{i,j}, y_a\in\bbR, a=1\ldots, A,
  \]
  that also can be bounded by some constant thanks to the Hölder's inequality.

  After the evaluation step above for $k=1$, (\ref{eq_integral_transformed1}) is bounded up to constant by
  \begin{equation}\label{eq_integral_transformed2}
    \int_{\bbR} \int_{K^{|\rho^{-1}(2)|}} \cdots \int_{\bbR} \int_{K^{|\rho^{-1}(m)|}}
    \Bigl(\prod_{l\in\call}
    h(y_{2, \pi_2(l)} - y_{1, \pi_1(l)})\Bigr)
    \prod_{k=2}^m \prod_{(i ,j)\in \rho^{-1}(k)}f_i(y_{i,j}-c_k)dy_{i,j}dc_k,
  \end{equation}
  where $\call$ is some subset of $\{1, \ldots, L\}$ such that $\rho(\{(i, \pi_i(l)); l\in\call, i=1, 2\})\subset \{2, \ldots, m\}$.
  By repeating similar argument, the quantity (\ref{eq_integral_transformed1}) is bounded up to constant by
  \begin{equation}\label{eq_integral_transformed3}
    \int_{\bbR} \int_{K^{|\rho^{-1}(k')|}} \cdots \int_{\bbR} \int_{K^{|\rho^{-1}(m)|}}
    \Bigl(\prod_{l\in\call}
    h(y_{2, \pi_2(l)} - y_{1, \pi_1(l)})\Bigr)
    \prod_{k=k'}^m \prod_{(i ,j)\in \rho^{-1}(k)}f_i(y_{i,j}-c_k)dy_{i,j}dc_k,
  \end{equation}
  where $\call$ is some subset of $\{1, \ldots, L\}$ such that $\rho(\{(i, \pi_i(l)); l\in\call, i=1, 2\})\subset \{k', \ldots, m\}$.
  By repeating this evaluation up to $k=m$, we obtain the desired result.
\end{proof}
\halflineskip

\begin{lemma}\label{lem_high_moment_NS_2}
  Suppose $h\in \bigcap_{p\geq1}L^p(\bbR)$, $h\geq 0$, and $h$ has a compact support.
  Then, for all $L\in\bbZ$ such that $1\leq L \leq \lceil 2+\delta \rceil$, we have
  \begin{equation*}
    E\Bigl[\Bigl|
      \sum_{\substack{x\in N_1^S, y\in N_2^B  \\ x\in (0, 1]}}
      h(y-x)
      \Bigr|^L\Bigr], \quad
    E\Bigl[\Bigl|
      \sum_{\substack{x\in N_2^S, y\in N_1^B  \\ x\in (0, 1]}}
      h(y-x)
      \Bigr|^L\Bigr] < \infty.
  \end{equation*}
\end{lemma}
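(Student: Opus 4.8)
The plan is to exploit the independence of $N_1^S$ and $N_2^B$ (resp. $N_2^S$ and $N_1^B$), the explicit factorial cumulant density (\ref{eq_cumulant_NS}) of the Neyman-Scott part, and the local boundedness of the factorial cumulant densities of the noise part assumed in [NS](iv). I treat only the first expression; the second follows by exchanging the roles of the indices $1$ and $2$. Since $h\geq 0$ has compact support, say $\mathrm{supp}(h)\subset[-R_0,R_0]$, the constraint $x\in(0,1]$ forces every contributing $y\in N_2^B$ to lie in the compact set $J:=[-R_0,1+R_0]$. Writing $F:=\sum_{x\in N_1^S,\,y\in N_2^B,\,x\in(0,1]}h(y-x)$, the independence of $N_1^S$ and $N_2^B$ together with the existence of the $L$-th moments of $N_1^S$ (Lemma \ref{lem_ns_existence_of_moment}, as $L\le\lceil 2+\delta\rceil$) and of $N_2^B$ (from (\ref{gat_ns_cond_2}) and stationarity) gives
\[
  E[F^L]=\int_{\bbR^L}\int_{\bbR^L}\Bigl(\prod_{l=1}^L 1_{(0,1]}(x_l)\,h(y_l-x_l)\Bigr)\,M^{N_1^S}_{(L)}(dx_1\cdots dx_L)\,M^{N_2^B}_{(L)}(dy_1\cdots dy_L),
\]
where $M^{N_1^S}_{(L)}$ and $M^{N_2^B}_{(L)}$ are the $L$-th order moment measures of the (univariate) processes $N_1^S$ and $N_2^B$.

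First I would expand $M^{N_1^S}_{(L)}$ into factorial cumulant measures (Proposition \ref{eq_factorial_and_normal_moment} with $I=1$, followed by (\ref{fmomt2fcum})) and $M^{N_2^B}_{(L)}$ into factorial moment measures (Proposition \ref{eq_factorial_and_normal_moment} with $I=1$). By [NS](iv) and (\ref{fmomt2fcum}), $N_2^B$ has locally bounded factorial moment densities $\mu^{N_2^B}_{[m]}$, $m\le\lceil 2+\delta\rceil$ (a finite sum of products of locally bounded factorial cumulant densities). Thus $E[F^L]$ becomes a finite sum, indexed by partitions, of integrals of the form
\[
  \int \Bigl(\prod_{l=1}^L h(\tilde y_l-\tilde x_l)\,1_{(0,1]}(\tilde x_l)\Bigr)\,\prod_{k} \gamma^{N_1^S}_{[\,\cdot\,]}(\cdots)\;\prod_{k'}\mu^{N_2^B}_{[\,\cdot\,]}(\cdots)\;d(\text{variables}),
\]
where each distinct $N_1^S$-variable is restricted to $(0,1]$ and each distinct $N_2^B$-variable to $J$. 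In each summand I would substitute $\gamma^{N_1^S}_{[n]}(\,\cdot\,)=\lambda g_1^{(n)}(1-)\int_{\bbR}\prod f_1(\,\cdot\,-c)\,dc$ from (\ref{eq_cumulant_NS}), the constants $g_1^{(n)}(1-)$ being finite for $n\le L\le\lceil 2+\delta\rceil$ by (\ref{gat_ns_cond_1}), and bound each factor $\mu^{N_2^B}_{[m]}$ by $\sup_{J^m}\mu^{N_2^B}_{[m]}<\infty$.

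The decisive step is to carry out the integrations in the right order. I would integrate out all $N_2^B$-variables first: each distinct $N_2^B$-variable $\tilde y$ occurs in finitely many ($\le L$) factors $h(\tilde y-\tilde x)$, and $\int_J\prod_{j=1}^n h(\tilde y-a_j)\,d\tilde y\le\|h\|_{L^n(\bbR)}^{\,n}<\infty$ by generalized H\"older, a bound uniform in the $a_j$; this removes every $h$ and every $N_2^B$-variable at the cost of a finite constant. What remains is a finite constant times $\int_{(0,1]^{m_1}}\prod_k\bigl(\lambda g_1^{(|B_k|)}(1-)\int_{\bbR}\prod_{j\in B_k} f_1(x_{j}-c_k)\,dc_k\bigr)\,\prod_j dx_j$, where $\{B_k\}$ is the partition of the distinct $N_1^S$-variables arising from the cumulant expansion. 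This integrand factors over the blocks, and for each block, writing $\psi(c):=\int_{(0,1]}f_1(x-c)\,dx\in[0,1]$, Tonelli gives $\int_{(0,1]^{|B_k|}}\int_{\bbR}\prod_{j\in B_k} f_1(x_j-c)\,dc\,\prod dx_j=\int_{\bbR}\psi(c)^{|B_k|}\,dc\le\int_{\bbR}\psi(c)\,dc=1$, using $\int_{\bbR}f_1=1$. Hence each block contributes at most $\lambda g_1^{(|B_k|)}(1-)<\infty$, every summand is finite, and the finite sum is finite. The main obstacle is purely organizational — keeping track of which variables and which $h$-factors belong to which block of the nested partitions, so that the $h$'s are absorbed by H\"older before the non-compact $dc_k$-integrals are handled — but this is strictly easier than Lemma \ref{lem_high_moment_NS}, since here only one of the two processes contributes the awkward $\int_{\bbR}dc$ factors, the noise process contributing merely bounded densities on a compact domain.
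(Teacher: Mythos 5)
Your proof is correct and follows essentially the same route as the paper's: both reduce the problem to the partition/cumulant expansion of Lemma \ref{lem_high_moment_NS}, using the explicit Neyman--Scott cumulant density (\ref{eq_cumulant_NS}) for the signal factors, the local boundedness from [NS](iv) for the noise factors, a H\"older bound to absorb the $h$-factors over a compact set, and $\int_{\bbR}f_1=1$ to dispose of the non-compact $dc$-integrals. The only difference is organizational: you factorize $E[F^L]$ into the product of the two marginal moment measures at the outset, whereas the paper works with the joint bivariate moment measure of $(N_1^S,N_2^B)$ and lets independence eliminate the mixed cumulants in (\ref{al_integral_raw_noise}); the two are equivalent.
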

\begin{proof}
  We only consider the former because a similar argument can be applied to the latter.
  The proof proceeds exactly in the same manner as Proposition \ref{lem_high_moment_NS} up until (5.20),
  and then, we have
  \begin{align}\label{al_integral_raw_noise}
     & \quad \int_{\bbR^{m_1+m_2}} \Bigl(\prod_{l=1}^L
    1_{\{y_{\pi_1(l)}\in (0, 1]\} }h(y_{2, \pi_2(l)} - y_{1, \pi_1(l)})\Bigr)
    \prod_{k=1}^m |C_{[|\rho^{-1}(k)_1|, |\rho^{-1}(k)_2||]}|\Bigl(\prod_{(i ,j)\in \rho^{-1}(k)}dy_{i,j}\Bigr) \nonumber \\
     & \lesssim \int_{\bbR^{m_1+m_2}} \Bigl(\prod_{l=1}^L
    1_{\{y_{\pi_1(l)}\in (0, 1]\} }h(y_{2, \pi_2(l)} - y_{1, \pi_1(l)})\Bigr)     \nonumber                               \\
     & \qquad \prod_{k=1}^m \Bigl(
    |C^{N_1^S}_{[|\rho^{-1}(k)_1|]}|(dy_{1,1}\times \cdots \times dy_{1,m_1})
    + |C^{N_2^B}_{[|\rho^{-1}(k)_2|]}|(dy_{2,1}\times \cdots \times dy_{2,m_2})
    \Bigr)
  \end{align}
  using the independence $N_1^S\indep N_2^B$,
  where $C^{N_1^S}$ and $C^{N_2^B}$ are the factorial cumulant measures of $N_1^S$ and $N_2^B$, respectively.
  Since the density of $C^{N_2^B}$ is locally bounded because of the assumption [NS](iv),
  we apply a similar calculation as in Proposition \ref{lem_high_moment_NS} in this case too.
\end{proof}
\halflineskip

\begin{lemma}\label{lem_high_moment_NS_3}
  Suppose $h\in \bigcap_{p\geq1}L^p(\bbR)$, $h\geq 0$, and $h$ has a compact support.
  Then, for all $L\in\bbZ$ such that $1\leq L \leq \lceil 2+\delta \rceil$, we have
  \begin{equation*}
    E\Bigl[\Bigl|
      \sum_{\substack{x\in N_1^B, y\in N_2^B  \\ x\in (0, 1]}}
      h(y-x)
      \Bigr|^L\Bigr] < \infty.
  \end{equation*}
\end{lemma}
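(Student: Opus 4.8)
The plan is to run the same machinery as in the proofs of Lemma \ref{lem_high_moment_NS} and Lemma \ref{lem_high_moment_NS_2}, now with both components being noise. Since $h\ge 0$, I would first expand the $L$-th moment through the moment measure of $(N_1^B,N_2^B)$ and rewrite it in terms of factorial moment and factorial cumulant measures exactly as in (\ref{al_expansion_log_moment}), using (\ref{eq_factorial_and_normal_moment}) and (\ref{fmomt2fcum}). The needed input is that $(N_1^B,N_2^B)$ has locally finite factorial moment measures up to order $(\lceil 2+\delta\rceil,\lceil 2+\delta\rceil)$; this follows from (\ref{gat_ns_cond_2}) together with the independence of $N_1^B$ and $N_2^B$ (a product of two random variables each with finite $L$-th moment has finite joint $L$-th moment). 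This reduces the claim to bounding a finite sum of terms of the form appearing in (\ref{al_integral_raw}), each carrying a product $\prod_{k=1}^m|C_{[|\rho^{-1}(k)_1|,|\rho^{-1}(k)_2|]}|$ of joint factorial cumulant measures of $(N_1^B,N_2^B)$.

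The key simplification is that, because $N_1^B$ and $N_2^B$ are independent, the joint factorial cumulant measure $C_{[a,b]}$ vanishes whenever $a\ge 1$ and $b\ge 1$: the factorial cumulant generating functional of $(N_1^B,N_2^B)$ splits as a functional of $h_1$ alone plus a functional of $h_2$ alone, so every mixed derivative is zero. Consequently only those partitions $\rho$ all of whose blocks lie entirely in component $1$ or entirely in component $2$ contribute, and each surviving factor is a single-component factorial cumulant measure $C^{N_1^B}_{[|\rho^{-1}(k)_1|]}$ or $C^{N_2^B}_{[|\rho^{-1}(k)_2|]}$, whose density is locally bounded by assumption [NS](iv).

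It then remains, for each such term, to bound an integral of the shape $\int\bigl(\prod_{l=1}^L 1_{\{y_{1,\pi_1(l)}\in(0,1]\}}\,h(y_{2,\pi_2(l)}-y_{1,\pi_1(l)})\bigr)\prod_{k=1}^m(\text{locally bounded density in the }y_{i,j}\text{'s of block }k)\,dy$. The indicators confine each $y_{1,j}$ to $(0,1]$, and since $h$ has compact support each surviving $h$-factor then confines the corresponding $y_{2,j}$ to a bounded set, so the whole domain sits inside $K^{m_1+m_2}$ for a fixed compact $K$; on $K^{m_1+m_2}$ the cumulant densities are bounded by a constant. What is left is precisely $\int_{K^{m_1+m_2}}\prod_{l=1}^L h(y_{2,\pi_2(l)}-y_{1,\pi_1(l)})\,dy$, which I would control by the iterated Hölder argument used at the end of the proof of Lemma \ref{lem_high_moment_NS}: integrating out one variable $y_{i,j}$ at a time leaves $\int_K\prod_a h(\pm(y_a-y_{i,j}))\,dy_{i,j}$ over finitely many $h$-factors, finite by Hölder's inequality since $h\in\bigcap_{p\ge 1}L^p(\bbR)$ (cf.\ Lemma \ref{lem_singular_integral}) and uniformly bounded in the remaining variables because $K$ is compact. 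Summing the finitely many terms then gives the result. There is no serious obstacle here; this is the easiest of the three lemmas, since no non-compact parent integrals occur. The only points needing care are the bookkeeping that independence eliminates all mixed cumulant blocks and the verification that the effective integration domain is compact, so that the locally bounded densities may legitimately be replaced by constants.
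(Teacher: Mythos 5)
Your proposal is correct and follows essentially the same route as the paper, which simply refers back to the argument around (\ref{al_integral_raw_noise}) together with [NS](iv); in particular, your observation that independence forces the mixed factorial cumulants $C_{[a,b]}$ with $a,b\geq 1$ to vanish is exactly the mechanism behind the paper's splitting of the cumulant blocks into single-component ones, and the compact-domain plus iterated H\"older bound is the same finishing step as in Lemma \ref{lem_high_moment_NS}. You have merely written out details the paper leaves implicit.
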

\begin{proof}
  By the same argument as around (\ref{al_integral_raw_noise}) and the assumption [NS](iv), we can obtain the result.
\end{proof}
\halflineskip

\begin{proposition}\label{lem_NS_MI}
  Under [NS](ii), (iii) and (iv), the noisy bivariate Neyman-Scott process model satisfies the condition [MI].
\end{proposition}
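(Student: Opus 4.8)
The plan is to verify, one by one, the three ingredients that make up condition [MI] for $N=(N_1,N_2)$ with $N_i=N_i^S+N_i^B$, exploiting the mutual independence of $N^S,N_1^B,N_2^B$ together with the moment and mixing estimates already assembled in Lemma \ref{lem_mixing_NS}, Lemma \ref{lem_ns_existence_of_moment}, Lemma \ref{lem_log_ccf_derivative_estimate_parameter_wise} and Lemmas \ref{lem_high_moment_NS}--\ref{lem_high_moment_NS_3}. Throughout one works with the $\delta>0$ furnished by [NS](iii); since $\lceil 2+\delta\rceil\ge 2+\delta$, by monotonicity of $L^p$-norms it is enough to bound $L$-th moments for the integer $L=\lceil 2+\delta\rceil$.

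\emph{The moment requirements.} For $\|N_i(C(0))\|_{2+\delta}$ I would split $N_i(C(0))=N_i^S(C(0))+N_i^B(C(0))$ and apply Minkowski's inequality: the first term has a finite $\lceil 2+\delta\rceil$-th moment by Lemma \ref{lem_ns_existence_of_moment}, and the second by \eqref{gat_ns_cond_2} of [NS](iv) (using stationarity to replace $C(0)$ by $[0,1]$). For $\|F_2^N(|\pt^j\log g(\cdot;\theta)|;C(0))\|_{2+\delta}$, set $h:=|\pt^j\log g(\cdot;\theta)|\,1_{[-r,r]}$; by Lemma \ref{lem_log_ccf_derivative_estimate_parameter_wise} this is nonnegative, compactly supported, and lies in $\bigcap_{p\ge1}L^p(\bbR)$, and since $F_2^N$ already restricts the summation to $|y-x|\le r$ replacing the integrand by this truncated $h$ changes nothing. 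Expanding $N_i=N_i^S+N_i^B$ decomposes $F_2^N(h;C(0))$ into four bilinear sums, over $N_1^S\times N_2^S$, $N_1^S\times N_2^B$, $N_1^B\times N_2^S$ and $N_1^B\times N_2^B$; their $L$-th moments are finite by Lemma \ref{lem_high_moment_NS}, Lemma \ref{lem_high_moment_NS_2} (for both mixed terms) and Lemma \ref{lem_high_moment_NS_3} respectively, after a translation of the window $C(0)$ to $(0,1]$ which is harmless by stationarity, and a final Minkowski step gives $\|F_2^N(h;C(0))\|_{\lceil 2+\delta\rceil}<\infty$.

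\emph{The mixing requirement.} This is the main step. I would first record the (standard) subadditivity of the $\alpha$-mixing coefficient across independent families: if $\calg^{(1)},\calg^{(2)},\calg^{(3)}$ are independent $\sigma$-algebras and $\cala_k,\calb_k\subseteq\calg^{(k)}$, then $\alpha(\bigvee_k\cala_k,\bigvee_k\calb_k)\le\sum_k\alpha(\cala_k,\calb_k)$. Applying it with $\calg^{(1)}=\sigma(N^S)$, $\calg^{(2)}=\sigma(N_1^B)$, $\calg^{(3)}=\sigma(N_2^B)$ (independent by construction), and noting that for every admissible window $E$ the family $\{N_i\cap E\}_{i=1,2}$ is measurable with respect to $\sigma(\{N_i^S\cap E\}_{i=1,2})\vee\sigma(N_1^B\cap E)\vee\sigma(N_2^B\cap E)$, one obtains
\[
\alpha_{2,\infty}^N(m;r)\ \le\ \alpha_{2,\infty}^{N^S}(m;r)+\alpha_{2,\infty}^{N_1^B}(m;r)+\alpha_{2,\infty}^{N_2^B}(m;r)\qquad(m\ge 2r+2),
\]
the two noise coefficients being the $I=1$ instances of the same definition. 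Raising to the power $\delta/(2+\delta)$ and summing over $m$: the $N^S$-term is summable by Lemma \ref{lem_mixing_NS} (whose conclusion already incorporates the tail bound \eqref{eq_ns_cond_2}); for each noise term, a counting argument (every fattened cell $C(l)\oplus r$ is covered by $O(1+r)$ unit cells) together with the lag-shift estimate behind \eqref{eq_note_on_mixing1} reduces $\alpha_{2,\infty}^{N_i^B}(m;r)$ to the coefficient $\alpha_{2(1+r),\infty}^{N_i^B}$ evaluated at a shifted lag, which is summable by \eqref{eq_ns_cond_3} of [NS](iv). Adding the three contributions yields $\sum_{m\ge1}\alpha_{2,\infty}^N(m;r)^{\delta/(2+\delta)}<\infty$, completing [MI].

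\emph{Main obstacle.} The delicate part is the mixing estimate: one must carefully justify the subadditivity of $\alpha(\cdot,\cdot)$ under independence (a short approximation argument on the generating algebras) and keep precise track of the bookkeeping relating $\alpha_{2,\infty}^N(\cdot\,;r)$, $\alpha_{2,\infty}^{N^S}(\cdot\,;r)$ and $\alpha_{2(1+r),\infty}^{N_i^B}(\cdot)$, in particular the loss in the lag and in the index $c_1$ produced by the fattening by $r$. Once Lemmas \ref{lem_high_moment_NS}--\ref{lem_high_moment_NS_3} and Lemma \ref{lem_log_ccf_derivative_estimate_parameter_wise} are in hand, the moment parts are comparatively routine, reducing to a decomposition into independent components and Minkowski's inequality.
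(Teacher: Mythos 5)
Your proposal is correct and follows essentially the same route as the paper: the same four-fold decomposition of $F_2^N$ via $N_i=N_i^S+N_i^B$ handled by Lemmas \ref{lem_high_moment_NS}--\ref{lem_high_moment_NS_3} together with Lemma \ref{lem_log_ccf_derivative_estimate_parameter_wise}, the same Minkowski argument for $\|N_i(C(0))\|_{2+\delta}$, and the mixing bound via Lemma \ref{lem_mixing_NS} and [NS](iv). If anything, you spell out more carefully than the paper does the subadditivity of the $\alpha$-mixing coefficient over the independent components $N^S,N_1^B,N_2^B$ and the bookkeeping with $\alpha_{2(1+r),\infty}^{N_i^B}$, a step the paper leaves implicit.
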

\begin{proof}
  Let $\delta>0$ be the one given in [NS](iii).
  The $\alpha$-mixing condition $\sum_{m=1}^{\infty}\alpha_{2, \infty}^N(m; r)^{\frac{\delta}{2+\delta}} < \infty$ is valid by Lemma \ref{lem_mixing_NS}.
  The first moment condition $\|N_i(C(0))\|_{2+\delta} < \infty, i=1, 2$ is a consequence of (\ref{gat_ns_cond_1}) and (\ref{gat_ns_cond_2}) (see Remark \ref{rem_mom_NS}).
  Finally, we will check the condition $\Bigl\|F_2^N\Bigl(|\pt^j\log g(\cdot; \theta)|; C(0)\Bigr)\Bigr\|_{2+\delta} < \infty, j=0, 1, 2, 3, \ \theta\in\Theta$.
  Fix $\theta\in\Theta$.
  Thanks to Lemma \ref{lem_log_ccf_derivative_estimate_parameter_wise}, $1_{|\cdot|\leq r}|\pt^j\log g(\cdot; \theta)|$ is in $\bigcap_{p\geq1}L^p(\bbR)$ and has a compact support.
  Therefore, by Lemmas \ref{lem_high_moment_NS}, \ref{lem_high_moment_NS_2} and \ref{lem_high_moment_NS_3}, we obtain
  \begin{align*}
     & \qquad \Bigl\|F_2^N\Bigl(|\pt^j\log g(\cdot; \theta)|; C(0)\Bigr)\Bigr\|_{2+\delta} \\
     & \leq
    \Bigl\|\sum_{\substack{x\in N_1^S, y\in N_2^S                                          \\ x\in (0, 1]}}
    1_{|y-x|\leq r}|\pt^j\log g(y-x; \theta)|
    \Bigr\|_{2+\delta}
    + \Bigl\|\sum_{\substack{x\in N_1^S, y\in N_2^B                                        \\ x\in (0, 1]}}
    1_{|y-x|\leq r}|\pt^j\log g(y-x; \theta)|
    \Bigr\|_{2+\delta}                                                                     \\
     & \qquad +
    \Bigl\|\sum_{\substack{x\in N_1^B, y\in N_2^S                                          \\ x\in (0, 1]}}
    1_{|y-x|\leq r}|\pt^j\log g(y-x; \theta)|
    \Bigr\|_{2+\delta}
    + \Bigl\|\sum_{\substack{x\in N_1^B, y\in N_2^B                                        \\ x\in (0, 1]}}
    1_{|y-x|\leq r}|\pt^j\log g(y-x; \theta)|
    \Bigr\|_{2+\delta}  <\infty.
  \end{align*}
\end{proof}
\halflineskip

\begin{remark}
  \rm
  Evaluating moments $\Bigl\|F_2^N\Bigl(|\pt^j\log g(\cdot; \theta)|; C(0)\Bigr)\Bigr\|_{2+\delta}$ in Proposition \ref{lem_NS_MI} corresponds to checking that the assumption (13) in p.398 in Prokešová \& Jensen~\cite{Palm2013} is satisfied, for instance.
  They assert that such assumption can be verified if the log derivatives of the moment density is bounded.
  However, in our case, the log derivatives of the moment density function $\pt^j\log g(\cdot; \theta)$ is not necessarily bounded so that we need the direct evaluations in the proof of Lemma \ref{lem_high_moment_NS}.
\end{remark}

\subsection{Proof of Theorems \ref{thm_NBNSP_G} and \ref{thm_NBNSP_E}}\label{subsec_prf_NBNSP}
Thanks to Theorem \ref{thm_noisy_NS_asymptotics}, the only thing we have to do is check the conditions [ID] and [ID2] for the models.

\begin{proposition}\label{prop_id_ns_gamma}
  The gamma kernel model satisfies the condition [ID].
\end{proposition}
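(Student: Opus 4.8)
The plan is to reduce \textbf{[ID]} to an identity for the bilateral gamma density $p(\cdot;\alpha_1,\alpha_2,l_1,l_2)$ appearing in (\ref{eq_gamma_ns_ccf}) and then to exploit its analyticity off the origin together with its tail behaviour. Observe first that on $\ol{\Theta}=\prod_{k=1}^{5}[p_{k,1},p_{k,2}]$ with $0<p_{k,1}<p_{k,2}<\infty$ all of $a,\alpha_1,\alpha_2,l_1,l_2$ stay in compact subsets of $(0,\infty)$, so no boundary degeneracy occurs. Since $g(u;\theta)=1+a\,p(u;\alpha_1,\alpha_2,l_1,l_2)$ by (\ref{eq_gamma_ns_ccf}) (the value at $u=0$ being irrelevant for ``a.e.''), the hypothesis $g(\cdot;\theta)=g(\cdot;\theta^{*})$ a.e. on $[-r,r]$ is equivalent to
\[
  a\,p(u;\alpha_1,\alpha_2,l_1,l_2)=a^{*}\,p(u;\alpha_1^{*},\alpha_2^{*},l_1^{*},l_2^{*}),\qquad\text{a.e. }u\in[-r,r].
\]
Next I would note that, by the integral representation (\ref{eq_bigamma_integral_expression}), $u\mapsto p(u;\alpha_1,\alpha_2,l_1,l_2)$ extends to a holomorphic function on the right half-plane: for $v>0$ the integrand $v^{\alpha_2-1}(u+v/(l_1+l_2))^{\alpha_1-1}e^{-v}$ is holomorphic in $u$ there (principal branch, $u+v/(l_1+l_2)$ having positive real part) and, for $u$ in a compact subset of the half-plane, is dominated by an $L^{1}(dv)$-function, so one may differentiate under the integral sign. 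In particular $p(\cdot;\tau_1,\tau_2)$ is real-analytic on $(0,\infty)$, and since a set of positive Lebesgue measure has an accumulation point, the a.e.\ identity on $(0,r]$ propagates by the identity theorem to $a\,p(\cdot;\alpha_1,\alpha_2,l_1,l_2)=a^{*}\,p(\cdot;\alpha_1^{*},\alpha_2^{*},l_1^{*},l_2^{*})$ on all of $(0,\infty)$, and by symmetry on $(-\infty,0)$.

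I would then read the parameters off the tails. Writing $(u+v/(l_1+l_2))^{\alpha_1-1}=u^{\alpha_1-1}(1+v/((l_1+l_2)u))^{\alpha_1-1}$ and bounding the last factor, for $u\ge1$, by $1$ if $\alpha_1<1$ and by $(1+v/(l_1+l_2))^{\alpha_1-1}$ if $\alpha_1\ge1$ (each integrable against $v^{\alpha_2-1}e^{-v}\,dv$), dominated convergence in (\ref{eq_bigamma_integral_expression}) yields
\[
  p(u;\alpha_1,\alpha_2,l_1,l_2)=\frac{l_1^{\alpha_1}l_2^{\alpha_2}}{(l_1+l_2)^{\alpha_1}\Gamma(\alpha_1)}\,u^{\alpha_1-1}e^{-l_1u}\,(1+o(1)),\qquad u\to\infty.
\]
Taking logarithms in the identity on $(0,\infty)$, dividing by $u$ and letting $u\to\infty$ forces $l_1=l_1^{*}$; dividing the remaining difference by $\log u$ forces $\alpha_1=\alpha_1^{*}$. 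Applying the symmetry relation (\ref{eq_bigamma_symmetry}), the same computation for $p(\cdot;\alpha_2,\alpha_1,l_2,l_1)$ (equivalently, the tail as $u\to-\infty$) gives $l_2=l_2^{*}$ and $\alpha_2=\alpha_2^{*}$. Therefore $p(\cdot;\alpha_1,\alpha_2,l_1,l_2)\equiv p(\cdot;\alpha_1^{*},\alpha_2^{*},l_1^{*},l_2^{*})$; since the integrand in (\ref{eq_bigamma_integral_expression}) is strictly positive, this common function is positive on $[-r,r]\setminus\{0\}$, in particular not a.e.\ zero on $[-r,r]$, and evaluating the identity above at such a point gives $a=a^{*}$. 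Hence $\theta=\theta^{*}$, which is \textbf{[ID]}.

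I expect the technical heart to be the two analytic inputs just invoked: that $p(\cdot;\tau_1,\tau_2)$ is genuinely real-analytic on $(0,\infty)$ (so that ``a.e.\ on $[-r,r]$'' can be upgraded to ``everywhere on $(0,\infty)$'', $r$ being an arbitrary user-chosen constant), and the precise leading-order asymptotic $c(\tau)\,u^{\alpha_1-1}e^{-l_1u}$ with $c(\tau)>0$, whose justification needs a uniform $L^{1}$-majorant in (\ref{eq_bigamma_integral_expression}) together with the small case split on $\alpha_1\gtrless1$; the remaining bookkeeping of exponential and logarithmic orders is routine. As an alternative to the tail analysis, once equality holds on all of $\bbR$ one may instead pass to characteristic functions and invoke the injectivity of the bilateral-gamma parametrisation of K{\"u}chler \& Tappe \cite{kuchler2008bilateral}; the tail argument, however, is more self-contained.
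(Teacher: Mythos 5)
Your proof is correct and follows essentially the same route as the paper: upgrade the a.e.\ identity on $[-r,r]$ to an everywhere identity on $\bbR\setminus\{0\}$ via analyticity and the identity theorem, then read off $(l_1,\alpha_1)$ and $(l_2,\alpha_2)$ from the tail behaviour as $u\to+\infty$ and $u\to-\infty$ respectively, and finally recover $a$. The only difference is that you derive the leading-order asymptotics of the bilateral gamma density directly from (\ref{eq_bigamma_integral_expression}) and justify the real-analyticity via a holomorphic extension to the right half-plane, whereas the paper cites K{\"u}chler \& Tappe for these facts; your version is more self-contained.
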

\begin{proof}
  Let $\theta=(a, l_1, l_2, \alpha_1, \alpha_2), \theta^*=(a^*, l_1^*, l_2^*, \alpha_1^*, \alpha_2^*)\in\overline{\Theta}$ and
  assume
  \[
    g(\cdot; \theta) = g(\cdot; \theta^*) \quad \text{a.e. on $[-r, r]$}.
  \]
  By the analyticity of $g(\cdot; \theta)$ and $g(\cdot; \theta^*)$ on $\{z\in\bbC; \mathrm{Re}(z) \neq 0\}$ , we can assume
  \begin{equation}\label{eq_NS_id_proof_1}
    g(u; \theta) = g(u; \theta^*), \quad u\in\bbR\setminus \{0\}.
  \end{equation}
  because of the identity theorem for analytic functions.
  Then, by considering the limits $u\to \pm \infty$, we have
  $(l_1, l_2, \alpha_1, \alpha_2) = (l_1^*, l_2^*, \alpha_1^*, \alpha_2^*)$
  thanks to the asymptotic behavior of the bilateral gamma distribution shown in p.2483 of K{\"u}chler \& Tappe~\cite{kuchler2008shapes}.
  Thus, we also have $a = a^*$ by (\ref{eq_NS_id_proof_1}) and obtain $\theta = \theta^*$.
\end{proof}
\halflineskip

We will have to investigate the orders of Laplace transforms of regularly varying functions near zero to prove the positivity of the observed information.
For this sake, we introduce Lemma \ref{lem_abelian} below.

\begin{lemma}\label{lem_abelian}
  Suppose $L: \bbR_{>0} \to \bbR_{>0}$ is a slowly varying function at $0$, a continuous function $f: \bbR_{>0} \to \bbR_{>0}$ satisfies $f(t)\sim t^{\rho-1} L(t)$ as $t\to0$ for some $\rho>0$, and
  the Laplace transform $\call(f)(u) = \int_0^{\infty} f(t)e^{-ut}dt$ exists for all $u>0$.
  Then, we have \[\call(f)(u) \sim \Gamma(\rho)u^{-\rho} L(1/u)\] as $u\to\infty$.
\end{lemma}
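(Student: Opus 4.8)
The plan is to prove this as a classical Abelian theorem for the Laplace transform: substitute $t=s/u$ to turn $\call(f)(u)$ into (a truncation of) the Gamma integral with a slowly varying perturbation, and then pass to the limit by dominated convergence, using Potter's bounds to control the slowly varying factor. The only preparatory reduction is to cut off the part of the integral where the asymptotic $f(t)\sim t^{\rho-1}L(t)$ is not available.

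Concretely, first I would fix $\eta\in(0,1)$ and, using $f(t)\sim t^{\rho-1}L(t)$ as $t\to0$, choose $\epsilon>0$ small enough that
\[
  (1-\eta)\,t^{\rho-1}L(t)\le f(t)\le (1+\eta)\,t^{\rho-1}L(t),\qquad 0<t<\epsilon,
\]
shrinking $\epsilon$ further so that it lies inside the range where Potter's bound for $L$ near $0$ is valid. Write $\call(f)(u)=\int_0^\epsilon f(t)e^{-ut}\,dt+\int_\epsilon^\infty f(t)e^{-ut}\,dt$. The tail is negligible: for $u\ge 1$ we have $\int_\epsilon^\infty f(t)e^{-ut}\,dt\le e^{-(u-1)\epsilon}\call(f)(1)$, which (since $\call(f)(1)<\infty$ by hypothesis) is $o\bigl(u^{-\rho}L(1/u)\bigr)$, because $u\mapsto L(1/u)$ is slowly varying at infinity and hence $u^{-\rho}L(1/u)$ decays only sub-exponentially. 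So it suffices to prove that $\int_0^\epsilon t^{\rho-1}L(t)e^{-ut}\,dt\sim\Gamma(\rho)\,u^{-\rho}L(1/u)$ as $u\to\infty$; the two-sided bound above and the arbitrariness of $\eta$ then give the claim by a squeeze.

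For this main term I would substitute $t=s/u$, so that
\[
  \frac{u^{\rho}}{L(1/u)}\int_0^\epsilon t^{\rho-1}L(t)e^{-ut}\,dt
  =\int_0^{\epsilon u} s^{\rho-1}\,\frac{L(s/u)}{L(1/u)}\,e^{-s}\,ds.
\]
As $u\to\infty$, the domain indicator $1_{(0,\epsilon u)}(s)$ tends to $1$ and, by slow variation of $L$ at $0$, $L(s/u)/L(1/u)\to 1$ for each fixed $s>0$, so the integrand converges pointwise to $s^{\rho-1}e^{-s}$. To justify passing to the limit I would invoke Potter's bound: for some $\delta\in(0,\rho)$ there is a constant $C$ such that $L(x)/L(y)\le C\max\{(x/y)^{\delta},(x/y)^{-\delta}\}$ for all sufficiently small $x,y$; applied with $x=s/u$, $y=1/u$ (legitimate since $s/u<\epsilon$ lies in the admissible range), this bounds the integrand, for all large $u$ and all $s$, by $C\bigl(s^{\rho-1-\delta}+s^{\rho-1+\delta}\bigr)e^{-s}$, which is in $L^1(0,\infty)$ precisely because $\delta<\rho$. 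Dominated convergence then yields $\int_0^\infty s^{\rho-1}e^{-s}\,ds=\Gamma(\rho)$, completing the argument.

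The step I expect to be the main obstacle is exactly this uniform control of the slowly varying factor on the full half-line of integration: pointwise convergence of $L(s/u)/L(1/u)$ is trivial, but the integrable majorant requires the quantitative Potter inequality together with the choice $\delta<\rho$, and one must be careful that $\epsilon$ was fixed small enough for that inequality (and the two-sided estimate on $f$) to hold simultaneously. Everything else — the substitution, the exponential tail bound, and the $(1\pm\eta)$ squeeze — is routine.
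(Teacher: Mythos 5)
Your proof is correct, but it takes a different route from the paper's. The paper does not prove the Abelian theorem directly: it invokes Feller's Karamata--Tauberian machinery (Theorems XIII.5.3--XIII.5.4) to reduce the statement about $\call(f)(u)$ as $u\to\infty$ to a statement about the integrated function $F(t)=\int_0^t f(s)\,ds$ as $t\to 0$, and then establishes $F(t)\sim \rho^{-1}t^{\rho}L(t)$ by a change of variables combined with Karamata's theorem on integrals of regularly varying functions (Feller VIII.9.1(a)). You instead work directly on the Laplace transform: truncate at $\epsilon$, kill the tail by the elementary bound $\int_\epsilon^\infty f(t)e^{-ut}dt\le e^{-(u-1)\epsilon}\call(f)(1)$ together with the fact that $u^{-\rho}L(1/u)$ decays slower than any exponential, substitute $t=s/u$ in the main term, and pass to the limit by dominated convergence with Potter's bounds (the choice $\delta<\rho$ being exactly what makes the majorant $C(s^{\rho-1-\delta}+s^{\rho-1+\delta})e^{-s}$ integrable). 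All the steps check out, including the point you flag as delicate: the Potter inequality applies because both arguments $s/u$ and $1/u$ stay below the fixed threshold $\epsilon$ on the domain of integration once $u\ge 1/\epsilon$. What your approach buys is a self-contained, essentially elementary argument (modulo Potter's theorem, which is standard in Karamata theory); what the paper's approach buys is brevity, at the cost of outsourcing the core analytic content to cited Tauberian theorems whose hypotheses must be matched to the ``at $0$'' versus ``at $\infty$'' conventions.
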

\begin{proof}
  Because of Theorem XIII.5.3 and Theorem XIII.5.4 in Feller~\cite{feller1991introduction},
  we only have to prove
  \[
    F(t) = \int_0^t f(s)ds \sim \frac{1}{\rho} t^{\rho} L(t)
  \]
  as $t\to 0$.

  Let $\tilde{f}(s) = f(1/s) / s^2, s>0$.
  Then, we have $F(t) = \int_{1/t}^{\infty} \tilde{f}(s') ds'$ by the change of variable $s=1/s'$.
  By the assumption, we have $\tilde{f}(s) \sim s^{-(\rho+1)}L(1/s)$ as $s\to\infty$
  so that $\tilde{f}(s)$ is regularly varying with exponent $-(\rho+1)$.
  Thus, we can apply Theorem VIII.9.1 (a) in Feller~\cite{feller1991introduction} with $Z=\tilde{f}$ and $p=0$ to obtain
  \[
    \frac{t' \tilde{f}(t')}{\int_t'^{\infty}\tilde{f}(s) ds} \to -(-(\rho+1)+1) = \rho
  \]
  as $t'\to\infty$.
  Substituting $t'=1/t$ and letting $t\to 0$, we derive
  \[
    \rho F(t)
    =\rho\int_{t'}^{\infty}\tilde{f}(s) ds
    \sim
    t' \tilde{f}(t') = \tilde{f}(1/t) / t \sim tf(t)
    =t^{\rho} L(t)
  \]
  as $x\to 0$.
\end{proof}


\begin{proposition}\label{prop_id2_ns_gamma}
  The gamma kernel model satisfies the condition [ID2].
\end{proposition}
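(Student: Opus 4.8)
The plan is to reduce positive definiteness of $\Gamma$ to a linear–independence statement for the $\theta$–derivatives of $g$ at $\theta^*$, and then to prove that independence by transplanting the problem, via the two–sided Laplace transform, to the moment generating function of the bilateral gamma law, where everything becomes an elementary computation with singularities. For $c\in\bbR^5$, since $g(u;\theta^*)\ge 1$ and $\lambda_1^*\lambda_2^*>0$,
\[
c'\Gamma c=\lambda_1^*\lambda_2^*\int_{|u|\le r}\frac{\bigl(c'\pt g(u;\theta^*)\bigr)^2}{g(u;\theta^*)}\,du\ \ge\ 0,
\]
the integral being finite (the integrand lies in $L^1([-r,r])$ because $g(u;\theta^*)\gtrsim p(u)$ neutralizes the mild singularity of $p$ at $0$) and equal to $0$ precisely when $c'\pt g(\cdot;\theta^*)=0$ a.e.\ on $[-r,r]$. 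Since $\partial_a g=p$, $\partial_{\alpha_i}g=a\,\partial_{\alpha_i}p$, $\partial_{l_i}g=a\,\partial_{l_i}p$ and $a^*>0$, it is enough to show that $p$, $\partial_{\alpha_1}p$, $\partial_{\alpha_2}p$, $\partial_{l_1}p$, $\partial_{l_2}p$, all evaluated at $\theta^*$, are linearly independent on $[-r,r]$.

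Suppose a nontrivial combination of these five functions vanishes a.e.\ on $[-r,r]$. Each of them is real-analytic on $(0,\infty)$ and on $(-\infty,0)$ — from the confluent–hypergeometric representation of $p$ recorded above (differentiated in the parameters) together with the symmetry (\ref{eq_bigamma_symmetry}) — so the combination vanishes identically on $\bbR\setminus\{0\}$. Now $p(\cdot;\theta^*)$ is the density of $\mathrm{Gamma}(\alpha_2^*,l_2^*)-\mathrm{Gamma}(\alpha_1^*,l_1^*)$; its tails decay like a power of $u$ times $e^{-l_2^*u}$ as $u\to+\infty$ and like a power of $|u|$ times $e^{-l_1^*|u|}$ as $u\to-\infty$ (Küchler \& Tappe \cite{kuchler2008shapes}), and near $u=0$ it is regularly varying, which is exactly the regime of Lemma~\ref{lem_abelian}; the same behaviour, up to extra $\log u$ or $u$ factors, holds for $\partial_{\alpha_i}p$ and $\partial_{l_i}p$. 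Hence the bilateral Laplace transform
\[
\Psi(z):=\int_{\bbR}e^{zu}p(u;\theta^*)\,du=\Bigl(\frac{l_1^*}{l_1^*+z}\Bigr)^{\alpha_1^*}\Bigl(\frac{l_2^*}{l_2^*-z}\Bigr)^{\alpha_2^*}
\]
and the transforms of $\partial_{\alpha_i}p$, $\partial_{l_i}p$ all converge on the strip $-l_1^*<\mathrm{Re}(z)<l_2^*$, differentiation under the integral sign is legitimate there, so those transforms equal $\partial_{\alpha_i}\Psi$ and $\partial_{l_i}\Psi$. Transforming the vanishing combination and dividing by $\Psi(z)\neq0$, the identity becomes, with $\phi_1(z)=\log\frac{l_1^*}{l_1^*+z}$ and $\phi_2(z)=\log\frac{l_2^*}{l_2^*-z}$,
\[
c_1+a^*c_2\,\phi_1(z)+a^*c_3\,\phi_2(z)+\frac{a^*\alpha_1^*c_4}{l_1^*}\cdot\frac{z}{l_1^*+z}-\frac{a^*\alpha_2^*c_5}{l_2^*}\cdot\frac{z}{l_2^*-z}=0
\]
on the strip, hence on $\bbC\setminus\{-l_1^*,l_2^*\}$ by analytic continuation.

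The five functions $1$, $\phi_1$, $\phi_2$, $\frac{z}{l_1^*+z}$, $\frac{z}{l_2^*-z}$ are linearly independent, which one reads off from their singularities: near $z=-l_1^*$ only $\frac{z}{l_1^*+z}$ has a pole (while $\phi_1$ has merely a logarithmic singularity and the other three are holomorphic there), so its coefficient — a positive multiple of $c_4$ — vanishes; with $c_4=0$ the uncancelled logarithmic singularity of $\phi_1$ at $-l_1^*$ forces $c_2=0$; the mirror-image analysis at $z=l_2^*$ gives $c_5=0$ and then $c_3=0$; finally $c_1=0$. As $a^*,\alpha_i^*,l_i^*>0$ this yields $c=0$, contradicting $c\neq0$; hence $\Gamma$ is positive definite. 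The one step I expect to demand real care is the justification that the bilateral transform of $p$ and of its parameter derivatives converges on the whole strip and may be differentiated under the integral — i.e.\ controlling $p$ and $\partial_\theta p$ both near $u=0$ (through the $\Phi$-representation and Lemma~\ref{lem_abelian}) and as $|u|\to\infty$; once the transformed identity is in hand, the rest is routine complex analysis. If one prefers to avoid extending the relation off $[-r,r]$, an alternative is to keep it on $(0,\infty)$, apply the one-sided Laplace transform, and use Lemma~\ref{lem_abelian} to compare the $s\to\infty$ asymptotics of the five transforms — but that route needs subleading terms and is less clean.
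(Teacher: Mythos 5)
Your proof is correct and reaches the same conclusion, but by a genuinely different — in fact dual — route. The paper stays in the $u$-domain: it writes each component of $\pt g(u;\theta^*)$ explicitly as an exponential--power prefactor times combinations of auxiliary Laplace integrals $p_0^{\pm},\dots,p_3^{\pm}$, invokes the Abelian Lemma \ref{lem_abelian} to pin down their asymptotics as $u\to\pm\infty$, and then eliminates $c_4,c_5$ (the fastest-growing terms $u\,p_0^{\pm}$), then $c_2,c_3$ (the logarithmic terms, after a cancellation between $\log(u)p_0^{+}$ and $p_2^{+}$), then $c_1$. You instead pass to the two-sided Laplace transform, where the bilateral gamma moment generating function factorizes and the five derivatives become $\Psi$ times the elementary functions $1,\phi_1,\phi_2,\frac{z}{l_1^*+z},\frac{z}{l_2^*-z}$, whose independence is read off from the pole-versus-branch-point hierarchy at $z=-l_1^*$ and $z=l_2^*$. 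The two hierarchies correspond exactly (the $u\to+\infty$ behaviour of $p$ and its parameter derivatives is governed by the singularity of $\Psi$ at $z=l_2^*$, and symmetrically for $u\to-\infty$ and $z=-l_1^*$), so the elimination order is the same. What your route buys is a cleaner algebraic structure — no explicit derivative computations and no need for Lemma \ref{lem_abelian} in the main argument — at the price of the step you correctly flag: convergence of the transforms of $p$ and of its parameter derivatives on the strip $-l_1^*<\mathrm{Re}(z)<l_2^*$ and the interchange of parameter differentiation with the integral. That step is genuinely fillable (integrability near $0$ holds since $\alpha_1^*+\alpha_2^*>0$ even with the extra logarithmic factors, and the exponential tails give a dominating function locally uniformly in $\theta$), so I regard this as a valid alternative proof. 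Two small points: for the singularity analysis you need not continue the identity off the strip (which would raise branch-cut issues for $\phi_1,\phi_2$); real limits $z\downarrow -l_1^*$ and $z\uparrow l_2^*$ taken inside the strip already suffice. And note that both proofs share, and equally depend on, the analytic-continuation step extending $c'\pt g(\cdot;\theta^*)=0$ from $[-r,r]$ to $\bbR\setminus\{0\}$.
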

\begin{proof}
  It is sufficient to show that, for all $c\in\bbR^5$,
  \[
    (\forall u\in [-r, r]: \ c' \pt g(u; \theta^*) = 0 ) \quad \Rightarrow \quad  c = 0
  \]
  because
  \begin{gather*}
    \Gamma = \lambda_1^*\lambda_2^*\int_{|u|\leq r} \frac{\pt g(u; \theta^*)^{\otimes 2}}{g(u; \theta^*)}du \quad \text{is positive definite} \\
    \Leftrightarrow
    \forall c\in\bbR^5, c\neq 0:\quad
    \int_{|u|\leq r} \frac{c'\pt g(u; \theta^*)^{\otimes 2}c}{g(u; \theta^*)}du  > 0 \\
    \Leftrightarrow
    \forall c\in\bbR^5, c\neq 0, \exists u\in[-r, r]:\quad
    c'\pt g(u; \theta^*)^{\otimes 2}c > 0. \quad (\because \text{continuity of $\pt g(\cdot; \theta^*)$})
  \end{gather*}

  Let $q^+(\theta) = q^+(\alpha_1, \alpha_2, l_1, l_2) = \frac{l_1^{\alpha_1} l_2^{\alpha_2}}
    {(l_1+l_2)^{\alpha_1}\Gamma(\alpha_1)\Gamma(\alpha_2)}$,
  $q_-(\theta) = q^+(\alpha_2, \alpha_1, l_2, l_1)$.
  Then, by (\ref{eq_gamma_ns_ccf}) and (\ref{eq_bigamma_integral_expression}), we have
  \begin{gather*}
    g(u; \theta) = 1 + a q^+(\theta)e^{-l_1u}
    \int_0^{\infty} v^{\alpha_2-1}\Bigl(
    u + \frac{v}{l_1+l_2}
    \Bigr)^{\alpha_1-1}
    e^{-v}dv, \quad u>0, \\
    g(u; \theta) = 1 + a q^-(\theta)e^{-l_2|u|}
    \int_0^{\infty} v^{\alpha_1-1}\Bigl(
    |u| + \frac{v}{l_1+l_2}
    \Bigr)^{\alpha_2-1}
    e^{-v}dv, \quad u<0. \\
  \end{gather*}

  In the following, we will sometimes abbreviate $q^{\pm}(\theta)$ as $q^{\pm}$ for ease of notation.

  For $u>0$,
  we calculate the derivatives as
  \begin{align*}
    \partial_{a}g(u; \theta)
     & = q^+e^{-l_1u}
    \int_0^{\infty} v^{\alpha_2-1}\Bigl(
    u + \frac{v}{l_1+l_2}
    \Bigr)^{\alpha_1-1}e^{-v}dv                          \\
     & = q^+e^{-l_1u} u^{\alpha_1+\alpha_2-1}
    \int_0^{\infty} t^{\alpha_2-1}\Bigl(
    1 + \frac{t}{l_1+l_2}
    \Bigr)^{\alpha_1-1}e^{-ut}dt                         \\
     & =  q^+e^{-l_1u} u^{\alpha_1+\alpha_2-1} p_0^+(u),
  \end{align*}
  \begin{align*}
    \partial_{\alpha_1}g(u; \theta)
     & = a (\partial_{\alpha_1}q^+)e^{-l_1u}
    \int_0^{\infty} v^{\alpha_2-1}\Bigl(
    u + \frac{v}{l_1+l_2}
    \Bigr)^{\alpha_1-1}e^{-v}dv                                              \\
     & \quad + aq^+e^{-l_1u}
    \int_0^{\infty} v^{\alpha_2-1}\Bigl(
    u + \frac{v}{l_1+l_2}
    \Bigr)^{\alpha_1-1}
    \log\Bigl(u+\frac{v}{l_1+l_2}\Bigr)e^{-v}dv                              \\
     & = a (\partial_{\alpha_1}q^+)e^{-l_1u}u^{\alpha_1+\alpha_2-1} p_0^+(u) \\
     & \quad + a q^+ e^{-l_1u} u^{\alpha_1+\alpha_2-1}
    \int_0^{\infty} t^{\alpha_2-1}\Bigl(
    1 + \frac{t}{l_1+l_2}
    \Bigr)^{\alpha_1-1}
    \Bigl(
    \log\Bigl(1 + \frac{t}{l_1+l_2}\Bigr) + \log(u)
    \Bigr)e^{-ut}dt                                                          \\
     & = a e^{-l_1u} u^{\alpha_1+\alpha_2-1}\Bigl(
    (\partial_{\alpha_1}q^+) p_0^+(u)
    + q^+\log(u) p_0^+(u)
    + q^+p_1^+(u)
    \Bigr),
  \end{align*}
  \begin{align*}
    \partial_{\alpha_2}g(u; \theta)
     & = a (\partial_{\alpha_2}q^+)e^{-l_1u}
    \int_0^{\infty} v^{\alpha_2-1}\Bigl(
    u + \frac{v}{l_1+l_2}
    \Bigr)^{\alpha_1-1}e^{-v}dv                                              \\
     & \quad + aq^+e^{-l_1u}
    \int_0^{\infty} v^{\alpha_2-1}\Bigl(
    u + \frac{v}{l_1+l_2}
    \Bigr)^{\alpha_1-1}\log(v) e^{-v}dv                                      \\
     & = a (\partial_{\alpha_2}q^+)e^{-l_1u}u^{\alpha_1+\alpha_2-1} p_0^+(u) \\
     & \quad + a q^+ e^{-l_1u} u^{\alpha_1+\alpha_2-1}
    \int_0^{\infty} t^{\alpha_2-1}\Bigl(
    1 + \frac{t}{l_1+l_2}
    \Bigr)^{\alpha_1-1}
    \Bigl(
    \log(t) + \log(u)
    \Bigr)e^{-ut}dt                                                          \\
     & = a e^{-l_1u} u^{\alpha_1+\alpha_2-1}\Bigl(
    (\partial_{\alpha_2}q^+)p_0^+(u)
    + q^+\log(u) p_0^+(u)
    + q^+ p_2^+(u)
    \Bigr),
  \end{align*}
  \begin{align*}
    \partial_{l_1}g(u; \theta)
     & = a (\partial_{l_1}q^+) e^{-l_1u}
    \int_0^{\infty} v^{\alpha_2-1}\Bigl(
    u + \frac{v}{l_1+l_2}
    \Bigr)^{\alpha_1-1}e^{-v}dv                                                  \\
     & \quad + aq^+ \times (-u)e^{-l_1u}
    \int_0^{\infty} v^{\alpha_2-1}\Bigl(
    u + \frac{v}{l_1+l_2}
    \Bigr)^{\alpha_1-1}e^{-v}dv                                                  \\
     & \quad + aq^+e^{-l_1u}
    \int_0^{\infty} v^{\alpha_2-1}\Bigl(
    u + \frac{v}{l_1+l_2}
    \Bigr)^{\alpha_1-2}\frac{1-\alpha_1}{(l_1+l_2)^2}ve^{-v}dv                   \\
     & = a e^{-l_1u} u^{\alpha_1+\alpha_2-1}
    ((\partial_{l_1}q^+) p_0^+(u) - q^+  up_0^+(u))                              \\
     & \quad + (1-\alpha_1) (l_1+l_2)^{-2}a q^+e^{-l_1u} u^{\alpha_1+\alpha_2-1}
    \int_0^{\infty} t^{\alpha_2}\Bigl(
    1 + \frac{t}{l_1+l_2}
    \Bigr)^{\alpha_1-2}e^{-ut}dt                                                 \\
     & =a e^{-l_1u} u^{\alpha_1+\alpha_2-1}
    \Bigl((\partial_{l_1}q^+) p_0^+(u) - q^+  up_0^+(u) + (1-\alpha_1)(l_1+l_2)^{-2}q^+ p_3^+(u)\Bigr),
  \end{align*}
  and
  \begin{align*}
    \partial_{l_2}g(u; \theta)
     & = a (\partial_{l_2}q^+)e^{-l_1u}
    \int_0^{\infty} v^{\alpha_2-1}\Bigl(
    u + \frac{v}{l_1+l_2}
    \Bigr)^{\alpha_1-1}e^{-v}dv                                \\
     & \quad + aq^+e^{-l_1u}
    \int_0^{\infty} v^{\alpha_2-1}\Bigl(
    u + \frac{v}{l_1+l_2}
    \Bigr)^{\alpha_1-2}\frac{1-\alpha_1}{(l_1+l_2)^2}ve^{-v}dv \\
     & =a e^{-l_1u} u^{\alpha_1+\alpha_2-1}
    \Bigl((\partial_{l_2}q^+) p_0^+(u) + (1-\alpha_1)(l_1+l_2)^{-2}q^+ p_3^+(u)\Bigr),
  \end{align*}
  where
  \begin{align*}
    p_0^+(u) = \int_0^{\infty} t^{\alpha_2-1}\Bigl(
    1 + \frac{t}{l_1+l_2}
    \Bigr)^{\alpha_1-1}e^{-ut}dt,
  \end{align*}
  \begin{align*}
    p_1^+(u) = \int_0^{\infty} t^{\alpha_2-1}\Bigl(
    1 + \frac{t}{l_1+l_2}
    \Bigr)^{\alpha_1-1} \log\Bigl(
    1 + \frac{t}{l_1+l_2}
    \Bigr)e^{-ut}dt,
  \end{align*}
  \begin{align*}
    p_2^+(u) = \int_0^{\infty} t^{\alpha_2-1}\Bigl(
    1 + \frac{t}{l_1+l_2}
    \Bigr)^{\alpha_1-1} \log(t)e^{-ut}dt,
  \end{align*}
  and
  \begin{align*}
    p_3^+(u) =\int_0^{\infty} t^{\alpha_2}\Bigl(
    1 + \frac{t}{l_1+l_2}
    \Bigr)^{\alpha_1-2} e^{-ut}dt.
  \end{align*}

  For $u<0$,
  \begin{align*}
    \partial_{a}g(u; \theta)
     & = q^-e^{-l_2|u|}
    \int_0^{\infty} v^{\alpha_1-1}\Bigl(
    |u| + \frac{v}{l_1+l_2}
    \Bigr)^{\alpha_2-1}e^{-v}dv                                \\
     & = q^-e^{-l_2|u|} |u|^{\alpha_1+\alpha_2-1}
    \int_0^{\infty} t^{\alpha_1-1}\Bigl(
    1 + \frac{t}{l_1+l_2}
    \Bigr)^{\alpha_2-1}e^{-|u|t}dt                             \\
     & =  q^-e^{-l_2|u|} |u|^{\alpha_1+\alpha_2-1} p_0^-(|u|),
  \end{align*}
  \begin{align*}
    \partial_{\alpha_1}g(u; \theta)
     & = a (\partial_{\alpha_1}q^-)e^{-l_2|u|}
    \int_0^{\infty} v^{\alpha_1-1}\Bigl(
    |u| + \frac{v}{l_1+l_2}
    \Bigr)^{\alpha_2-1}e^{-v}dv                                                    \\
     & \quad + aq^-e^{-l_2|u|}
    \int_0^{\infty} v^{\alpha_1-1}\Bigl(
    |u| + \frac{v}{l_1+l_2}
    \Bigr)^{\alpha_2-1}\log(v) e^{-v}dv                                            \\
     & = a (\partial_{\alpha_1}q^-)e^{-l_2|u|}|u|^{\alpha_1+\alpha_2-1} p_0^-(|u|) \\
     & \quad + a q^- e^{-l_2|u|} |u|^{\alpha_1+\alpha_2-1}
    \int_0^{\infty} t^{\alpha_1-1}\Bigl(
    1 + \frac{t}{l_1+l_2}
    \Bigr)^{\alpha_2-1}
    \Bigl(
    \log(t) + \log(|u|)
    \Bigr)e^{-|u|t}dt                                                              \\
     & = a e^{-l_2|u|} |u|^{\alpha_1+\alpha_2-1}\Bigl(
    (\partial_{\alpha_1}q^-)p_0^-(|u|)
    + q^-\log(|u|) p_0^-(|u|)
    + q^- p_2^-(|u|)
    \Bigr),
  \end{align*}
  \begin{align*}
    \partial_{\alpha_2}g(u; \theta)
     & = a (\partial_{\alpha_2}q^-)e^{-l_2|u|}
    \int_0^{\infty} v^{\alpha_1-1}\Bigl(
    |u| + \frac{v}{l_1+l_2}
    \Bigr)^{\alpha_2-1}e^{-v}dv                                                    \\
     & \quad + aq^-e^{-l_2|u|}
    \int_0^{\infty} v^{\alpha_1-1}\Bigl(
    |u| + \frac{v}{l_1+l_2}
    \Bigr)^{\alpha_2-1}
    \log\Bigl(|u|+\frac{v}{l_1+l_2}\Bigr)e^{-v}dv                                  \\
     & = a (\partial_{\alpha_2}q^-)e^{-l_2|u|}|u|^{\alpha_2+\alpha_1-1} p_0^+(|u|) \\
     & \quad + a q^- e^{-l_2|u|} |u|^{\alpha_2+\alpha_1-1}
    \int_0^{\infty} t^{\alpha_1-1}\Bigl(
    1 + \frac{t}{l_1+l_2}
    \Bigr)^{\alpha_2-1}
    \Bigl(
    \log\Bigl(1 + \frac{t}{l_1+l_2}\Bigr) + \log(|u|)
    \Bigr)e^{-|u|t}dt                                                              \\
     & = a e^{-l_2|u|} |u|^{\alpha_2+\alpha_1-1}\Bigl(
    (\partial_{\alpha_2}q^-) p_0^+(|u|)
    + q^-\log(|u|) p_0^-(|u|)
    + q^-p_1^-(|u|)
    \Bigr),
  \end{align*}
  \begin{align*}
    \partial_{l_1}g(u; \theta)
     & = a (\partial_{l_1}q^-)e^{-l_2|u|}
    \int_0^{\infty} v^{\alpha_1-1}\Bigl(
    |u| + \frac{v}{l_1+l_2}
    \Bigr)^{\alpha_2-1}e^{-v}dv                                \\
     & \quad + aq^-e^{-l_2|u|}
    \int_0^{\infty} v^{\alpha_1-1}\Bigl(
    |u| + \frac{v}{l_1+l_2}
    \Bigr)^{\alpha_2-2}\frac{1-\alpha_2}{(l_1+l_2)^2}ve^{-v}dv \\
     & =a e^{-l_2|u|} |u|^{\alpha_1+\alpha_1-1}
    \Bigl((\partial_{l_1}q^-) p_0^-(|u|) + (1-\alpha_2)(l_1+l_2)^{-2}q^- p_3^-(|u|)\Bigr),
  \end{align*}
  and
  \begin{align*}
    \partial_{l_2}g(|u|; \theta)
     & = a (\partial_{l_2}q^-) e^{-l_2|u|}
    \int_0^{\infty} v^{\alpha_1-1}\Bigl(
    |u| + \frac{v}{l_1+l_2}
    \Bigr)^{\alpha_2-1}e^{-v}dv                                                      \\
     & \quad + aq^- \times (-|u|)e^{-l_2|u|}
    \int_0^{\infty} v^{\alpha_1-1}\Bigl(
    |u| + \frac{v}{l_1+l_2}
    \Bigr)^{\alpha_2-1}e^{-v}dv                                                      \\
     & \quad + aq^-e^{-l_2|u|}
    \int_0^{\infty} v^{\alpha_1-1}\Bigl(
    |u| + \frac{v}{l_1+l_2}
    \Bigr)^{\alpha_2-2}\frac{1-\alpha_2}{(l_1+l_2)^2}ve^{-v}dv                       \\
     & = a e^{-l_2|u|} |u|^{\alpha_2+\alpha_1-1}
    ((\partial_{l_2}q^-) p_0^+(|u|) - q^-  |u|p_0^+(|u|))                            \\
     & \quad + (1-\alpha_2) (l_1+l_2)^{-2}a q^-e^{-l_2|u|} |u|^{\alpha_2+\alpha_1-1}
    \int_0^{\infty} t^{\alpha_1}\Bigl(
    1 + \frac{t}{l_1+l_2}
    \Bigr)^{\alpha_2-2}e^{-|u|t}dt                                                   \\
     & =a e^{-l_2|u|} |u|^{\alpha_2+\alpha_1-1}
    \Bigl((\partial_{l_2}q^-) p_0^-(|u|) - q^-  |u|p_0^-(|u|) + (1-\alpha_2)(l_1+l_2)^{-2}q^- p_3^-(|u|)\Bigr),
  \end{align*}
  where
  \begin{align*}
    p_0^-(u) = \int_0^{\infty} t^{\alpha_1-1}\Bigl(
    1 + \frac{t}{l_1+l_2}
    \Bigr)^{\alpha_2-1}e^{-ut}dt,
  \end{align*}
  \begin{align*}
    p_1^-(u) = \int_0^{\infty} t^{\alpha_1-1}\Bigl(
    1 + \frac{t}{l_1+l_2}
    \Bigr)^{\alpha_2-1} \log\Bigl(
    1 + \frac{t}{l_1+l_2}
    \Bigr)e^{-ut}dt,
  \end{align*}
  \begin{align*}
    p_2^-(u) = \int_0^{\infty} t^{\alpha_1-1}\Bigl(
    1 + \frac{t}{l_1+l_2}
    \Bigr)^{\alpha_2-1} \log(t)e^{-ut}dt,
  \end{align*}
  and
  \begin{align*}
    p_3^-(u) =\int_0^{\infty} t^{\alpha_1}\Bigl(
    1 + \frac{t}{l_1+l_2}
    \Bigr)^{\alpha_2-2} e^{-ut}dt.
  \end{align*}

  By Lemma \ref{lem_abelian}, we have
  \begin{gather*}
    p_0^+(u) \sim \Gamma(\alpha_2) u^{-\alpha_2}, \quad
    p_1^+(u) \sim \Gamma(\alpha_2+1) \frac{1}{l_1+l_2}u^{-(\alpha_2+1)} , \\
    p_2^+(u) \sim -\Gamma(\alpha_2) u^{-\alpha_2} \log(u), \quad
    p_3^+(u) \sim \Gamma(\alpha_2+1) u^{-(\alpha_2+1)}
  \end{gather*}
  as $u\to\infty$, and
  \begin{gather*}
    p_0^-(|u|) \sim \Gamma(\alpha_1) |u|^{-\alpha_1}, \quad
    p_1^-(|u|) \sim \Gamma(\alpha_1+1) \frac{1}{l_1+l_2}|u|^{-(\alpha_1+1)} , \\
    p_2^-(|u|) \sim -\Gamma(\alpha_1) |u|^{-\alpha_1} \log(|u|), \quad
    p_3^-(|u|) \sim \Gamma(\alpha_1) |u|^{-(\alpha_1+1)}
  \end{gather*}
  as $u\to-\infty$.

  Suppose $c_1, \ldots, c_5\in\bbR$ and
  \begin{equation}\label{eq_gamma_positivity_prf_1}
    c_1\partial_{a}g(u; \theta^*)
    + c_2\partial_{\alpha_2}g(u; \theta^*)
    + c_3\partial_{\alpha_2}g(u; \theta^*)
    + c_4\partial_{l_1}g(u; \theta^*)
    + c_5\partial_{l_2}g(u; \theta^*) = 0, \quad u\in[-r, r].
  \end{equation}
  Then, by the identity theorem for analytic functions, the equation (\ref{eq_gamma_positivity_prf_1}) holds for all $u\neq 0$.
  Writing down the derivatives calculated above, we observe
  \begin{align}\label{al_linear_conbination_positive}
    0
     & =  c_1 d_{11} p_0^+(u)\nonumber                                        \\
     & \quad + c_2 (d_{21}p_0^+(u)
    + d_{22}\log(u) p_0^+(u)
    + d_{23} p_1^+(u))          \nonumber                                     \\
     & \quad
    + c_3(d_{31}p_0^+(u) + d_{32}\log(u) p_0^+(u) + d_{33}p_2^+(u)) \nonumber \\
     & \quad
    + c_4(d_{41}p_0^+(u) + d_{42} u p_0^+(u) + d_{43} p_3^+(u)) \nonumber     \\
     & \quad
    + c_5(d_{51}p_0^+(u) + d_{52} p_3^+(u)), \quad u>0
  \end{align}
  and
  \begin{align}\label{al_linear_conbination_negative}
    0
     & =  c_1 e_{11} p_0^-(|u|)\nonumber                                              \\
     & \quad + c_2 (e_{21}p_0^-(|u|)
    + e_{22}\log(|u|) p_0^-(|u|)
    + e_{23} p_2^-(|u|))          \nonumber                                           \\
     & \quad
    + c_3(e_{31}p_0^-(|u|) + e_{32}\log(|u|) p_0^-(|u|) + e_{33}p_1^-(|u|)) \nonumber \\
     & \quad
    + c_4(e_{41}p_0^-(|u|) + e_{42} p_3^-(|u|)) \nonumber                             \\
     & \quad
    + c_5(e_{51}p_0^-(|u|) + e_{52} |u| p_0^-(|u|) + e_{53}p_3^-(|u|)), \quad u<0
  \end{align}
  where
  $d_{11}, d_{22}, d_{23}, d_{32}, d_{33}, d_{42},
    e_{11}, e_{22}, e_{23}, e_{32}, e_{33}, e_{52}
    \in \bbR\setminus{\{0\}}$ \\ and
  $d_{21}, d_{31}, d_{41}, d_{43}, d_{51}, d_{52}, e_{21}, e_{31}, e_{41}, e_{42}, e_{51}, e_{53} \in \bbR$.


  First, since the function $u p_0^+(u)$ has the highest order as $u\to\infty$ in (\ref{al_linear_conbination_positive}) and $d_{42}\neq 0$, we have $c_4=0$.
  Similarly, because the function $|u| p_0^-(|u|)$ has the highest order as $u\to-\infty$ in (\ref{al_linear_conbination_negative}) and $e_{52}\neq 0$, we also have $c_5=0$.
  Next, we observe that the functions $\log(u) p_0^+(u)$ and $p_2^+(u)$ have the highest order
  among the remaining functions in (\ref{al_linear_conbination_positive}) as $u\to\infty$.
  Together with the fact that $d_{22}=d_{32}=d_{33}=a^*q^+(\theta^*)$,
  we obtain $c_2=0$.
  Repeating a similar argument for $u<0$, we also get $c_3=0$. (Notice that $e_{22}=e_{23}=e_{33}=a^*q^-(\theta^*)$.)
  Finally, because $p_0^+(u)$ is not constant, we obtain $c_1=0$ as well.
\end{proof}

\begin{proposition}
  The exponential kernel model satisfies the condition [ID].
\end{proposition}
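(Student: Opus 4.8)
The plan is to read off $\theta$ from the shape of $g(\cdot;\theta)$ on each side of the origin, where the bilateral kernel reduces to a single exponential. Suppose $\theta=(a,l_1,l_2)$ and $\theta^*=(a^*,l_1^*,l_2^*)$ lie in $\ol{\Theta}$ and $g(\cdot;\theta)=g(\cdot;\theta^*)$ a.e.\ on $[-r,r]$. For $u>0$ one has $g(u;\theta)=1+a\frac{l_1l_2}{l_1+l_2}e^{-l_2u}$, and for $u<0$ one has $g(u;\theta)=1+a\frac{l_1l_2}{l_1+l_2}e^{-l_1u}$; in particular $g(\cdot;\theta)$ and $g(\cdot;\theta^*)$ are continuous (indeed real-analytic) on $(0,r]$ and on $[-r,0)$, so the a.e.\ identity upgrades to $g(u;\theta)=g(u;\theta^*)$ for every $u\in(0,r]$ and every $u\in[-r,0)$, both being nondegenerate intervals since $r>0$ is fixed.

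First I would use the identity on $(0,r]$. Subtracting $1$ and taking logarithms (the prefactors are positive because $a,l_1,l_2>0$) gives
\[
\log\!\Bigl(a\tfrac{l_1l_2}{l_1+l_2}\Bigr)-l_2u=\log\!\Bigl(a^*\tfrac{l_1^*l_2^*}{l_1^*+l_2^*}\Bigr)-l_2^*u,\qquad u\in(0,r],
\]
an equality of two affine functions of $u$ on an interval; comparing slopes yields $l_2=l_2^*$, and comparing intercepts yields $a\frac{l_1l_2}{l_1+l_2}=a^*\frac{l_1^*l_2^*}{l_1^*+l_2^*}$. Repeating the argument on $[-r,0)$ with $e^{-l_1u}$ in place of $e^{-l_2u}$ gives $l_1=l_1^*$ (and the same relation for the prefactors). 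Finally, substituting $l_1=l_1^*$ and $l_2=l_2^*$ into $a\frac{l_1l_2}{l_1+l_2}=a^*\frac{l_1^*l_2^*}{l_1^*+l_2^*}$ forces $a=a^*$; hence $\theta=\theta^*$ and [ID] holds.

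I do not expect any real obstacle here: unlike the gamma-kernel case, where one must analyze the asymptotics of the bilateral gamma density, the exponential-kernel $q$ has an explicit one-term exponential form on each half-line, so the proof is essentially bookkeeping. The only points requiring a word of care are upgrading the a.e.\ equality to pointwise equality on each side of the origin (handled by continuity of $g(\cdot;\theta)$ away from $0$) and ensuring that $(0,r]$ and $[-r,0)$ are genuine intervals, which is guaranteed by the standing assumption $r>0$.
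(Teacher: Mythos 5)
Your proof is correct, but it takes a genuinely different route from the paper's. The paper disposes of this proposition in one line, by observing that the exponential model is the gamma model with $\alpha_1=\alpha_2=\alpha_1^*=\alpha_2^*=1$ and invoking Proposition \ref{prop_id_ns_gamma}; that proposition in turn rests on extending the a.e.\ identity from $[-r,r]$ to all of $\bbR\setminus\{0\}$ by the identity theorem for analytic functions and then reading off the parameters from the tail asymptotics of the bilateral gamma density as $u\to\pm\infty$. You instead argue entirely locally and elementarily: on each half-line $q$ is a single explicit exponential, so after upgrading the a.e.\ identity to a pointwise one by continuity on $(0,r]$ and $[-r,0)$, you take logarithms and compare slopes and intercepts of two affine functions that agree on a nondegenerate interval, recovering first $l_2$, then $l_1$, then $a$. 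Your argument is self-contained (no identity theorem, no citation of the bilateral-gamma asymptotics) and stays inside the observation window, at the cost of not reusing the gamma-kernel machinery. One cosmetic remark: the paper's displayed formula for $q$ on $u<0$ should read $e^{l_1u}$ (i.e.\ $e^{-l_1|u|}$) rather than $e^{-l_1u}$, and you inherited this sign slip; it is harmless for your argument, since the slope comparison on $[-r,0)$ identifies $l_1$ regardless of the sign convention.
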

\begin{proof}
  This is just a special case of Proposition \ref{prop_id_ns_gamma} (take $\alpha_1=\alpha_2=\alpha_1^*=\alpha_2^*=1$).
\end{proof}

\begin{proposition}
  The exponential kernel model satisfies the condition [ID2].
\end{proposition}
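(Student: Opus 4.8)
The plan is to run the argument of Proposition~\ref{prop_id2_ns_gamma} in this much simpler setting. As recorded there, positive definiteness of $\Gamma=\lambda_1^*\lambda_2^*\int_{|u|\le r}\frac{\pt g(u;\theta^*)^{\otimes 2}}{g(u;\theta^*)}du$ is equivalent, via the continuity of $\pt g(\cdot;\theta^*)$ and the bound $g(\cdot;\theta^*)\ge 1$, to the implication
\[
\bigl(\forall u\in[-r,r]:\ c_1\partial_a g(u;\theta^*)+c_2\partial_{l_1}g(u;\theta^*)+c_3\partial_{l_2}g(u;\theta^*)=0\bigr)\ \Longrightarrow\ c=0
\]
for every $c=(c_1,c_2,c_3)\in\bbR^3$. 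So it suffices to prove this implication, and for the exponential kernel no analytic continuation to $u\to\pm\infty$ is needed, since the relevant functions are already elementary on $[-r,r]$.

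First I would write the three partial derivatives explicitly. On $u>0$ we have $g(u;\theta)=1+a\,\tfrac{l_1l_2}{l_1+l_2}e^{-l_2u}$, hence $\partial_a g$ and $\partial_{l_1}g$ are each a $u$-independent constant times $e^{-l_2u}$, whereas $\partial_{l_2}g=a\bigl(\tfrac{l_1^2}{(l_1+l_2)^2}-\tfrac{l_1l_2}{l_1+l_2}u\bigr)e^{-l_2u}$ is the unique summand carrying a term linear in $u$, with $u$-coefficient $-a\,\tfrac{l_1l_2}{l_1+l_2}$, which is nonzero at $\theta^*$. Symmetrically, using $q(u;l_1,l_2)=q(-u;l_2,l_1)$ so that $g(u;\theta)=1+a\,\tfrac{l_1l_2}{l_1+l_2}e^{-l_1|u|}$ for $u<0$, it is $\partial_{l_1}g$ that is the only summand carrying a term linear in $|u|$, again with nonzero coefficient $-a\,\tfrac{l_1l_2}{l_1+l_2}$ at $\theta^*$.

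Then the proof closes in three short steps. Restricting the hypothesis to $u\in(0,r]$ and dividing by the nowhere-vanishing factor $e^{-l_2^*u}$, the left-hand side becomes an affine function of $u$ vanishing on a nondegenerate interval, hence identically zero; its slope equals $-c_3\,a^*\tfrac{l_1^*l_2^*}{l_1^*+l_2^*}$, forcing $c_3=0$. Restricting instead to $u\in[-r,0)$ and dividing by $e^{-l_1^*|u|}$ gives, by the same reasoning applied now to the $|u|$-slope, $c_2=0$. With $c_2=c_3=0$ in hand, the surviving constant term on either half-line is $c_1\,\tfrac{l_1^*l_2^*}{l_1^*+l_2^*}$, which must also vanish, so $c_1=0$; hence $c=0$.

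I do not expect a genuine obstacle. In contrast to the gamma-kernel case this needs neither the Abelian theorem (Lemma~\ref{lem_abelian}) nor confluent hypergeometric asymptotics, only the elementary fact that $(\alpha+\beta t)e^{-\gamma t}\equiv 0$ on a nondegenerate interval forces $\alpha=\beta=0$. The only point requiring care is bookkeeping: correctly isolating the unique coefficient $c_j$ multiplying the $u$-linear (respectively $|u|$-linear) piece on each half-line, and using the strict positivity of $a^*,l_1^*,l_2^*$ built into the parameter space to conclude $a^*\tfrac{l_1^*l_2^*}{l_1^*+l_2^*}\neq 0$.
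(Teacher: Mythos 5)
Your proof is correct, but it takes a genuinely different route from the paper's. The paper disposes of this proposition in one line: the exponential model is the gamma model with $\alpha_1=\alpha_2=1$, so its $3\times 3$ matrix $\Gamma$ is the principal submatrix of the $5\times 5$ matrix of Proposition \ref{prop_id2_ns_gamma} obtained by deleting the rows and columns corresponding to $\alpha_1,\alpha_2$, and a principal submatrix of a symmetric positive definite matrix is positive definite. That argument is shorter but inherits the full machinery of the gamma-kernel proof (the Abelian theorem, Lemma \ref{lem_abelian}, and the asymptotic comparison of $p_0^{\pm}, p_1^{\pm}, p_2^{\pm}, p_3^{\pm}$ as $u\to\pm\infty$), applied at the boundary-of-interest point $\alpha_1=\alpha_2=1$. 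Your argument is instead self-contained and elementary: you correctly reduce positive definiteness to linear independence of $\partial_a g,\ \partial_{l_1}g,\ \partial_{l_2}g$ on $[-r,r]$ (the same reduction the paper performs at the start of the proof of Proposition \ref{prop_id2_ns_gamma}), compute the three derivatives explicitly on each half-line, and observe that on $u>0$ only $\partial_{l_2}g$ carries a $u$-linear factor in front of $e^{-l_2^*u}$ while on $u<0$ only $\partial_{l_1}g$ carries an $|u|$-linear factor in front of $e^{-l_1^*|u|}$, with nonvanishing coefficients $-a^*l_1^*l_2^*/(l_1^*+l_2^*)$; the identity $(\alpha+\beta t)e^{-\gamma t}\equiv 0$ on an interval forcing $\alpha=\beta=0$ then yields $c_3=0$, $c_2=0$, and finally $c_1=0$ from the constant term. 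I checked the derivative computations and they are right (note in passing that your formula $g(u;\theta)=1+a\,\tfrac{l_1l_2}{l_1+l_2}e^{-l_1|u|}$ for $u<0$ is the correct convolution, consistent with the intended reading of the paper's displayed $q$). What your approach buys is transparency and independence from the hypergeometric/Tauberian analysis; what the paper's buys is brevity and the reuse of an already-proved result.
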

\begin{proof}
  This assertion follows from Proposition \ref{prop_id_ns_gamma} and the fact that the principal submatrix of a symmetric positive definite matrix is also a positive definite matrix.
\end{proof}

\bibliographystyle{spmpsci}      
\bibliography{ts}   

\newpage
\appendix
\section*{Appendix}
\section{Boundedness of the factorial cumulant densities for stationary Hawkes processes with bounded kernel}\label{sec:appendix_a}

In this section, we verify that a stationary univariate Hawkes process with a bounded kernel has bounded factorial cumulant densities up to any finite order. 

For $\psi:\mathbb{R}\to \mathbb{R}$, let $\|\psi\|_1$ and $\|\psi\|_\infty$  is the norm of $L^1(\mathbb{R})$ and $L^{\infty}(\mathbb{R})$, respectively.
Let \(N\) be a stationary univariate Hawkes process on \(\mathbb{R}\) with the baseline intensity $\mu > 0$ and the kernel function $\phi: \mathbb{R}\to\mathbb{R}_{\geq 0}$ satisfying $\|\phi\|_1<1$ and $\|\phi\|_{\infty} < \infty$.
The exponential kernel is such an example:
\(
\phi(t) = \nu \beta e^{-\beta t}1_{(0,\infty)}(t), t\in\mathbb{R},  0<\nu<1,\beta>0.
\)

For $N$, we write \(C_{[n]}\) for the
factorial cumulant measure of order \(n\),
and denote by \(\gamma_{[n]}\) its density with respect to the Lebesgue measure on \(\mathbb{R}^n\), when it exists:
\[
C_{[n]}(dt_1\cdots dt_n) = \gamma_{[n]}(t_1,\ldots,t_n)\,dt_1\cdots dt_n.
\]

\begin{proposition}\label{prop:Hawkes_NSiv}
Let \(N\) be the stationary univariate Hawkes process on $\mathbb{R}$ with a bounded kernel $\phi$. 
Then, for every integer \(n\ge2\), the factorial cumulant measure \(C_{[n]}\) of \(N\) admits a density
\(\gamma_{[n]}\) on \(\mathbb{R}^n\), and \(\gamma_{[n]}\) is bounded.
\end{proposition}
\begin{proof}
The proof is divided into three steps. We also illustrate the proof strategy in Figure \ref{fig:hawkes_proof_vis_fix}.
\paragraph{Step 1: Tree expansion and notation.}
Let \(n\ge2\) and \(\mathcal{T}_n\) be the set of all possible rooted trees in which every internal node that is not connected to any leaves has at least two children and with \(n\) leaves (``family tree'' in Jovanovi\'c et al.~\cite{jovanovic2015cumulants}).
Specializing the calculation in Jovanovi\'c et al.~\cite{jovanovic2015cumulants} to the univariate case, we have
\begin{equation}\label{eq_tree-expansion}
\gamma_{[n]}(t_1,\ldots,t_n)
 = \sum_{T\in\mathcal{T}_n} I_T(t_1,\ldots,t_n),
 \qquad (t_1,\ldots,t_n)\in\mathbb{R}^n,
\end{equation}
where the function \(I_T\) will be defined below.
Note that their definition of the cumulant density agrees with our definition of the factorial cumulant density when all $t_i, \ i=1, \ldots, n$ are mutually distinct.

Before presenting the definition of $I_T$, we first introduce several quantities following \cite{jovanovic2015cumulants}.
Let
\[
R(t)
 = \delta_0(t) + \sum_{m\ge1} \phi^{*m}(t),
 \qquad t\in\mathbb{R},
\]
and
\[
\Psi(t) = R(t) - \delta_0(t)
        = \sum_{m\ge1} \phi^{*m}(t),
\]
where \(\phi^{*m}\) denotes the \(m\)-fold self-convolution of \(\phi\) and
\(\delta_0\) is the Dirac mass at the origin. 
Since we assume $\phi$ is bounded and $\|\phi\|_1 < 1$, we have $\Psi\in  L^1(\mathbb{R})\cap L^{\infty}(\mathbb{R})$ thanks to Lemma \ref{lem_phi_l1_linfty}. 

We will also need some notations concerning the tree.
Fix \(n\ge2\) and \(T\in\mathcal{T}_n\).
Suppose $|T|$ is the total number of all nodes,
$M:=|T| - n$ is the number of all internal nodes (containing the root), 
$\mathcal{S}=\{s_1, \ldots, s_n\}$ is all of the leaf nodes, 
$\mathcal{V}=\{v_1, \ldots, v_M\}$ is all of the internal nodes,
$\mathcal{E} \subset \mathcal{V}\times (\mathcal{V}\sqcup \mathcal{S})$ is all of the edges (notice that each leaf is connected to an internal node), 
and $\mathcal{E}_{\text{int}} = \mathcal{E} \cap ( \mathcal{V}\times  \mathcal{V})$ is all of the internal edges.
We label the internal nodes $v_1, \ldots, v_M$ using a depth–first search, assigning smaller indices to nodes closer to the root. The induced total order is used purely for notation; for nodes that are incomparable in the tree order, their relative order has no structural significance.
For each $i = 1, \ldots, n$, let $m_i \in \{1,\ldots,M\}$ denote the index of the parent of the leaf $s_i$, that is, $v_{m_i}$ is the unique internal node adjacent to $s_i$.
For each internal edge $e \in \mathcal{E}_{\text{int}}$, we write
\(
  e = (v_{m_p(e)}, v_{m_d(e)})
\)
so that $m_p(e), m_d(e) \in \{1,\ldots,M\}$ are the indices of the two internal nodes incident to $e$, and we impose the convention
\(
  m_p(e) < m_d(e)
\)
Similarly, for edges incident to leaves $e\in E\setminus \mathcal{E}_{\text{int}}$, we write $e = (v_{m_p(e)}, s_{i_d(e)})$.

Then, we define $I_T(t_1, \ldots, t_n)$ as 
\begin{gather*}
    I_T(t_1, \ldots, t_n) = 
    \lambda_H
    \int_{\mathbb{R}^M} F_T(u_1, \ldots, u_M) 
    \Bigl\{\prod_{i=1}^n R(t_i - u_{m_i})\Bigr\}du_1\cdots du_M, \\
    F_T(u_1, \ldots, u_M) = \prod_{e\in \mathcal{E}_{\text{int}}}\Psi(u_{m_p(e)} - u_{m_d(e)}), \\
    \lambda_H = \frac{\mu}{1-\|\phi\|_1},
\end{gather*}
for $(t_1, \ldots, t_n)\in\mathbb{R^n}$ and have the relation (\ref{eq_tree-expansion}) by the algorithm in Section III in Jovanovi\'c et al.~\cite{jovanovic2015cumulants}.

Thus, to establish the proposition, it suffices to show that $I_T$ is bounded by a constant not depending on pairwise distinct $t_1, \ldots, t_n$.

\paragraph{Step 2: Decomposition of $I_T$ and identification of free integration variables.}
We observe
\begin{gather}\label{eq_IT_expansion}
    I_T(t_1, \ldots, t_n) = 
    \lambda_H\sum_{\mathcal{I}\subset \{1, \ldots, n\}}
    J_T(t_1, \ldots, t_n; \mathcal{I}), \\
    J_T(t_1, \ldots, t_n; \mathcal{I}) = \int_{\mathbb{R}^M} F_T(u_1, \ldots, u_M) 
    \Bigl\{\prod_{i\in \mathcal{I}} \Psi(t_i - u_{m_i})\Bigr\}
     \Bigl\{\prod_{i\in \mathcal{I}^c} \delta_0(t_i - u_{m_i})\Bigr\}
    du_1\cdots du_M, \nonumber
\end{gather}
where $\mathcal{I}^c = \{1, \ldots, n\} \setminus \mathcal{I}$ for $\mathcal{I}\subset \{1, \ldots, n\}$, by expanding $\prod_{i=1}^n R(t_i - u_{m_i})= \prod_{i=1}^n \Bigl(\delta_0(t_i - u_{m_i}) + \Psi(t_i - u_{m_i})\Bigr)$.
Therefore, our task reduces to proving, for each $\mathcal{I}$,
$J_T(t_1, \ldots, t_n; \mathcal{I})$ is bounded by a constant not depending on $(t_1, \ldots, t_n)\in\mathbb{R}^n$.

Fix $\mathcal{I}\subset\{1,\ldots,n\}$ and let $\mathcal{M}$ be the set of indices of the internal nodes that remain as free integration variables, that is,
\[
    \mathcal{M}
    = \{1,\ldots,M\} \setminus \{m_i : i \in \mathcal{I}^c\}.
\]

To keep track of which factors in the integrand of $J_T$ are of $\Psi$–type, we set
\[
  \mathcal E_\Psi
    = \mathcal E_{\mathrm{int}}
       \cup \{(v_{m_i},s_i) : i \in \mathcal I\}.
\]
Thus $\mathcal E_\Psi$ consists of all internal edges (coming from $F_T$) together with those leaf–edges whose factor in $\prod_{i=1}^n R(t_i-u_{m_i})$ is a
$\Psi$–term (corresponding to $i\in\mathcal I$). The Dirac factors
$\delta_0(t_i-u_{m_i})$ with $i\in\mathcal I^c$ do not contribute any $\Psi$–term and are only used to eliminate integration variables.

After using the Dirac masses $\{\delta_0(t_i-u_{m_i}): i\in\mathcal I^c\}$ to integrate out the variables $u_m$ with $m\notin\mathcal M$, each remaining $\Psi$–factor is associated with some edge $e\in\mathcal E_\Psi$, whose argument is a difference of two node variables (some of $u_k$ may have been replaced by the corresponding times $t_i$ by the Dirac delta). We then split
\[
  \mathcal E_{\mathrm{free}}
    := \{ e\in\mathcal E_\Psi :
          \text{at least one endpoint of $e$ is } v_m
          \text{ with } m\in\mathcal M \},
  \qquad
  \mathcal E_{\mathrm{fixed}}
    := \mathcal E_\Psi \setminus \mathcal E_{\mathrm{free}}.
\]
For each $e\in\mathcal E_{\mathrm{free}}$ we write
\[
  \rho_e(u_{\mathcal M};t_1,\ldots,t_n)
\]
for the corresponding argument of $\Psi$, obtained by replacing all variables
$u_m$ with $m\notin\mathcal M$ by the appropriate times $t_i$ and keeping
$u_m$ with $m\in\mathcal M$ as free variables. In particular,
$\rho_e(\,\cdot\,;t_1,\ldots,t_n)$ genuinely depends on $u_{\mathcal M}$.
For $e\in\mathcal E_{\mathrm{fixed}}$ the argument depends only on
$(t_1,\ldots,t_n)$, and we denote it by $\rho_e(t_1,\ldots,t_n)$.

With this notation, we can rewrite $J_T$ exactly as
\begin{equation}\label{eq_JT_decomposed}
  J_T(t_1,\ldots,t_n;\mathcal I)
  = \int_{\mathbb R^{\#\mathcal M}}
      G_T(u_{\mathcal M};t_1,\ldots,t_n)\,du_{\mathcal M},
\end{equation}
where
\[
  G_T(u_{\mathcal M};t_1,\ldots,t_n)
  =
    \prod_{e\in\mathcal E_{\mathrm{free}}}
      \Psi\bigl(\rho_e(u_{\mathcal M};t_1,\ldots,t_n)\bigr)
    \prod_{e\in\mathcal E_{\mathrm{fixed}}}
      \Psi\bigl(\rho_e(t_1,\ldots,t_n)\bigr).
\]
Here, we have used the fact that all factors associated with edges carry only a Dirac term have already been accounted for when integrating out the corresponding
variables, so that only edges in $\mathcal E_\Psi$ contribute a $\Psi$–factor.
Since $\Psi$ is bounded, we have
\begin{equation}\label{eq_ap_hawkes_what_to_show}
    J_T(t_1,\ldots,t_n;\mathcal I) \leq \|\Psi\|_{\infty}^{\#\mathcal{E}_{\text{fixed}}}
    \int_{\mathbb{R}^{\#\mathcal{M}}} 
    \prod_{e \in \mathcal{E}_{\text{free}}} \Psi(\rho_e(u_{\mathcal M};t_1,\ldots,t_n)) \prod_{k \in \mathcal{M}} du_k.
\end{equation}
from \eqref{eq_JT_decomposed}.

\paragraph{Step 3: Construction of dedicated edges and uniform bound.}
The key idea of the proof lies in the strategy for handling the integral in (\ref{eq_ap_hawkes_what_to_show}).
To integrate out a specific variable $u_k \in \mathcal{M}$, we rely on the $L^1$-norm estimate $\int |\Psi(u)|du = \|\Psi\|_1$, which requires consuming at least one factor of $\Psi$ that involves $u_k$.
However, the integrand contains multiple edges. The central question is: can we assign a unique, dedicated edge to each integration variable $u_k$ to facilitate this process without overlap?

To identify the ``dedicated edge'' for each $u_k, k\in\mathcal{M}$,
we construct an injective mapping $\chi: \mathcal{M} \to \mathcal{E}_{\text{free}}$.
By the definition of $\mathcal{E}_{\text{free}}$, any edge $e$ with a parent $m_p(e) \in \mathcal{M}$ automatically belongs to $\mathcal{E}_{\text{free}}$, regardless of whether its child node is free or fixed.
Consequently, even though the set of edges $\mathcal{E}_{\text{free}}$ may not form a single connected tree (it may be a forest due to the fixed nodes), the local branching property guarantees that for every $k \in \mathcal{M}$, we can select exactly one outgoing edge $e_k \in \mathcal{E}_{\text{free}}$ such that $m_p(e_k) = k$, and then we define the injection as $\chi(k) = e_k$.

We now continue the evaluation of $J_T$ from \eqref{eq_ap_hawkes_what_to_show}.
Let $\mathcal{E}_{\text{ded}} = \chi(\mathcal{M}) \subset \mathcal{E}_{\text{free}}$ be the set of these selected edges, and let $\mathcal{E}_{\text{rem}} = \mathcal{E}_{\text{free}} \setminus \mathcal{E}_{\text{ded}}$ be the remaining free edges.

First, we bound the terms associated with $\mathcal{E}_{\text{rem}}$ by the sup-norm $\|\Psi\|_\infty$.
Then, together with \eqref{eq_ap_hawkes_what_to_show}, we have
\[
     J_T(t_1,\ldots,t_n;\mathcal I) \leq
    \|\Psi\|_\infty^{\#\mathcal{E}_{\text{rem}} + \#\mathcal{E}_{\text{fixed}}}
    \int_{\mathbb{R}^{\#\mathcal{M}}} 
    \prod_{k \in \mathcal{M}} |\Psi(\rho_{\chi(k)}(u_{\mathcal M};t_1,\ldots,t_n))| \prod_{k \in \mathcal{M}} du_k.
\]

Next, we evaluate this integral sequentially from the root to the leaves in $\mathcal{E}$.
Let us integrate with respect to $u_k \in \mathcal{M}$. The term associated with the dedicated edge $\chi(k)$ takes the form $\Psi(u_k - z_k)$, where $z_k$ corresponds to a child node (either a leaf $t_i$ or another internal node).
Crucially, because we proceed from parent to child, at the step where we integrate $u_k$, the variable $z_k$ acts as a constant (either it is a fixed time $t_i$, or a variable deeper in the tree that has not yet been integrated).
By the translation invariance of the Lebesgue measure, we have
\[
    \int_{\mathbb{R}} |\Psi(u_k - z_k)| du_k = \|\Psi\|_1.
\]
Repeating this procedure for all $k \in \mathcal{M}$, we integrate out all variables and obtain the factor $(\|\Psi\|_1)^{\#\mathcal{M}}$.
Combining all estimates, we conclude that \[
    |J_T(t_1, \ldots, t_n; \mathcal{I})| \le \|\Psi\|_\infty^{\#\mathcal{E}_{\text{fixed}} + \#\mathcal{E}_{\text{rem}}} \|\Psi\|_1^{\#\mathcal{M}},
\] so that $\gamma_{[n]}$ is bounded.
\end{proof}
\halflineskip

\begin{lemma}\label{lem_phi_l1_linfty}
Suppose that $\phi\in L^1(\mathbb{R})\cap L^\infty(\mathbb{R})$ with $\|\phi\|_1<1$.
Then the function
\[
\Psi(t) = \sum_{m\ge1} \phi^{*m}(t), \qquad t\in\mathbb{R},
\]
belongs to $L^1(\mathbb{R})\cap L^\infty(\mathbb{R})$.
\end{lemma}

\begin{proof}
Let $a := \|\phi\|_1<1$ and $M := \|\phi\|_\infty<\infty$.
By Young's inequality for convolutions, we have
\[
\|\phi^{*m}\|_1 \le \|\phi\|_1^m = a^m,
\qquad m\ge1.
\]
Hence
\[
\sum_{m=1}^\infty \|\phi^{*m}\|_1
 \le \sum_{m=1}^\infty a^m
 = \frac{a}{1-a}
 < \infty.
\]
Therefore the series $\sum_{m\ge1}\phi^{*m}$ converges in $L^1(\mathbb{R})$, and in particular
$\Psi\in L^1(\mathbb{R})$.

Next we show that $\Psi\in L^\infty(\mathbb{R})$.
We observe
\begin{equation}\label{eq:phi_m_Linf_bound}
\|\phi^{*m}\|_\infty \le M\,a^{m-1},
 \qquad m\ge1.
\end{equation}
using Young's inequality recursively.
Then, we have
\[
\|\Psi\|_\infty 
\le \sum_{m=1}^\infty \|\phi^{*m}\|_\infty
 \le \sum_{m=1}^\infty M\,a^{m-1}
 = \frac{M}{1-a}
 <\infty.
\]
\end{proof}
\halflineskip

\begin{figure}[htbp]
    \centering
    \tikzset{
        internal/.style={circle, draw=black, thick, minimum size=7mm, inner sep=0pt, font=\small},
        leaf/.style={rectangle, draw=black, thick, minimum size=6mm, inner sep=2pt, font=\small},
        fixed/.style={fill=gray!30},
        dedicated/.style={->, >=Stealth, line width=2.5pt, draw=black},
        remaining/.style={->, >=Stealth, thick, draw=black},
        fixededge/.style={->, >=Stealth, thick, draw=gray!80},
        level 1/.style={sibling distance=3.0cm, level distance=1.5cm},
        level 2/.style={sibling distance=1.2cm, level distance=1.5cm},
    }

    \begin{subfigure}[b]{0.32\textwidth}
        \centering
        \begin{tikzpicture}[scale=0.65, transform shape] 
            \node[internal] (u1) {$u_1$}
                child {node[internal] (u2) {$u_2$}
                    child {node[leaf] (t1) {$t_1$}}
                    child {node[leaf] (t2) {$t_2$}}
                }
                child {node[internal] (u3) {$u_3$}
                    child {node[leaf] (t3) {$t_3$}}
                    child {node[leaf] (t4) {$t_4$}}
                };
        \end{tikzpicture}
        \caption{an example of $T \in \mathcal{T}_4$}
        \label{fig:tree_setup_fix}
    \end{subfigure}
    \hfill
    \begin{subfigure}[b]{0.32\textwidth}
        \centering
        \begin{tikzpicture}[scale=0.65, transform shape]
            \node[internal] (u1) {$u_1$}
                child {node[internal] (u2) {$u_2$}
                    child {node[leaf] (t1) {$t_1$}}
                    child {node[leaf] (t2) {$t_2$}}
                }
                child {node[internal, fixed] (u3) {$u_3$}
                    child {node[leaf, fixed] (t3) {$t_3$}}
                    child {node[leaf] (t4) {$t_4$}}
                };
            
            \node[draw=none, anchor=north, align=left, font=\scriptsize, yshift=-1.0cm] at (u1) {
                \textbf{Legend:}\\
                \tikz[baseline=-0.5ex]\node[internal, scale=0.6]{}; Free ($\mathcal{M}$)\\
                \tikz[baseline=-0.5ex]\node[internal, fixed, scale=0.6]{}; Fixed ($\mathcal{M}^c$)
            };
        \end{tikzpicture}
        \caption{Fix the variables by $\mathcal{I}$}
        \label{fig:tree_decomp_fix}
    \end{subfigure}
    \hfill
    \begin{subfigure}[b]{0.32\textwidth}
        \centering
        \begin{tikzpicture}[scale=0.65, transform shape]
            \node[internal] (u1) {$u_1$}
                child {node[internal] (u2) {$u_2$}
                    child {node[leaf] (t1) {$t_1$}}
                    child {node[leaf] (t2) {$t_2$}}
                }
                child {node[internal, fixed] (u3) {$u_3$}
                    child {node[leaf] (t4) {$t_4$}}
                };

            \draw[dedicated] (u1) -- (u2);
            \draw[dedicated] (u2) -- (t1);
            \draw[remaining] (u1) -- (u3);
            \draw[remaining] (u2) -- (t2);
            \draw[fixededge] (u3) -- (t4);
            
            \node[draw=none, anchor=north, align=left, font=\scriptsize, yshift=-1.0cm] at (u1) {
                \textbf{Edge Types:}\\
                \tikz[baseline=-0.5ex]\draw[dedicated] (0,0) -- (0.5,0); Dedicated\\
                \tikz[baseline=-0.5ex]\draw[remaining] (0,0) -- (0.5,0); Remaining\\
                \tikz[baseline=-0.5ex]\draw[fixededge] (0,0) -- (0.5,0); Fixed
            };
        \end{tikzpicture}
        \caption{Integration Strategy $\chi$}
        \label{fig:tree_proof_fix}
    \end{subfigure}
    
    \caption{Visual illustration of the proof strategy for $n=4$ with the tree.
    (To keep the figure readable, we label internal nodes by $(u_k)$ and leaves 
    by $(t_i)$ respectively to $(v_k)$ and $(s_i)$.)
    In this case, $T\in\mathcal{T}_4$ and the choice $\mathcal{I}=\{1,2,4\}$. 
    The internal nodes are labeled by the integration variables $u_1, u_2, u_3$, and the leaves by the times $t_1, \ldots, t_4$. 
    For this choice of $\mathcal{I}$ we have $\mathcal{M}=\{1,2\}$ so that $u_3$ is fixed by the constraint $\delta_0(t_3-u_3)$, and the edge sets are
    $\mathcal{E}=\{(u_1,u_2),(u_1,u_3),(u_2,t_1),(u_2,t_2),(u_3,t_3),(u_3,t_4)\}$,
    $\mathcal{E}_{\mathrm{int}}=\{(u_1,u_2),(u_1,u_3)\}$,
    $\mathcal{E}_\Psi=\mathcal{E}_{\mathrm{int}}\cup\{(u_2,t_1),(u_2,t_2),(u_3,t_4)\}$,
    $\mathcal{E}_{\mathrm{free}}=\{(u_1,u_2),(u_1,u_3),(u_2,t_1),(u_2,t_2)\}$,
    and $\mathcal{E}_{\mathrm{fixed}}=\{(u_3,t_4)\}$. In panel~(c) we choose the injection $\chi:\mathcal{M}\to\mathcal{E}_{\mathrm{free}}$ given by $\chi(1)=(u_1,u_2)$ and $\chi(2)=(u_2,t_1)$; the corresponding dedicated edges $\mathcal{E}_{\mathrm{ded}}=\chi(\mathcal{M})$ are drawn as thick arrows, the remaining free edges $\mathcal{E}_{\mathrm{rem}}=\mathcal{E}_{\mathrm{free}}\setminus\mathcal{E}_{\mathrm{ded}}$ as thin solid arrows, and the fixed edges as gray arrows.}

    \label{fig:hawkes_proof_vis_fix}
\end{figure}

\section{The estimated parameters of the models in Section \ref{sec_realdata}}

\begin{table}[htbp]
      \centering
      \begin{tabular}{rrrrrrr}
            \toprule
            code1 & code2 & $a$  & $\alpha_1$ & $\alpha_2$ & $l_1$ & $l_2$ \\
            \midrule
            7201  & 7203  & 3.15 & 0.281      & 0.291      & 0.569 & 0.712 \\
            8306  & 8411  & 3.89 & 0.352      & 0.28       & 1.48  & 0.627 \\
            8031  & 8058  & 4.63 & 0.258      & 0.3        & 1.15  & 1.69  \\
            \bottomrule
      \end{tabular}
      \caption{NBNSP-G, buy orders}
      \label{table_est_params_real_start}
\end{table}

\begin{table}[htbp]
      \centering
      \begin{tabular}{rrrrr}
            \toprule
            code1 & code2 & $a$  & $l_1$ & $l_2$ \\
            \midrule
            7201  & 7203  & 2.09 & 5.37  & 5.65  \\
            8306  & 8411  & 2.71 & 6.91  & 6.55  \\
            8031  & 8058  & 3.41 & 10.8  & 10    \\
            \bottomrule
      \end{tabular}
      \caption{NBNSP-E, buy orders}
\end{table}

\begin{table}[htbp]
      \centering
      \begin{tabular}{rrrrrrrrrr}
            \toprule
            code1 & code2 & $\mu_1$ & $\mu_2$ & $\alpha_{11}$ & $\alpha_{12}$ & $\alpha_{21}$ & $\alpha_{22}$ & $\beta_1$ & $\beta_2$ \\
            \midrule
            7201  & 7203  & 0.161   & 0.124   & 27            & 2.05          & 1.46          & 15.4          & 80.7      & 55.3      \\
            8306  & 8411  & 0.159   & 0.0714  & 25.9          & 1.34          & 0.409         & 23.1          & 89.1      & 95.7      \\
            8031  & 8058  & 0.0566  & 0.095   & 35.1          & 2.33          & 3.68          & 27.5          & 127       & 99.1      \\
            \bottomrule
      \end{tabular}
      \caption{BHP-E, buy orders}
\end{table}

\begin{table}[htbp]
      \centering
      \begin{tabular}{rrrrrrr}
            \toprule
            code1 & code2 & $a$  & $\alpha_1$ & $\alpha_2$ & $l_1$ & $l_2$ \\
            \midrule
            7201  & 7203  & 4.42 & 0.258      & 0.309      & 0.234 & 0.842 \\
            8306  & 8411  & 4.13 & 0.242      & 0.313      & 0.481 & 0.829 \\
            8031  & 8058  & 3.66 & 0.309      & 0.269      & 2.54  & 1.74  \\
            \bottomrule
      \end{tabular}
      \caption{NBNSP-G, sell orders}
\end{table}

\begin{table}[htbp]
      \centering
      \begin{tabular}{rrrrr}
            \toprule
            code1 & code2 & $a$  & $l_1$ & $l_2$ \\
            \midrule
            7201  & 7203  & 2.79 & 4.11  & 4.73  \\
            8306  & 8411  & 2.65 & 7.26  & 5.99  \\
            8031  & 8058  & 2.71 & 13.7  & 17.1  \\
            \bottomrule
      \end{tabular}
      \caption{NBNSP-E, sell orders}
\end{table}

\begin{table}[htbp]
      \centering
      \begin{tabular}{rrrrrrrrrr}
            \toprule
            code1 & code2 & $\mu_1$ & $\mu_2$ & $\alpha_{11}$ & $\alpha_{12}$ & $\alpha_{21}$ & $\alpha_{22}$ & $\beta_1$ & $\beta_2$ \\
            \midrule
            7201  & 7203  & 0.156   & 0.118   & 25.7          & 2.57          & 2.3           & 20.3          & 81.9      & 72.6      \\
            8306  & 8411  & 0.156   & 0.0799  & 25.9          & 1.67          & 1.47          & 17.2          & 81        & 81.6      \\
            8031  & 8058  & 0.0613  & 0.111   & 27.3          & 1.53          & 3.8           & 31.2          & 88.8      & 126       \\
            \bottomrule
      \end{tabular}
      \caption{BHP-E, sell orders}
      \label{table_est_params_real_end}
\end{table}


\end{document}